\newif\ifdraft
\newcommand{\deleted}[1]
{}
\newcommand{\modified}
{\color{blue}}
\newcommand{\ph}{\varphi}
\newcommand{\eps}{\epsilon}
\newcommand{\ulim}{\varlimsup}
\newcommand{\EE}{\mathbb{E}}
\newcommand{\NN}{\mathbb{N}}
\newcommand{\ZZ}{\mathbb{Z}}
\newcommand{\RR}{\mathbb{R}}
\newcommand{\BBB}{\mathcal{B}}
\newcommand{\CCC}{\mathcal{C}}
\newcommand{\DDD}{\mathcal{D}}
\newcommand{\LLL}{\mathcal{L}}
\newcommand{\FFF}{\mathcal{F}}
\newcommand{\EEE}{\mathcal{E}}
\newcommand{\SSS}{\mathcal{S}}
\newcommand{\RRR}{\mathcal{R}}
\newcommand{\PPP}{\mathcal{P}}
\newcommand{\GGG}{\mathcal{G}}
\newcommand{\one}{\mathbf{1}}
\newcommand{\Ws}{W^\mathrm{s}}
\newcommand{\Wu}{W^\mathrm{u}}
\newcommand{\Wsl}{\Ws_\mathrm{loc}}
\newcommand{\Wul}{\Wu_\mathrm{loc}}
\newcommand{\ms}{m^\mathrm{s}}
\newcommand{\mmu}{m^\mathrm{u}}
\newcommand{\xminus}{x.}
\newcommand{\uQ}{\overline{Q}}
\newcommand{\lQ}{\underline{Q}}
\newtheorem{theorem}{Theorem}[section]
\newtheorem{lemma}[theorem]{Lemma}
\newtheorem{proposition}[theorem]{Proposition}
\newtheorem{corollary}[theorem]{Corollary}
\newtheorem{thma}{Theorem}
\newtheorem*{thma*}{Theorem}
\theoremstyle{remark}
\newtheorem{remark}[theorem]{Remark}
\theoremstyle{definition}
\newtheorem{definition}[theorem]{Definition}
\numberwithin{equation}{section}
\begin{document}
	
	\title[Equilibrium measures for shift spaces]{Equilibrium measures for two-sided shift spaces via dimension theory}
	\author{Vaughn Climenhaga}
	\address{Dept.\ of Mathematics, University of Houston, Houston, TX 77204}
	\email{climenha@math.uh.edu}
	\author{Jason Day}
	\address{Dept.\ of Mathematics, University of Houston, Houston, TX 77204}
	\email{jjday@uh.edu}
	\date{\today}
	\thanks{The authors were partially supported by NSF grants DMS-1554794 and DMS-2154378. VC was partially supported by a Simons Foundation Fellowship.}

\begin{abstract}
Given a two-sided shift space on a finite alphabet and a continuous potential function, we give conditions under which an equilibrium measure can be described using a construction analogous to Hausdorff measure that goes back to the work of Bowen. This construction was previously applied to smooth uniformly and partially hyperbolic systems by the first author, Pesin, and Zelerowicz. 
Our results here apply to all subshifts of finite type and H\"older continuous potentials, but extend beyond this setting, and we also apply them to shift spaces with synchronizing words.
\end{abstract}
	
\maketitle

\ifdraft
\thispagestyle{firstpage}
\pagestyle{fancy}
\fi

\section{Introduction and main results}

\subsection{Background}

In the study of hyperbolic dynamical systems, thermodynamic formalism allows us to identify certain \emph{equilibrium measures} whose ergodic and statistical properties provide insight into the underlying dynamics.   Given a compact metric space $X$, a continuous map $f\colon X\to X$, and a continuous \emph{potential} function $\ph\colon X\to \RR$, an $f$-invariant Borel probability measure on $X$ is an equilibrium measure if it maximizes $h_\mu(f) + \int \ph\,d\mu$, where $h_\mu(f)$ is the measure-theoretic entropy.

\begin{wrapfigure}{r}{.36\textwidth}
\captionsetup{justification=raggedright,margin=12pt}
\hfill
\includegraphics[width=.32\textwidth]{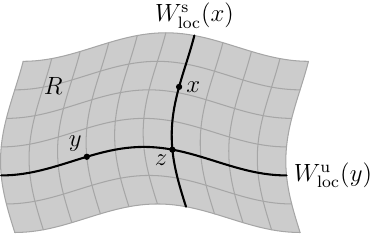}
\caption{A rectangle.}
\label{fig:rectangle}
\end{wrapfigure}

If $X$ is a locally maximal hyperbolic set for a diffeomorphism $f$, then given any two nearby points $x,y\in X$, their local stable and unstable manifolds intersect in a unique point $z\in \Wsl(x) \cap \Wul(y)$. A set $R\subset X$ is a \emph{rectangle} if for all $x,y\in R$, this point $z$ is defined and lies in $R$, as in Figure \ref{fig:rectangle} (in general $R$ may have empty interior). This provides a \emph{product structure} on $R$. This paper is concerned with the behavior of equilibrium measures with respect to this product structure.

With $(X,f)$ as in the previous paragraph, and $\ph \colon X\to \RR$ H\"older continuous, it is known that every transitive component of $X$ has a unique equilibrium measure $\mu$, and that there are \emph{leaf measures} $\ms_x, \mmu_x$ such that on every rectangle, $\mu$ is equivalent to $\ms_x\times \mmu_x$.

It is possible to describe these leaf measures as analogues of Hausdorff measure, where the ``refining'' in the definition of Hausdorff measure is done dynamically rather than geometrically. The idea of treating topological entropy and pressure as analogues of Hausdorff dimension goes back to Bowen \cite{rB73} and to Pesin and Pitskel' \cite{PP84}. The idea of constructing leaf measures of the measure of maximal entropy via a Hausdorff measure construction first appeared in \cite{uH89,bH89}; the general theory was developed by the first author, Pesin, and Zelerowicz \cite{CPZ,CPZ2,direct-srb}.
	
In this paper, we take some steps toward the setting of non-uniform hyperbolicity, or hyperbolicity with singularities, by considering two-sided shift spaces that are not necessarily subshifts of finite type. We give general conditions under which this dimension theoretic construction of leaf measures can still be used to construct an equilibrium measure with local product structure. Using these conditions, we obtain the following application (see \S\S\ref{sec:setting}--\ref{sec:classes} for precise definitions).

\begin{thma}\label{thm:spec}
Let $X$ be a two-sided shift space on a finite alphabet with the specification property, and let $\ph\colon X\to\RR$ be a continuous potential function with the Walters property. Then the unique equilibrium measure $\mu$ for $(X,\ph)$ has local product structure in the sense of \S\ref{sec:prod} below.
\end{thma}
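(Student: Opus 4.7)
The plan is to apply the abstract criteria for local product structure developed in the earlier sections of the paper to the concrete setting of a specification shift with a Walters potential. The existence and uniqueness of the equilibrium measure $\mu$ for $(X,\ph)$ under these hypotheses is classical (Bowen--Walters), so the real content of the theorem is the local product formula $\mu \sim \ms_x \times \mmu_x$ on rectangles, together with the correct construction of the leaf measures $\ms_x, \mmu_x$ as dimension-theoretic (Carath\'eodory--Bowen) objects on local stable and unstable cylinders. My strategy is therefore to verify the abstract hypotheses from \S\S\ref{sec:setting}--\ref{sec:prod}, and then invoke the general theorem.

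First I would fix the local picture: for a two-sided shift, the natural local stable set through $x$ is the set of $y \in X$ with $y_i = x_i$ for all $i \ge 0$ (and analogously for the unstable), and a rectangle is the image of a bracket $[x,y]$ that splits past and future at position $0$. In a general specification shift this bracket is not literally defined on all pairs, so one first uses the specification constant $N$ to glue $y|_{i<0}$ to $x|_{i \ge 0}$ via a uniformly bounded gap word of length at most $N$. I would record that the Walters property makes the cost of this gap uniformly bounded in terms of an additive constant on Birkhoff sums $S_n\ph$, and that this is the only place where non-Markov behavior enters.

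Next I would carry out the Carath\'eodory construction of $\ms_x$ and $\mmu_x$ on local stable/unstable cylinders, using covers by sub-cylinders weighted by $e^{S_n \ph}$. The Walters property gives the bounded-distortion estimates on $S_n\ph$ over cylinders of arbitrary length that are needed for subadditivity and convergence of the construction, and specification gives the lower bounds on partition sums $\sum e^{S_n\ph(y)}$ on local stable and unstable sets that one needs to see that the resulting $\ms_x, \mmu_x$ are nontrivial, finite, and satisfy a leaf-wise Gibbs-type estimate. At this point one has candidate leaf measures satisfying the abstract hypotheses listed in \S\ref{sec:prod}.

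Finally, I would push $\ms_x \times \mmu_x$ forward by the (approximate) bracket to each rectangle and compare with $\mu$. Because $\mu$ is itself a Bowen--Gibbs measure on cylinders under specification plus Walters, and because the product of the leaf measures satisfies the same cylinder estimate up to uniform multiplicative constants, equivalence of the two on each rectangle follows, which is exactly local product structure. The main obstacle I anticipate is bookkeeping in the previous paragraph: unlike in a subshift of finite type, where $[x,y]$ is literally a point of $X$, the specification gap forces one to keep track of both the fixed gap length $N$ and the Walters variation gauge simultaneously while verifying the abstract hypotheses, and to show that neither source of error accumulates as cylinders are refined. Once this uniformity is established, the general product-structure theorem from \S\ref{sec:prod} applies verbatim.
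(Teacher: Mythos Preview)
Your overall plan---verify the abstract hypotheses on leaf measures and then invoke the general product-structure machinery---is the right shape, but there is a genuine missing idea in how you propose to get rectangles. You write that in a specification shift the bracket $[x,y]$ is not literally defined, so you will glue $y|_{i<0}$ to $x|_{i\ge 0}$ via a bounded gap word from specification. This ``approximate bracket'' does not produce a rectangle in the sense of \S\ref{sec:prod}: the gluing point is not unique, the resulting map is not a bijection from $\Wu_R(q)\times\Ws_R(q)$ onto any set, and Definition~\ref{def:lps} requires genuine rectangles on which $[x,y]\in R$ literally. The leaf-measure construction in \S\ref{sec:build-leafs}, the holonomy estimates in \S\ref{sec:holonomies}, and the product measure $\lambda_R$ in \S\ref{sec:product-measure} all rely on this exact bracket structure; an approximate version with floating gap words would not let you apply any of them.

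The correct replacement is a classical fact you do not mention: every shift with specification has a \emph{synchronizing word} $v$ (Bertrand \cite{aB88}), and for synchronizing $v$ the cylinder $[v]^+$ is an honest open rectangle---the bracket is literally defined there with no gap. The paper's route is then to check the pressure-gap hypothesis $P(Y_v,\ph)<P(X,\ph)$ of Theorem~\ref{thm:sync} (this follows because the unique equilibrium measure is fully supported, hence charges $[v]^+$, hence gives zero mass to $Y_v$; see Remark~\ref{rmk:spec}), after which Theorem~\ref{thm:sync} supplies positivity and finiteness of the leaf measures on $[v]^+$, integrability of the return time, and the identification of $\mu$ with the push-forward of the product measure. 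Your bookkeeping worry about ``gap length $N$ and Walters gauge accumulating'' disappears entirely once you work inside $[v]^+$: there is no gap to track.
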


Theorem \ref{thm:spec} follows from Theorem \ref{thm:sync} in \S\ref{sec:synchronizing-words} (see Remark \ref{rmk:spec}), which applies to the broader class of shift spaces with synchronizing words satisfying a certain summability condition \eqref{eqn:sync-assumption}. In a forthcoming paper \cite{CD-billiards}, we will explore applications of our general results to the measure of maximal entropy for dispersing billiards that was recently studied by Baladi and Demers \cite{BD20}.

\begin{remark}\label{rmk:known}
The conclusion of Theorem \ref{thm:spec} can be deduced from results in the literature when $X$ is a mixing subshift of finite type and $\ph$ is H\"older continuous \cite{Bow08,nH94,Lep} or at least has the (weaker) Walters property \cite{pW78}, and also when $X$ has specification and $\ph$ is H\"older continuous \cite{vC18}. The novelty here is the ability to simultaneously weaken SFT to specification, and H\"older to Walters, as well as to provide a framework for going beyond specification (see \S\ref{sec:synchronizing-words}). Uniqueness was already known in the setting of Theorem \ref{thm:spec} \cite{rB745}, but not local product structure. 

It is also worth mentioning \cite{BO23}, which establishes a certain local product structure property for equilibrium measures of non-uniformly hyperbolic diffeomorphisms with potentials satisfying a ``Grassmann--H\"older'' continuity condition.
\end{remark}

The main general results of this paper are the following.
\begin{itemize}
\item \S\ref{sec:build-leafs}, \eqref{eqn:shift-mxu} and \eqref{eqn:shift-mxs}: Construction of the leaf measures $\mmu_x$ and $\ms_x$.
\item \S\ref{sec:finite}, Theorem \ref{thm:finite}: 
Conditions for leaf measures to be positive and finite.
\item \S\ref{sec:dynamics}, Theorem \ref{thm:dynamics}: 
Scaling properties of leaf measures under $\sigma^{\pm 1}$.
\item \S\ref{sec:holonomies}, Theorem \ref{thm:shift-cts}: 
Scaling properties of leaf measures under holonomies (sliding between stable leaves along unstable leaves, and vice versa).\footnote{These require certain conditions on a family of rectangles, which are satisfied
if all rectangles in the family are open and the potential has the Walters property (Proposition \ref{prop:open-hol}), or for an arbitrary family of rectangles if the potential is constant (Proposition \ref{prop:zero-hol}).}
\item \S\ref{sec:product-measure}, Theorems \ref{thm:q-indep}, \ref{thm:return-map}, and \ref{thm:push-product}: 
Construction of a product measure $\lambda$, an invariant measure $\mu$ with local product structure, and conditions for $\mu$ to give an equilibrium measure.
\item \S\ref{sec:gibbs}, Theorem \ref{thm:gibbs}: 
Gibbs-type estimates for $\ms_x, \mmu_x, \lambda, \mu$.
\end{itemize}

\subsection*{Acknowledgments} We are grateful to the anonymous referee for a careful reading and for a number of helpful comments that improved the paper, especially in Theorem \ref{thm:sync} and its proof.

\subsection{Description of setting}\label{sec:setting}

Let $A$ be a finite set (the \emph{alphabet}), and equip the set $A^\ZZ$ of all bi-infinite sequences over $A$ with the metric $d(x,y) = 2^{-\min \{|n| : x_n \neq y_n\}}$. The \emph{shift map} $\sigma \colon A^\ZZ\to A^\ZZ$ is defined by $(\sigma x)_n = x_{n+1}$. A \emph{(two-sided) shift space} is a closed $\sigma$-invariant set $X\subset A^\ZZ$. Given such a shift space $X$, we will study the thermodynamic formalism of the homeomorphism $\sigma \colon X\to X$ equipped with a continuous \emph{potential function} $\ph\colon X\to \RR$. To describe this, we first need some more terminology and notation.

A \emph{word} over $A$ is a finite string of symbols $w\in A^n$ for some $n$; we write $|w| = n$ for the \emph{length} of $w$. (The case $n=0$ gives the \emph{empty word}.)
Given $x\in A^\ZZ$ and $i,j\in \ZZ$, we write $x_{[i,j)}$ for the word $x_i x_{i+1} \cdots x_{j-1}$, and similarly for $(i,j]$, $(i,j)$, and $[i,j]$.
Given a shift space $X$ and a word $w\in A^n$, the \emph{(forward) cylinder} of $w$ is
\[
[w]^+ := \{x\in X : x_{[0,n)} = w\},
\]
and the \emph{language} of $X$ is
\[
\LLL := \bigcup_{n=0}^\infty \LLL_n,
\quad\text{where } \LLL_n := \{w\in A^n : [w]^+ \neq \emptyset \}.
\]
Given a potential function $\ph\colon X\to \RR$, we assign a weight to each word $w\in \LLL$ by
\begin{equation}\label{eqn:Phi}
\Phi(w) := \sup_{x\in [w]^+} S_n\ph(x),
\text{ where } S_n\ph(x) := \sum_{k=0}^{n-1} \ph(\sigma^k x).
\end{equation}
The \emph{$n$th partition sum} associated to $(X,\sigma,\ph)$ is
\begin{equation}\label{eqn:Lambda}
\Lambda_n := \sum_{w\in \LLL_n} e^{\Phi(w)}.
\end{equation}
Given $m,n\in \NN$ one quickly sees that
\begin{equation}\label{eqn:sub-mult}
\Lambda_{m+n} = \sum_{u\in \LLL_m} \sum_{\substack{v\in \LLL_n \\ uv\in \LLL_{m+n}}} e^{\Phi(uv)}
\leq \sum_{u\in \LLL_m} \sum_{\substack{v\in \LLL_n \\ uv\in \LLL_{m+n}}} e^{\Phi(u) + \Phi(v)}
\leq \Lambda_m \Lambda_n,
\end{equation}
and so Fekete's lemma guarantees that the following limit exists:
\begin{equation}\label{eqn:P}
P := \lim_{n\to\infty} \frac 1n \log \Lambda_n.
\end{equation}
The number $P = P(X,\sigma,\ph)$ is the \emph{topological pressure}. 
Writing $M_\sigma(X)$ for the space of $\sigma$-invariant Borel probability measures on $X$, and $h\colon M_\sigma(X) \to [0,\infty)$ for the measure-theoretic entropy function, the variational principle \cite[Theorem 9.10]{pW82} states that
\[
P = \sup_{\mu \in M_\sigma(X)} \Big( h(\mu) + \int \ph\,d\mu\Big).
\]
A measure achieving this supremum is an \emph{equilibrium measure}. There is always at least one equilibrium measure because the entropy function is upper semi-continuous. 

\subsection{Local product structure}\label{sec:prod}

We will be concerned with the structure and properties of equilibrium measures, and in particular with their product structure in terms of stable and unstable sets.

Given a shift space $X$, we say that $R\subset X$ has \emph{product structure} (or is a \emph{rectangle}) if for every $x,y\in R$, the \emph{bracket} $[x,y]$ given by
\begin{equation}\label{eqn:lps}
	[x,y]_n := \begin{cases} x_n & n \leq 0, \\ y_n & n\geq 0 \end{cases}
\end{equation}
is defined (which requires $x_0 = y_0$) and lies in $R$. In this case, we fix $q\in R$ and consider the sets
\begin{equation}\label{eqn:WRq}
\begin{aligned}
\Wu_R(q) &:= \{x\in R : x_n = q_n \text{ for all } n\leq 0 \}, \\
\Ws_R(q) &:= \{x\in R : x_n = q_n \text{ for all } n\geq 0 \},
\end{aligned}
\end{equation}
and then consider the bijection
\begin{equation}\label{eqn:iota}
\begin{aligned}
\iota_{q} \colon \Wu_R(q) \times \Ws_{R}(q) &\to R, \\
(x,y) &\mapsto [y,x].
\end{aligned}
\end{equation}
We say that a measure $m$ on $R$ is a \emph{product measure} if $m = (\iota_q)_* (\mmu_q \times \ms_q)$ for some measures $\mmu_q$ on $\Wu_R(q)$ and $\ms_q$ on $\Ws_R(q)$.

\begin{definition}\label{def:lps}
A probability measure $\mu$ on $X$ has \emph{local product structure} if there exist a sequence of product measures $m_n$ on rectangles $R_n$ such that we have
\begin{itemize}
\item $\mu|_{R_n} \ll m_n$ for every $n\in \NN$; and
\item $\mu(\bigcup_{n\in\NN} R_n) = 1$.
\end{itemize}
\end{definition}

\subsection{Classes of shift spaces and potentials}\label{sec:classes}

To go beyond mere existence and deduce uniqueness, local product structure, or other stronger properties for equilibrium measures, one needs to know more about the shift space $X$ and the potential $\ph$. We gather some basic definitions and facts here. 
\begin{itemize}[itemsep=.5ex]
\item The \emph{$n$th variation} of $\ph$ is $V_n(\ph) := \sup \{ |\ph(x) - \ph(y)| : x_{(-n,n)} = y_{(-n,n)} \}$.
\item Every continuous $\ph\colon X\to\RR$ is uniformly continuous: $V_n(\ph)\to 0$ as $n\to\infty$.
\item $\ph$ is \emph{H\"older continuous} if $V_n(\ph)$ decays exponentially fast: there are $C,\alpha>0$ such that $V_n(\ph) \leq C e^{-\alpha n}$ for all $n\in\NN$.
\item $\ph$ is \emph{Dini continuous} (or has \emph{summable variations}) if $\sum_n V_n(\ph)<\infty$.
\item $\ph$ has the \emph{Bowen property} if there is $C>0$ such that
for every $n\in \NN$ and every $x,y\in X$ with $x_{[0,n)} = y_{[0,n)}$, we have $|S_n\ph(x) - S_n\ph(y)| \leq C$.
\item $\ph$ has the \emph{Walters property} if for every $\eps>0$, there is $k\in\NN$ such that for all $n\in \NN$ and $x,y\in X$ with $x_{[-k,n+k)} = y_{[-k,n+k)}$, we have $|S_n\ph(x) - S_n\ph(y)| \leq \eps$.
\end{itemize}
These properties have the following relationships:
\[
\text{H\"older}
\quad\Rightarrow\quad
\text{Dini}
\quad\Rightarrow\quad
\text{Walters}
\quad\Rightarrow\quad
\text{Bowen}.
\]
The Bowen property has the following immediate consequence:
\begin{equation}\label{eqn:almost-add}
\Phi(v) + \Phi(w) - C \leq \Phi(vw) \leq \Phi(v) + \Phi(w)
\text{ for all } v,w\in \LLL \text{ with } vw\in \LLL.
\end{equation}
A shift space $X\subset A^\ZZ$ is a \emph{subshift of finite type} (SFT) if
there is a finite set of \emph{forbidden words} $\FFF \subset A^* := \bigcup_{n=0}^\infty A^n$ such that
\begin{equation}\label{eqn:SFT}
X = \{ x\in A^\ZZ : x_{[i,j]} \notin \FFF \text{ for all } i,j\in \ZZ \}.
\end{equation}
Every topologically mixing SFT has the following \emph{specification property}: there is $t\in\NN$ such that for all $v,w\in \LLL$, there is $u\in \LLL_t$ such that $vuw\in \LLL$. As mentioned in Remark \ref{rmk:known}, the following facts were already known.
\begin{itemize}
\item If $X$ has specification and $\ph$ has the Bowen property, then there is a unique equilibrium measure \cite{rB745}, and it has the K property \cite{fL77,bC22}.
\item If $X$ is a mixing SFT and $\ph$ has the Walters property, then the unique equilibrium measure has local product structure and the Bernoulli property; see \cite{Bow08,nH94,Lep} for H\"older continuous $\ph$, and \cite{pW78} for the extension to the Walters property.
\item If $X$ has specification and $\ph$ is H\"older continuous, then the unique equilibrium measure has local product structure and the Bernoulli property \cite{vC18}.
\end{itemize}
Theorem \ref{thm:spec} extends the ``local product structure'' result in the last item to the case when $\ph$ has the Walters property. We do not address the question of whether the unique equilibrium measure is Bernoulli in this setting.

In the classical results above, an important role is played by the fact that the partition sums in \eqref{eqn:Lambda} admit \emph{uniform counting bounds} $e^{nP} \leq \Lambda_n \leq Q e^{nP}$, where $Q<\infty$ is independent of $n$, as well as by the existence of ``large'' sets with product structure. An important feature of our approach is that our general results rely only on these rather flexible assumptions, rather than more restrictive assumptions on the structure of $X$.

\subsection{Construction of the leaf measures}\label{sec:build-leafs}

The definition of topological pressure in \eqref{eqn:P} gave it as an exponential growth rate, analogous to box (capacity) dimension. In dimension theory one can also study Hausdorff dimension, which is a critical value rather than a growth rate; more precisely, one defines a one-parameter family of Hausdorff measures, whose total weight jumps from $\infty$ to $0$ at a particular value of the parameter. This value is the Hausdorff dimension; Pesin and Pitskel' \cite{PP84} gave an analogous definition of topological pressure, building on earlier work of Bowen \cite{rB73} for topological entropy.

In the symbolic setting, this definition goes as follows. Given $N\in \NN$, let $\LLL_{\geq N} = \{w\in \LLL : |w| \geq N\}$ denote the set of words of length at least $N$, and given $Z\subset X$, consider the following set of covers of $Z$ by cylinders corresponding to such words:
\begin{equation}\label{eqn:E+x}
\EE^+(Z,N) := \Big\{ \EEE\subset \LLL_{\geq N} : Z \subset \bigcup_{w\in \EEE} [w]^+ \Big\},
\end{equation}
Then to each $\alpha\in \RR$, associate an outer measure $m_\alpha$ on $X$ defined by
\begin{equation}\label{eqn:ma}
m_\alpha(Z) = \lim_{N\to\infty} \inf \Big\{ \sum_{w\in \EEE} e^{\Phi(w) - |w|\alpha} : \EEE \subset \EE^+(Z,N) \Big\}.
\end{equation}
The topological pressure $P\in \RR$ is the unique real number with the property that $m_\alpha(X) = \infty$ for all $\alpha < P$, and $m_\alpha(X) = 0$ for all $\alpha > P$. Now one may naturally ask whether $m_P$ has anything to do with the equilibrium measure(s) of $(X,\sigma,\ph)$. There are two problems that arise immediately:
\begin{enumerate}
\item a priori we could have $m_P(X) = 0$ or $m_P(X) = \infty$;
\item the outer measure $m_P$ does not restrict to a Borel measure unless $m_P \equiv 0$. Indeed, if we consider for each $x\in X$ the \emph{local unstable set} (or \emph{leaf})
\[
\Wul(x) = \{y\in X : y_k = x_k \text{ for all } k\leq 0\},
\] 
then $\EE^+(\Wul(x),N) = \EE^+([x_0]^+,N)$ for all $N$ if $X$ is a subshift of finite type, and thus $m_P(\Wul(x)) = m_P([x_0]^+)$, from which one can quickly deduce that $m_P$ is not additive on the Borel $\sigma$-algebra.
\end{enumerate}
The first problem is addressed in \S\ref{sec:finite}. For now we address the second problem, by defining a measure not on all of $X$, but on a local unstable set.
We modify the definition in \eqref{eqn:ma} slightly: instead of using weights $\Phi(w)$, we fix $x\in X$ and consider for each $w\in \LLL$ the quantity\footnote{If $\Wul(x) \cap [w]^+ = \emptyset$, then $\Phi_x^+(w) = -\infty$ and $e^{\Phi_x^+(w)} = 0$.}
\begin{equation}\label{eqn:P+x}
\Phi_x^+(w) = \sup \{ S_{|w|}\ph(y) : y\in \Wul(x) \cap [w]^+ \}.
\end{equation}
Then define the unstable leaf measure as
\begin{equation}\label{eqn:shift-mxu}
\mmu_x(Z) = \lim_{N\to\infty} \inf \Big\{ \sum_{w\in \EEE} e^{\Phi_x^+(w)-|w|P} : \EEE \in \EE^+(Z,N) \Big\}.
\end{equation}
Given $w\in \LLL_{\geq N}$, the set $\Wul(x) \cap [w]^+$ has diameter $\leq 2^{-N}$, which goes to $0$ as $N\to\infty$, and thus \cite[Lemma 2.14]{direct-srb} implies that $\mmu_x$ defines a Borel measure on $\Wul(x)$.

By reversing the direction of everything, we can analogously define a Borel measure $\ms_x$ on the \emph{local stable set} (or \emph{leaf})
\[
\Wsl(x) = \{y\in X : y_k = x_k \text{ for all } k\geq 0\}.
\]
For the definition of $\ms_x$ we work with the backwards cylinders corresponding to words $w\in A^n$:
\[
[w]^- = \{x\in X : x_{(-n,0]} = w\}.
\]
Given $Z \subset \Wsl(x)$ and $N\in \NN$, we consider the collection of covers
\begin{equation}\label{eqn:E-x}
\EE^-(Z,N) = \Big\{ \EEE\subset \LLL_{\geq N} : Z \subset \bigcup_{w\in \EEE} [w]^- \Big\}.
\end{equation}
Using the weight function
\begin{equation}\label{eqn:P-x}
\Phi_x^-(w) = \sup \Big\{ \sum_{k=0}^{|w|-1} \ph(\sigma^{-k} y) : y\in \Wsl(x) \cap [w]^- \Big\},
\end{equation}
we define the leaf measure $\ms_x$ by
\begin{equation}
	\label{eqn:shift-mxs}
	\ms_x(Z) = \lim_{N\to\infty} \inf \Big\{ \sum_{w\in \EEE} e^{\Phi_x^-(w) - |w|P} : \EEE \in \EE^-(Z,N) \Big\}.
\end{equation}
Once again, \cite[Lemma 2.14]{direct-srb} guarantees that $\ms_x$ is a Borel measure on $\Wsl(x)$. Now we must provide some way of guaranteeing that the leaf measures $\mmu_x$ and $\ms_x$ are positive and finite; we will do this in the next section. We conclude this section by observing that the set $\Wul(x)$ and the measure $\mmu_x$ only depend on $x_{(-\infty,0]}$; similarly, $\Wsl(x)$ and $\ms_x$ depend only on $x_{[0,\infty)}$. This is immediate from the definitions but deserves to be recorded:

\begin{proposition}\label{prop:xy}
If $x_{(-\infty,0]} = y_{(-\infty,0]}$ then $\Wul(x) = \Wul(y)$ and $\mmu_x = \mmu_y$. If $x_{[0,\infty)} = y_{[0,\infty)}$ then $\Wsl(x) = \Wsl(y)$ and $\ms_x = \ms_y$.
\end{proposition}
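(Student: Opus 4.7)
The plan is to observe that the proposition is essentially a direct reading of the definitions given in \S\ref{sec:build-leafs}, and the proof amounts to unwinding the role of the basepoint $x$ in each of those definitions.

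First I would treat the unstable statement. Assume $x_{(-\infty,0]} = y_{(-\infty,0]}$. By the very definition $\Wul(x) = \{z\in X : z_k = x_k \text{ for all } k\leq 0\}$, the set $\Wul(x)$ depends only on the coordinates $x_k$ with $k\leq 0$, so $\Wul(x)=\Wul(y)$. Next, inspect \eqref{eqn:P+x}: the weight $\Phi_x^+(w) = \sup\{S_{|w|}\ph(z) : z\in \Wul(x)\cap [w]^+\}$ is a function of $\Wul(x)$ only, so $\Phi_x^+(w) = \Phi_y^+(w)$ for every $w\in\LLL$. Finally, in \eqref{eqn:shift-mxu} the collection of admissible covers $\EE^+(Z,N)$ defined in \eqref{eqn:E+x} depends on $Z$ alone, not on the basepoint. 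Hence for any $Z\subset \Wul(x)=\Wul(y)$ the infimum and the limit defining $\mmu_x(Z)$ and $\mmu_y(Z)$ coincide term by term, giving $\mmu_x = \mmu_y$.

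The stable statement is proved in exactly the same way, now using \eqref{eqn:E-x} and \eqref{eqn:P-x} in place of \eqref{eqn:E+x} and \eqref{eqn:P+x}: assuming $x_{[0,\infty)} = y_{[0,\infty)}$, the set $\Wsl(x)$ involves only $x_k$ for $k\geq 0$, so $\Wsl(x) = \Wsl(y)$; the weight $\Phi_x^-(w)$ is a supremum over $\Wsl(x)\cap[w]^-$ and therefore equals $\Phi_y^-(w)$; and $\EE^-(Z,N)$ depends only on $Z$. Plugging these equalities into \eqref{eqn:shift-mxs} yields $\ms_x = \ms_y$.

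There is no substantive obstacle here; the only thing to be careful about is to make sure we are not confusing the basepoint $x$, which parametrizes the leaf and selects the weight function, with the argument $Z$ of the measure, which is what the cover sums range over. Once that distinction is made the two claims are immediate.
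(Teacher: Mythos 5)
Your proof is correct and is exactly the argument the paper intends: the paper simply remarks that the proposition ``is immediate from the definitions,'' and your unwinding of \eqref{eqn:E+x}, \eqref{eqn:P+x}, \eqref{eqn:shift-mxu} (and their stable counterparts) is precisely that observation made explicit. Nothing further is needed.
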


\begin{remark}\label{rmk:1-sided}
We will occasionally abuse notation slightly by using the notation $\Wul(x), \mmu_x$ when $x = \cdots x_{-2} x_{-1} x_0$ is a backward-infinite sequence, and $\Wsl(x), \ms_x$ when $x = x_0 x_1 x_2 \cdots$ is forward-infinite. By Proposition \ref{prop:xy}, the meaning of this notation is well-defined by extending such an $x$ to any bi-infinite sequence in $X$.
\end{remark}

\subsection{Conditions for finiteness}\label{sec:finite}

As the arguments in \cite{CPZ} reveal, the key property needed to guarantee positivity and finiteness of the leaf measures is uniform control of various version of the partition sums $\Lambda_n$ from \eqref{eqn:Lambda}. Start by observing that \eqref{eqn:P} means that $\Lambda_n \approx e^{nP}$ for large $n$, but leaves open the possibility that $\Lambda_n e^{-nP}$ can approach $0$ or $\infty$ subexponentially. In fact, by Fekete's lemma, the submultiplicativity property $\Lambda_{m+n} \leq \Lambda_m \Lambda_n$ guarantees that
\begin{equation}\label{eqn:Ln-geq}
\Lambda_n \geq e^{nP} \text{ for all } n;
\end{equation}
however, there is no universal upper bound. We define
\begin{equation}\label{eqn:Q}
\uQ := \varlimsup_{n\to\infty} \Lambda_n e^{-nP} \in [1,\infty].
\end{equation}
To obtain good bounds on $\mmu_x$, we will need $\uQ<\infty$. This is known to be true for transitive SFTs with H\"older continuous potentials, and in many other settings as well; see for example \S\ref{sec:spec} and \cite[Proposition 5.3]{CT13}. We will also need good control of partition sums restricted to $\Wul(x)$: let
\begin{equation}\label{eqn:Lambda+}
\Lambda_n^+(x) := \sum_{w\in \LLL_n} e^{\Phi_x^+(w)},
\end{equation}
recalling that words with $\Wul(x) \cap [w]^+ = \emptyset$ make no contribution to the sum, and then define
\begin{equation}\label{eqn:Q+}
\uQ^+_x := \varlimsup_{n\to\infty} \Lambda_n^+(x) e^{-nP},
\qquad
\lQ^+_x := \varliminf_{n\to\infty} \Lambda_n^+(x) e^{-nP}.
\end{equation}
Clearly we have $\lQ^+_x \leq \uQ^+_x \leq \uQ$, but we have no a priori lower bound on $\lQ^+_x$. Replacing $+$ with $-$ in \eqref{eqn:Lambda+} and \eqref{eqn:Q+} yields analogous quantities $\lQ^-_x \leq \uQ^-_x \leq \uQ$. The following result is proved in \S\ref{sec:shift-mxu-pf}.

\begin{theorem}\label{thm:finite}
Let $X$ be a two-sided shift space on a finite alphabet with language $\LLL$, and $\ph\colon X\to \RR$ a continuous potential. For each $x\in X$, define a Borel measure $\mmu_x$ on $\Wul(x)$ by \eqref{eqn:shift-mxu} and a Borel measure $\ms_x$ on $\Wsl(x)$ by \eqref{eqn:shift-mxs}. Then we have
\begin{align}\label{eqn:finite-u}
\uQ^+_x / \uQ &\leq \mmu_x(\Wul(x)) \leq \lQ^+_x, \\
\label{eqn:finite-s}
\uQ^-_x / \uQ &\leq \ms_x(\Wsl(x)) \leq \lQ^-_x.
\end{align}
\end{theorem}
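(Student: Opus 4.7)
The plan is to handle the upper and lower bounds of \eqref{eqn:finite-u} separately, then invoke symmetry for \eqref{eqn:finite-s}. Because $\EE^+(\Wul(x),N+1)\subseteq\EE^+(\Wul(x),N)$, the quantity
\[
a_N:=\inf_{\EEE\in\EE^+(\Wul(x),N)}\sum_{w\in\EEE}e^{\Phi_x^+(w)-|w|P}
\]
is non-decreasing in $N$, so $\mmu_x(\Wul(x))=\lim_N a_N=\sup_N a_N$. The stable bound \eqref{eqn:finite-s} will follow by applying the same reasoning with the roles of past and future reversed: $[w]^-$ replaces $[w]^+$, $\Lambda_n^-$ replaces $\Lambda_n^+$, and $\Phi_x^-$ replaces $\Phi_x^+$.

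For the upper bound $\mmu_x(\Wul(x))\leq\lQ^+_x$, I would test $a_N$ against the canonical cover by words of length exactly $N$: $\EEE_N:=\{w\in\LLL_N:[w]^+\cap\Wul(x)\neq\emptyset\}$. Every $y\in\Wul(x)$ lies in $[y_{[0,N)}]^+$ with $y_{[0,N)}\in\EEE_N$, so $\EEE_N\in\EE^+(\Wul(x),N)$, and the sum it yields is exactly $\Lambda_N^+(x)e^{-NP}$ (words not meeting $\Wul(x)$ contribute $0$). Thus $a_N\leq\Lambda_N^+(x)e^{-NP}$; combined with monotonicity of $a_N$, the elementary fact that a monotone non-decreasing sequence dominated by $b_N$ has limit at most $\liminf b_N$ gives $\lim_N a_N\leq\liminf_N\Lambda_N^+(x)e^{-NP}=\lQ^+_x$.

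For the lower bound $\mmu_x(\Wul(x))\geq\uQ^+_x/\uQ$ (trivial when $\uQ=\infty$), given any $\EEE\in\EE^+(\Wul(x),N)$, I would first use compactness of $\Wul(x)$ (a closed subset of compact $X$, covered by the clopen sets $[w]^+\cap\Wul(x)$) to extract a finite subcover $\EEE'\subseteq\EEE$. Fix any $n>\max_{w\in\EEE'}|w|$. For each $v\in\LLL_n$ with $[v]^+\cap\Wul(x)\neq\emptyset$, picking any $y\in[v]^+\cap\Wul(x)$ and any $w\in\EEE'$ containing $y$ shows $w$ is a prefix of $v$, so $v=wu$ with $u=v_{[|w|,n)}\in\LLL_{n-|w|}$; splitting the Birkhoff sum and using $\sigma^{|w|}z\in[u]^+$ for $z\in[v]^+\cap\Wul(x)$ gives the sub-additivity
\[
\Phi_x^+(v)\leq\Phi_x^+(w)+\Phi(u).
\]
Summing over $v$ while allowing overcounting (each $v$ decomposes as $wu$ for at least one pair $(w,u)$) produces the key estimate
\[
\Lambda_n^+(x)\leq\sum_{w\in\EEE'}e^{\Phi_x^+(w)}\Lambda_{n-|w|}.
\]
Dividing by $e^{nP}$ and taking $\limsup_{n\to\infty}$, which passes through the \emph{finite} sum over $\EEE'$ with each $\limsup_n\Lambda_{n-|w|}e^{-(n-|w|)P}=\uQ$, yields
\[
\uQ^+_x\leq\uQ\sum_{w\in\EEE'}e^{\Phi_x^+(w)-|w|P}\leq\uQ\sum_{w\in\EEE}e^{\Phi_x^+(w)-|w|P}.
\]
Thus $a_N\geq\uQ^+_x/\uQ$ for every $N$, and taking the supremum gives the claim.

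The technical heart is the displayed estimate $\Lambda_n^+(x)\leq\sum_{w\in\EEE'}e^{\Phi_x^+(w)}\Lambda_{n-|w|}$, which I expect to be routine once the prefix structure is in hand. The main subtlety is that no uniform upper bound on $\Lambda_k e^{-kP}$ is assumed---only $\limsup_k\Lambda_k e^{-kP}=\uQ$---so the reduction to a \emph{finite} subcover via compactness is essential: it is what allows interchange of $\limsup_n$ with the sum, producing the sharp constant $\uQ$ rather than the cruder $\sup_k\Lambda_k e^{-kP}$ that an infinite cover would force.
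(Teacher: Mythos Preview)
Your proposal is correct and follows essentially the same route as the paper: the upper bound via the canonical cover by length-$N$ words, and the lower bound via compactness to a finite subcover followed by the subadditivity estimate $\Phi_x^+(wu)\leq\Phi_x^+(w)+\Phi(u)$ and comparison with $\Lambda_{n-|w|}$. The only cosmetic difference is that where you pass the $\limsup$ through the finite sum directly, the paper fixes an arbitrary $Q>\uQ$, chooses $n$ large enough that $Q_{n-|w|}<Q$ for all $w\in\EEE'$, and then lets $Q\downarrow\uQ$; the two devices are equivalent.
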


Theorem \ref{thm:finite} guarantees that the measures $\mmu_x$ and $\ms_x$ are finite as soon as $\uQ<\infty$, but leaves open the possibility that they may vanish for some $x$. For a shift with specification and a potential with the Bowen property, there is $c>0$ such that $\lQ_x^{\pm} \geq c$ for all $x\in X$ \cite{rB745}, so that in particular all leaf measures are nonzero. See \S\ref{sec:spec} for a more general statement.

\subsection{Scaling under dynamics}\label{sec:dynamics}

A crucial property of $\alpha$-dimensional Hausdorff measure is the way it transforms under geometric scaling: if two sets are related by a transformation that scales distances by a factor of $r$, then their Hausdorff measures are related by a factor of $r^\alpha$. The corresponding property for the measures $\mmu_x$ and $\ms_x$ concerns how they transform under the dynamics of $\sigma$; they scale by a factor of $e^{P-\ph}$, as described in the following result, which is proved in \S\ref{sec:shift-mxs-pf}.

\begin{theorem}\label{thm:dynamics}
Let $X$ be a two-sided shift space on a finite alphabet with language $\LLL$, and $\ph\colon X\to \RR$ a continuous potential. Let $\mmu_x$ and $\ms_x$ be the families of leaf measures defined by \eqref{eqn:shift-mxu} and \eqref{eqn:shift-mxs}. Then if
$Z\subset \Wul(x)$ is a Borel set such that $\sigma(Z) \subset \Wul(\sigma x)$, we have
\begin{equation}\label{eqn:shift-mxu-scales}
\mmu_{\sigma x}(\sigma Z) = \int_Z e^{-\ph(y)+P} \,d\mmu_x(y).
\end{equation}
Similarly, given a Borel set $Z\subset \Wsl(x)$, we have
\begin{equation}\label{eqn:shift-mxs-scales}
\ms_{\sigma^{-1} x}(\sigma^{-1} Z) = \int_Z e^{-\ph(y)+P} \,d\ms_x(y).
\end{equation}
\end{theorem}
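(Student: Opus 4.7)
The plan hinges on a one-step scaling estimate. For $w \in \LLL_n$ with $w_0 = x_0$ and $w_1 = x_1$ (the only cylinders that can meet $Z$, since $\sigma Z \subset \Wul(\sigma x)$), write $w = aw'$ with $a = w_0$ and $w' \in \LLL_{n-1}$. Then $\sigma$ restricts to a bijection $\Wul(x) \cap [w]^+ \to \Wul(\sigma x) \cap [w']^+$, and the splitting $S_n\ph(y) = \ph(y) + S_{n-1}\ph(\sigma y)$ combined with the fact that any two points of $\Wul(x) \cap [w]^+$ agree on the window $(-n,n)$ yields, for any $y$ in the intersection,
\[
\bigl|\Phi_x^+(w) - \ph(y) - \Phi_{\sigma x}^+(w')\bigr| \leq V_n(\ph).
\]
Continuity of $\ph$ gives $V_n(\ph) \to 0$, so exponentiating,
\[
e^{\Phi_{\sigma x}^+(w') - (n-1)P} = (1 + o(1))\, e^{P - \ph(y)}\, e^{\Phi_x^+(w) - nP}
\]
uniformly in $w$ and $y$ as $n \to \infty$.

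With this in hand I would compare cover-sums. Given $\eps > 0$, pick $N$ so large that $V_N(\ph) < \eps$, and take any cover $\EEE \in \EE^+(Z, N)$. Discarding $w \in \EEE$ with $w_{[0,2)} \neq x_{[0,2)}$ (which cannot meet $Z$), the forward-shifted family $\sigma \EEE := \{w_{[1, |w|)} : w \in \EEE\}$ lies in $\EE^+(\sigma Z, N-1)$, and the one-step estimate gives
\[
\sum_{w' \in \sigma\EEE} e^{\Phi_{\sigma x}^+(w') - |w'| P} \leq e^{\eps} \sum_{w \in \EEE} e^{P - \ph(y_w)}\, e^{\Phi_x^+(w) - |w| P}
\]
for any choice of $y_w \in \Wul(x) \cap [w]^+$. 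The reverse direction is symmetric: a near-optimal cover of $\sigma Z$ pulls back under $\sigma^{-1}$ to a cover of $Z$, with only the $x_0$-preimage contributing since $Z \subset \Wul(x)$.

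The main obstacle is converting the right-hand cover-sum, which carries the continuous weight $e^{P - \ph(y_w)}$, into the integral $\int_Z e^{P - \ph}\, d\mmu_x$. I would handle this by partitioning $Z$ into finitely many Borel pieces $Z_j$ on each of which $e^{P - \ph}$ varies by at most $\eps$; this is possible because $\ph$ is uniformly continuous and the traces on $\Wul(x)$ of cylinder partitions give arbitrarily small $(-n,n)$-diameter sets. Applying the cover-sum bound on each $Z_j$ separately, taking the infimum over covers $\EEE_j \in \EE^+(Z_j, N)$, and summing in $j$ yields $\mmu_{\sigma x}(\sigma Z) \leq (1 + O(\eps))\int_Z e^{P - \ph}\, d\mmu_x$; letting $\eps \to 0$ closes the upper bound, and the reverse inequality is symmetric. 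The stable formula \eqref{eqn:shift-mxs-scales} follows from the same argument after swapping past and future: forward cylinders $[w]^+$ and $\sigma$ are replaced by backward cylinders $[w]^-$ and $\sigma^{-1}$, and $S_n\ph$ by the backward Birkhoff sum appearing in \eqref{eqn:P-x}.
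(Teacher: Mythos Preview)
Your proposal is correct and follows essentially the same approach as the paper: both establish a one-step relation between $\Phi_x^+(w)$ and $\Phi_{\sigma x}^+(\sigma w)$ via $S_n\ph(y) = \ph(y) + S_{n-1}\ph(\sigma y)$, use the bijection between covers of $Z$ and covers of $\sigma Z$, partition $Z$ into pieces on which $\ph$ is nearly constant, and send $\eps\to 0$. The only cosmetic difference is that the paper packages the ``nearly constant'' case as a separate lemma before partitioning, whereas you carry the variation bound $V_n(\ph)$ through the cover-sum directly and partition at the end.
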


\begin{remark}\label{rmk:RN}
It is occasionally useful to write the scaling property in \eqref{eqn:shift-mxu-scales} in one of the following equivalent forms, valid whenever $\sigma y \in \Wul(\sigma x)$ (that is, $y_{(-\infty,1]} = x_{(-\infty,1]}$):
\begin{equation}\label{eqn:mxu-RN}
\frac{d\mmu_{\sigma x}}{d(\sigma_* \mmu_x)}(\sigma y) = e^{-\ph(y) + P}, 
\quad \text{or} \quad
\frac{d \mmu_x}{d(\sigma^*\mmu_{\sigma x})}(y) = e^{\ph(\sigma^{-1}y) - P}.
\end{equation}
If $y_{(-\infty,n]} = x_{(-\infty,n]}$, then iterating the second of these gives
\begin{equation}\label{eqn:mxu-RN-n}
\frac{d \mmu_x}{d((\sigma^n)^* \mmu_{\sigma^n x})}(y) = e^{\sum_{k=1}^{n} \ph(\sigma^{-k}y) - nP}.
\end{equation}
Analogous formulas hold for $\ms_x$.  In particular
\begin{equation}\label{eqn:mxs-RN-n}
\frac{d\ms_{\sigma^{-n}x}}{d((\sigma^n)_* \ms_x)}(y)
= e^{-\sum_{k=0}^{n-1}\ph(\sigma^{-k}y) + nP}.
\end{equation}
In \S\ref{sec:gibbs} we will use these scaling properties to obtain Gibbs-type estimates.
\end{remark}

\subsection{Scaling under holonomies}\label{sec:holonomies}

Recalling \S\ref{sec:prod}, we say $R \subset X$ has \emph{product structure}, and refer to $R$ as a \emph{rectangle}, if for every $x,y\in R$, the bracket $[x,y]$ from \eqref{eqn:lps} is defined and lies in $R$. 
Note that every rectangle must be contained in a 1-cylinder: if $R$ is a rectangle, then there is $a\in A$ such that $R\subset [a]^+ = [a]^-$. 

Given a rectangle $R$ and a point $x\in R$, we will write (see \eqref{eqn:WRq})
\[
\Wu_R(x) := \Wul(x) \cap R
\quad\text{and}\quad
\Ws_R(x) := \Wsl(x) \cap R.
\]
Given $z,y\in R$, the \emph{(stable) holonomy map} $\pi^s_{z,y} : \Wu_R(z) \to \Wu_R(y)$ is defined by
\begin{equation}\label{eqn:pi-u}
	\pi_{z,y}^s(x) = [y,x],
\end{equation}
so that $\{\pi_{z,y}^s(x)\} = \Wsl(x) \cap \Wul(y)$; the map $\pi^s_{z,y}$ ``slides'' a point $x$ along its stable leaf until it reaches $\Wul(y)$. We will be interested in rectangles on which the unstable leaf measures $\mmu_x$ transform nicely under holonomies.

\begin{definition}\label{def:cts-hol}
Given a collection $\RRR$ of rectangles, the family of unstable leaf measures has \emph{uniformly continuous holonomies} on $\RRR$ if for every $\eps>0$, there exists $\delta>0$ such that given any $R\in \RRR$ and $y,z\in R$ such that $d(y,z)<\delta$, we have $\mmu_y(Y) = e^{\pm\eps} \mmu_z(\pi_{y,z}^s(Y))$ for all measurable $Y\subset \Wu_R(y)$.\footnote{We use the notation $A = e^{\pm C} B$ to mean that $e^{-C}B\leq A \leq e^C B$.}
\end{definition}

We prove the following in \S\ref{sec:shift-cts-pf}.

\begin{proposition}\label{prop:open-hol}
If $\ph$ has the Walters property, 
then the leaf measures $\mmu_x$ have uniformly continuous holonomies on the family 
of open rectangles.
\end{proposition}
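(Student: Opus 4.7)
The plan is to compare the weight functions $\Phi_y^+(w)$ and $\Phi_z^+(w)$ from~\eqref{eqn:P+x} term-by-term in the covers defining $\mmu_y(Y)$ and $\mmu_z(\pi_{y,z}^s(Y))$. First, use the Walters property to choose $k = k(\eps)$ such that $|S_n\ph(p) - S_n\ph(q)| \le \eps$ whenever $p_{[-k, n+k)} = q_{[-k, n+k)}$, and set $\delta := 2^{-k}$. For any open rectangle $R$ and $y,z \in R$ with $d(y,z) < \delta$ we then have $y_{[-k, 0]} = z_{[-k, 0]}$. The main obstacle is that $\Phi_y^+(w) = \sup\{S_{|w|}\ph(p) : p \in \Wul(y) \cap [w]^+\}$ is taken over a set that may contain points outside $R$; for such $p$, the bracket $[z,p]$ need not lie in $X$, and the natural candidate $q \in \Wul(z) \cap [w]^+$ for comparing $S_{|w|}\ph$-values disappears.

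To get around this, exploit the openness of $R$. For $x \in R$ set the depth
\[
N_R(x) := \min\{N : C_N(x) \subset R\},
\quad C_N(x) := \{p \in X : p_{[-N, N]} = x_{[-N, N]}\};
\]
this is finite on $R$ and the sublevel set $\{N_R \le M\}$ is open in $X$. Define
\[
Y_M := \{x \in Y : N_R(x) \le M \text{ and } N_R(\pi_{y,z}^s(x)) \le M\},
\]
which is Borel (using continuity of $\pi_{y,z}^s$) and increases to $Y$, so by continuity of measure from below it suffices to prove the estimate on each $Y_M$. The key claim is that for any $w \in \LLL$ with $|w| > M$ and $Y_M \cap [w]^+ \ne \emptyset$, one has $|\Phi_y^+(w) - \Phi_z^+(w)| \le \eps$. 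To see this, pick $x \in Y_M \cap [w]^+$; then for \emph{every} $p \in \Wul(y) \cap [w]^+$ we have $p_i = y_i = x_i$ for $i \in [-M, 0]$ (from $x \in \Wu_R(y)$) and $p_i = w_i = x_i$ for $i \in [0, M]$ (since $|w| > M$), so $p \in C_M(x) \subset R$; thus $\Wul(y) \cap [w]^+ \subset R$, and the same argument applied to $\pi_{y,z}^s(x) \in R$ shows that $\Wul(z) \cap [w]^+ \subset R$ as well. Consequently, any approximate maximizer $p$ of $\Phi_y^+(w)$ already lies in $R$, so $q := [z, p] \in R \subset X$ is a well-defined point of $\Wul(z) \cap [w]^+$ that agrees with $p$ on $[0, \infty)$ (by construction) and on $[-k, 0]$ (by closeness of $y,z$); hence they agree on $[-k, |w|+k)$ and Walters gives $|S_{|w|}\ph(p) - S_{|w|}\ph(q)| \le \eps$. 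A symmetric argument completes the estimate.

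Given the per-term bound, any cover $\EEE \subset \LLL_{\ge N}$ of $Y_M$ by cylinders $[w]^+$ with $N > M$ is automatically a cover of $\pi_{y,z}^s(Y_M)$ (the holonomy preserves $[w]^+$), so
\[
\sum_{w \in \EEE} e^{\Phi_y^+(w) - |w|P} = e^{\pm \eps} \sum_{w \in \EEE} e^{\Phi_z^+(w) - |w|P}.
\]
Taking infima over such covers and letting $N \to \infty$ gives $\mmu_y(Y_M) = e^{\pm \eps} \mmu_z(\pi_{y,z}^s(Y_M))$, and then $M \to \infty$ yields the statement for $Y$. Since $\delta$ depends only on $\eps$, this is uniform over all open rectangles $R \in \RRR$. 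The main difficulty is exactly the containment $\Wul(y) \cap [w]^+ \subset R$ for long enough $w$ with $[w]^+ \cap Y_M \ne \emptyset$, which is precisely where the openness of $R$ (together with the $Y_M$-approximation) is used in an essential way; without it, approximate maximizers of $\Phi_y^+$ could escape $R$ and the bracket comparison would break down.
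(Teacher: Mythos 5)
Your proof is correct, and it isolates exactly the right difficulty: the supremum defining $\Phi_y^+(w)$ ranges over all of $\Wul(y)\cap[w]^+$, not just over its intersection with $R$, so before the bracket with $z$ can be formed and the Walters property invoked one must ensure that this whole set lies in $R$. The paper secures this containment by a different decomposition: it writes the open rectangle as a countable disjoint union of basic clopen rectangles of the form $[u]^-\cap[v]^+$, and on each such piece any $w$ with $|w|\ge|v|$ whose cylinder meets the piece satisfies $[u]^-\cap[w]^+\subset R$, hence $\Wul(y)\cap[w]^+\subset R$; the case of a general open rectangle then follows by countable additivity over the pieces. You instead keep $R$ whole and exhaust the measurable set $Y$ by the sets $Y_M$ of points whose depth in $R$ (and whose holonomy image's depth) is at most $M$, restricting to covers by words of length $>M$ and finishing with continuity of measure from below. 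The two devices purchase the same key containment for all sufficiently long words that matter, and both yield the same uniform $\delta=2^{-k}$ coming from the Walters modulus, so the uniformity over the family of open rectangles is preserved either way; your version trades the paper's partition-and-additivity bookkeeping for the (routine) verification that $\{N_R\le M\}$ is open and that $\pi^s_{y,z}$ is a homeomorphism, so that $Y_M$ and its holonomy image are measurable. The only point worth making explicit is that in passing from the per-term estimate to the comparison of infima one should discard from a cover those words whose cylinders miss $Y_M$ (which only decreases the sum and preserves the covering property), since the per-term bound is proved only for words meeting $Y_M$; this is a one-line remark, not a gap.
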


\begin{proposition}\label{prop:zero-hol}
If $\ph\equiv 0$, then the leaf measures $\mmu_x$ have uniformly continuous holonomies on the family of all rectangles.
\end{proposition}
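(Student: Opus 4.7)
The plan is to prove the strictly stronger statement that when $\ph\equiv 0$ one has $\mmu_y(Y) = \mmu_z(\pi^s_{y,z}(Y))$ \emph{exactly}, for every rectangle $R$, every $y,z\in R$, and every measurable $Y\subset\Wu_R(y)$. Uniform continuity of holonomies in the sense of Definition \ref{def:cts-hol} then follows trivially with no constraint on $\delta$.

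Two observations drive the argument. First, inspecting \eqref{eqn:P+x} and \eqref{eqn:shift-mxu} with $\ph\equiv 0$, the weight attached to a word $w$ reduces to
\[
e^{\Phi_y^+(w)-|w|P} =
\begin{cases} e^{-|w|P} & \text{if } \Wul(y)\cap[w]^+ \neq \emptyset, \\ 0 & \text{otherwise,} \end{cases}
\]
so it depends on $y$ only through a nonemptiness condition on forward cylinders. Second, the holonomy $\pi^s_{y,z}(x)=[z,x]$ leaves every forward coordinate untouched, since $[z,x]_n=x_n$ for all $n\geq 0$ (using $z_0=y_0=x_0$, which holds because $R$ is contained in a single $1$-cylinder). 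Consequently, for every $w\in\LLL$ with $|w|\geq 1$ the conditions $x\in[w]^+$ and $\pi^s_{y,z}(x)\in[w]^+$ are equivalent, giving the set-level identity $\EE^+(Y,N) = \EE^+(\pi^s_{y,z}(Y),N)$ for every $N\geq 1$.

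To match the weighted sums, I would note that by monotonicity the infimum in \eqref{eqn:shift-mxu} is unchanged if one restricts to covers $\EEE$ whose every word $w$ satisfies $[w]^+\cap Y\neq\emptyset$ (discarding the rest preserves the cover and only decreases the sum). For such a $w$, any $x\in[w]^+\cap Y$ witnesses $\Wul(y)\cap[w]^+\neq\emptyset$, and its image $\pi^s_{y,z}(x)\in\Wul(z)\cap[w]^+$ witnesses $\Wul(z)\cap[w]^+\neq\emptyset$. Both sums then evaluate to $\sum_{w\in\EEE} e^{-|w|P}$, and taking infima followed by $N\to\infty$ yields the desired equality.

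I do not foresee a substantive obstacle; the proof is largely a matter of unpacking the definitions and exploiting that a zero potential reduces the weight to a form that is invariant under any transformation preserving forward coordinates. The only mild point requiring attention is the reduction to covers whose words intersect $Y$, which is a routine monotonicity step. Notably, no openness assumption on $R$ is needed, consistent with the statement applying to arbitrary rectangles, unlike Proposition \ref{prop:open-hol}.
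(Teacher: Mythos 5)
Your proof is correct and follows essentially the same route as the paper: the key point in both is that the holonomy $\pi^s_{y,z}$ preserves all forward coordinates, so $\EE^+(Y,N)=\EE^+(\pi^s_{y,z}(Y),N)$, and with $\ph\equiv 0$ the weighted sums over a cover coincide. Your extra step of restricting to covers whose words actually meet $Y$ is a sensible (and slightly more careful) way to handle the words $w$ for which one of $\Wul(y)\cap[w]^+$, $\Wul(z)\cap[w]^+$ might be empty; the paper glosses over this.
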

\begin{remark}
A shift space has open rectangles if and only if it has a synchronizing word. For such shift spaces, Proposition \ref{prop:open-hol} allows us to study equilibrium measures for a broad class of potential functions. However, there are many shift spaces, such as the natural coding space for Sinai billiards \cite{BD20,CD-billiards}, in which we should not expect to find any open rectangles, and thus Proposition \ref{prop:open-hol} cannot be used, but Proposition \ref{prop:zero-hol} can still provide information about the measure of maximal entropy. We expect that the definition of the leaf measures could be modified to guarantee uniformly continuous holonomies on the family of all rectangles for a broader class of potentials, but we do not pursue this here.
\end{remark}

In the next section, it will be important to have a formula for the Radon--Nikodym derivative of the holonomy map. To give this, we start with the following direct consequence of \eqref{eqn:lps}: 
\begin{equation}\label{eqn:shift-bracket}
\text{if $x_{[0,n]} = y_{[0,n]}$, then $[\sigma^n x,\sigma^n y] = \sigma^n[x,y]$}.
\end{equation}
With \eqref{eqn:shift-bracket} in mind, the following is immediate.

\begin{lemma}\label{lem:shift-rectangle}
Given any rectangle $R$, any $n\in \NN$, and any $w\in \LLL_{n+1}$, the sets $\sigma^n(R\cap [w]^+) = \sigma^n(R) \cap [w]^-$ and $\sigma^{-n}(R\cap [w]^-) = \sigma^{-n}(R) \cap [w]^+$ are rectangles as well. (They may be empty.)
\end{lemma}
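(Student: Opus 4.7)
The plan is to reduce both claims to the shift-bracket identity \eqref{eqn:shift-bracket} after first verifying the set-theoretic equality of the two descriptions of the image set.

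First, I would unpack the equality $\sigma^n(R\cap [w]^+) = \sigma^n(R) \cap [w]^-$ directly from the definitions of the forward and backward cylinders. For $|w| = n+1$, membership in $[w]^+$ is the condition $x_{[0,n]} = w$, while membership in $[w]^-$ is the condition $y_{[-n,0]} = w$. Since $(\sigma^n x)_k = x_{n+k}$, the two conditions correspond under $\sigma^n$, which gives the equality. (The corresponding equality for $\sigma^{-n}(R\cap[w]^-)$ is the mirror image.)

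Next I would verify the rectangle property of $R' := \sigma^n(R \cap [w]^+)$. Take $x', y' \in R'$, write $x' = \sigma^n x$ and $y' = \sigma^n y$ with $x,y \in R \cap [w]^+$. Because $x_{[0,n]} = y_{[0,n]} = w$ we have $x'_0 = x_n = w_n = y'_0$, so $[x', y']$ is defined; moreover \eqref{eqn:shift-bracket} gives
\[
[x', y'] = [\sigma^n x, \sigma^n y] = \sigma^n [x, y].
\]
Since $R$ is a rectangle, $[x,y] \in R$; and since the bracket agrees with $x$ on coordinates $\leq 0$ and with $y$ on coordinates $\geq 0$, both agreeing with $w$ on $[0,n]$, we get $[x,y] \in [w]^+$. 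Hence $[x',y'] = \sigma^n[x,y] \in R'$, so $R'$ is a rectangle.

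The second half, showing that $\sigma^{-n}(R \cap [w]^-)$ is a rectangle, is symmetric: for $x, y$ with $x_{[-n,0]} = y_{[-n,0]} = w$, applying $\sigma^{-n}$ to the shift-bracket identity (equivalently, using \eqref{eqn:shift-bracket} with $-n$ in place of $n$) yields $[\sigma^{-n}x, \sigma^{-n}y] = \sigma^{-n}[x,y]$, and the same reasoning as above applies. There is no real obstacle here beyond bookkeeping of indices; the work is entirely in confirming that the bracket respects the cylinder condition, which it does because $[x,y]$ inherits any coordinate on which $x$ and $y$ already agree.
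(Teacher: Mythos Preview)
Your proposal is correct and is exactly the argument the paper has in mind: the paper states the lemma as ``immediate'' from \eqref{eqn:shift-bracket} and gives no further proof, and you have simply written out that immediate verification in detail.
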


\begin{definition}\label{def:L-inv}
A family $\RRR$ of rectangles is \emph{$\LLL$-invariant} if given any $R\in \RRR$, $n\in \NN$, and $w\in \LLL_{n+1}$, the rectangles $\sigma^{n}(R\cap [w]^+)$ and $\sigma^{-n}(R \cap [w]^-)$ are contained in $\RRR$.
\end{definition}

The families of rectangles in Propositions \ref{prop:open-hol} and \ref{prop:zero-hol} are both $\LLL$-invariant.

\begin{definition}\label{def:pos-meas}
The family of leaf measures $\mmu_x$ is \emph{positive} on a family $\RRR$ of rectangles if for every $R\in \RRR$ there exists $x\in R$ such that $\mmu_x(\Wu_R(x))>0$.
\end{definition}

\begin{theorem}\label{thm:shift-cts}
Let $\RRR$ be an $\LLL$-invariant family of rectangles, and suppose that the family of leaf measures $\mmu_x$ is positive on $\RRR$. 
Then the following are equivalent.
\begin{enumerate}
\item The leaf measures $\mmu_x$ have uniformly continuous holonomies on $\RRR$.
\item The sum
\begin{equation}\label{eqn:Deltas}
	\Delta^s(x',x) := \sum_{n=0}^{\infty} \ph(\sigma^{n} x') - \ph(\sigma^{n} x)
\end{equation}
converges uniformly on $\{ (x,x') \in R^2 : R\in \mathcal{R}, x' \in \Ws_R(x)\}$,
is uniformly bounded on this set, and for every $R\in \RRR$ and $y,z\in R$, we have
\begin{equation}\label{eqn:shift-RN}
\frac{d(\pi^s_{z,y})_*\mmu_z}{d\mmu_y}(x)
= e^{\Delta^s((\pi^s_{z,y})^{-1}(x),x)}.
\end{equation}
\end{enumerate}
\end{theorem}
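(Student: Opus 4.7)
My plan is to prove the two directions separately.

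For $(2)\Rightarrow(1)$, given $\eps>0$, I would use uniform convergence of $\Delta^s$ on the set $\{(x,x'):R\in\RRR,\;x'\in\Ws_R(x)\}$ to fix $N$ with $\sum_{n\ge N}|\ph(\sigma^n x')-\ph(\sigma^n x)|<\eps/2$ uniformly. For $y,z\in R$ with $d(y,z)<2^{-M}$, the points $x\in\Wu_R(y)$ and $\pi^s_{y,z}(x)\in\Wu_R(z)$ agree on coordinates in $(-M,\infty)$, so $|\ph(\sigma^j\pi^s_{y,z}(x))-\ph(\sigma^j x)|\le V_{M+j}(\ph)$; continuity of $\ph$ makes the first $N$ terms sum to less than $\eps/2$ once $M$ is sufficiently large. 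Hence $|\Delta^s|<\eps$ uniformly, and integrating the bound $e^{\Delta^s}\in[e^{-\eps},e^\eps]$ over $Y\subset\Wu_R(y)$ via \eqref{eqn:shift-RN} gives $\mmu_z(\pi^s_{y,z}(Y))=e^{\pm\eps}\mmu_y(Y)$.

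For $(1)\Rightarrow(2)$, the plan is to combine Theorem \ref{thm:dynamics} with uniform continuity of holonomies on refined rectangles produced by $\LLL$-invariance. Fix $R\in\RRR$, $y,z\in R$, and $x\in\Wu_R(y)$; set $w=\pi^s_{y,z}(x)\in\Wu_R(z)$, so that $x\in\Ws_R(w)$. For each $n\ge 0$ define $R_n=R\cap[x_{[0,n+1)}]^+$, together with the cylinder neighborhoods $V_n^w=\Wu_R(z)\cap[x_{[0,n+1)}]^+$ and $V_n^x=\Wu_R(y)\cap[x_{[0,n+1)}]^+$. By $\LLL$-invariance $\sigma^n R_n\in\RRR$; a direct check shows that $\sigma^n w$ and $\sigma^n x$ lie in a common local stable leaf of $\sigma^n R_n$ with $d(\sigma^n w,\sigma^n x)\le 2^{-n-1}$, and that the stable holonomy $T_n=\pi^s_{\sigma^n w,\sigma^n x}$ satisfies $T_n\circ\sigma^n=\sigma^n\circ\pi^s_{w,x}$ on $V_n^w$. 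Applying \eqref{eqn:mxu-RN-n} together with Proposition \ref{prop:xy} to push $\mmu_z|_{V_n^w}$ and $\mmu_y|_{V_n^x}$ up to level $n$, then using uniform continuity in $\sigma^n R_n$ to relate $\mmu_{\sigma^n w}$ and $\mmu_{\sigma^n x}$ via $T_n$ with multiplicative error $e^{\pm\eps_n}$ (where $\eps_n\to 0$), and finally undoing \eqref{eqn:mxu-RN-n}, yields the key identity
\[
\mmu_z(V_n^w)=e^{\pm\eps_n}\int_{V_n^x}e^{G_n(\eta)}\,d\mmu_y(\eta),
\]
where $G_n(\eta)=\sum_{j=0}^{n-1}[\ph(\sigma^j[w,\eta])-\ph(\sigma^j\eta)]$ is the $n$-th partial sum of $\Delta^s([w,\eta],\eta)$.

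The identity immediately gives mutual absolute continuity of $\mmu_y$ and $(\pi^s_{z,y})_*\mmu_z$ and, by Lebesgue differentiation along the cylinder filtration $\{V_n^x\}$, the existence of the Radon--Nikodym derivative $D(x)=\lim_n\mmu_z(V_n^w)/\mmu_y(V_n^x)$ for $\mmu_y$-a.e.\ $x$. I expect the main obstacle to be extracting pointwise uniform convergence of $G_n$ (and hence of $\Delta^s$) from the convergence of these averages. The plan is to handle this by applying the identity at a much larger scale $m\gg n$: continuity of $\ph$ makes $G_n$ nearly constant on $V_m^x$, so the ratio $\mmu_z(V_m^w)/\mmu_y(V_m^x)$ factors (up to $e^{\pm\eps_m}$) as $e^{G_n(x)}\langle e^{G_m-G_n}\rangle_{V_m^x,\mmu_y}$, and the uniformity of $\eps_n$ across all refined rectangles $\sigma^n R_n\in\RRR$ translates into a uniform Cauchy estimate on the tails $G_m-G_n$ over the full set $\{(x,w):R\in\RRR,\;w\in\Ws_R(x)\}$. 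Passing to the limit in the key identity then produces both the Radon--Nikodym formula \eqref{eqn:shift-RN} and the uniform boundedness of $\Delta^s$.
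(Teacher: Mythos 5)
Your overall strategy matches the paper's. The $(2)\Rightarrow(1)$ direction is the same argument (uniform convergence of $\Delta^s$ plus uniform continuity of the partial sums gives $|\Delta^s|<\eps$ when the holonomy only moves coordinates at depth $\geq M$, then integrate \eqref{eqn:shift-RN}). Your $(1)\Rightarrow(2)$ skeleton is also the paper's: push forward by $\sigma^n$ using \eqref{eqn:mxu-RN-n}, apply uniform continuity of holonomies in the $\LLL$-invariant refinement $\sigma^n(R\cap[w]^+)$, pull back, and obtain the integral identity $\mmu_z(\pi^s_{y,z}(Y))=e^{\pm\eps_n}\int_Y e^{\Delta_n([x,z],x)}\,d\mmu_y(x)$ with the $n$-th partial sum in the exponent.

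There is, however, a genuine gap at exactly the step you flag as the ``main obstacle,'' and your proposed fix does not yet close it. Knowing that $\langle e^{G_m-G_n}\rangle_{V_m^x,\mmu_y}=e^{\pm(\eps_n+\eps_m)}$ does \emph{not} bound $(G_m-G_n)(x)$ pointwise, because $G_m-G_n$ is not nearly constant on $V_m^x$: its terms with index $j$ close to $m$ oscillate at full scale as $\eta$ ranges over the cylinder $[x_{[0,m+1)}]^+$. To convert the average estimate into a pointwise one you must evaluate the two integral identities on much finer sets $V_{m'}^x$ with $m'\gg m$ (on which $G_m-G_n$ \emph{is} nearly constant), and then you need $\mmu_y(V_{m'}^x)>0$ for all such cylinders. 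This positivity of cylinders is not automatic; it is the content of the paper's Lemma \ref{lem:full-support}, which is derived from the positivity hypothesis of Definition \ref{def:pos-meas} together with $\LLL$-invariance and the scaling formula \eqref{eqn:mxu-RN-n}. Your proposal never uses the positivity hypothesis, and without it the ``average over $V_{m'}^x$'' can be $0/0$ and the argument at the point $x$ collapses. (The paper's version of this step: the identity $\int_Y e^{\Delta_N}\,d\mmu_y=e^{\pm2\delta}\int_Y e^{\Delta_{N'}}\,d\mmu_y$ holds for \emph{all} measurable $Y$, and if $|\Delta_N-\Delta_{N'}|>2\delta$ at some point, by continuity it fails on a relatively open set, which has positive measure by full support --- a contradiction.) Two smaller points: uniform boundedness of $\Delta^s$ does not come from ``passing to the limit in the key identity'' but from uniform convergence plus boundedness of $\ph$ (the paper's Lemma \ref{lem:Delta-bdd}); and the detour through Lebesgue differentiation for $\mmu_y$-a.e.\ existence of the density is unnecessary once uniform convergence of $G_n$ is established, since the identity then passes to the limit everywhere.
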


The proof for this result is in \S \ref{sec:shift-cts-pf}.  An analogous result is true if we reverse the roles of stable and unstable and change the sign of each instance of $n$. 

\begin{remark}\label{rmk:leafs-related}
It is worth observing that \eqref{eqn:shift-RN} gives
\[
	\mmu_z(\Wu_R(z)) = \mmu_z((\pi^s_{z,y})^{-1}(\Wu_R(y)))
	= \int_{\Wu_R(y)} e^{\Delta^s((\pi^s_{z,y})^{-1}(x),x)} \,d\mmu_y(x).
\]
Thus if the conditions of Theorem \ref{thm:shift-cts} are satisfied, we actually have a stronger version of positivity: $\mmu_y(\Wu_R(y))>0$ for every $y\in R\in \RRR$.
\end{remark}

\subsection{Product measure construction of an equilibrium measure}\label{sec:product-measure}

To go from the leaf measures $\mmu_x$ and $\ms_x$ to an equilibrium measure for $(X,\sigma,\ph)$, we start with a product construction. Suppose that $\RRR$ is an $\LLL$-invariant family of rectangles on which the families of leaf measures $\mmu_x$ and $\ms_x$ both have uniformly continuous holonomies and are positive in the sense of Definition \ref{def:pos-meas}.

Given $R\in \RRR$ and $q\in R$, consider the bijection $\iota_q\colon\Wu_R(q) \times \Ws_R(q) \to R$ from \eqref{eqn:iota}, defined by $\iota_q(x,y) = [y,x]$, and the product measure $m_{R,q} := (\iota_{q})_* (\mmu_{q} \times \ms_{q})$, for which
\begin{equation}\label{eqn:mxm}
m_{R,q}(Z)
= \int_{\Wu_R(q)}\int_{\Ws_R(q)}\one_{Z}([y,z])\,d\ms_{q}(y)\,dm^u_{q}(z).
\end{equation}
Let $\lambda_R \ll m_{R,q}$ be the measure on $R$ defined by
\begin{equation}\label{eqn:def-lambda-R}
\frac{d\lambda_R}{dm_{R,q}}(z) = 
e^{\Delta^u([y,z],y)} e^{\Delta^s([y,z],y)}.
\end{equation}
The following is proved in \S\ref{sec:es-two-sided} and justifies the suppression of $q$ in the notation $\lambda_R$.

\begin{theorem}\label{thm:q-indep}
Let $X$ be a two-sided shift space and $\ph\colon X\to \RR$ be continuous. Suppose that $\RRR$ is an $\LLL$-invariant family of rectangles on which the families of leaf measures $\mmu_x$ and $\ms_x$ both have uniformly continuous holonomies and are positive. Then for every $R\in \RRR$, the measure $\lambda_R$ defined in \eqref{eqn:mxm} and \eqref{eqn:def-lambda-R} is independent of the choice of $q\in R$, and is nonzero.
\end{theorem}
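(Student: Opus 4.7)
The plan is to show that changing the base point $q \to q'$ alters $m_{R,q}$ and the density $\rho_q(z) := (d\lambda_R/dm_{R,q})(z)$ in exactly compensating ways, so that $\rho_q \, dm_{R,q} = \rho_{q'} \, dm_{R,q'}$ as measures on $R$.

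For the first step, I would compute $dm_{R,q'}/dm_{R,q}$. Using Fubini to expand $\int f\, dm_{R,q'}$ as an iterated integral over $\Wu_R(q') \times \Ws_R(q')$ against $\mmu_{q'} \times \ms_{q'}$, and then substituting $x = \pi^s_{q,q'}(x')$ with $x' \in \Wu_R(q)$ and $y = \pi^u_{q,q'}(y')$ with $y' \in \Ws_R(q)$, the bracket identity
\[
[\pi^u_{q,q'}(y'),\,\pi^s_{q,q'}(x')] = [y',x'],
\]
which follows directly from \eqref{eqn:lps}, preserves the integrand $f([y,x]) = f([y',x'])$. Applying Theorem~\ref{thm:shift-cts} and its stable analog now supplies the Radon--Nikodym factors and yields
\[
\frac{dm_{R,q'}}{dm_{R,q}}(z) = e^{\Delta^s(x,x')}\, e^{\Delta^u(y,y')},
\]
where $z = [y,x] = [y',x']$, with $x' = [q,z]$, $y' = [z,q]$, $x = [q',z]$, $y = [z,q']$. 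The $\Delta^s$ factor is well-defined because $x$ and $x'$ share the future of $z$, and the $\Delta^u$ factor because $y$ and $y'$ share the past of $z$.

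For the second step, I would check that the density in \eqref{eqn:def-lambda-R} transforms reciprocally,
\[
\rho_q(z)/\rho_{q'}(z) = e^{\Delta^s(x,x')}\, e^{\Delta^u(y,y')}.
\]
This is a cocycle calculation: from $\Delta^s(a,b) + \Delta^s(b,c) = \Delta^s(a,c)$ for $a,b,c$ on a common stable leaf (and the unstable analog), together with the observation that the triples $z,x,x'$ lie on a common stable leaf and $z,y,y'$ lie on a common unstable leaf, the $\Delta^s$ and $\Delta^u$ contributions of $\rho_q$ telescope into those of $\rho_{q'}$ with the desired ratio. Combining with the first step gives $\rho_q \, dm_{R,q} = \rho_{q'} \, dm_{R,q'}$, i.e., $\lambda_R$ is independent of $q$.

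For non-vanishing, the positivity assumption of Definition~\ref{def:pos-meas} gives some $x \in R$ with $\mmu_x(\Wu_R(x)) > 0$; Remark~\ref{rmk:leafs-related}, via \eqref{eqn:shift-RN} and the uniform boundedness of $\Delta^s$ from Theorem~\ref{thm:shift-cts}, upgrades this to $\mmu_y(\Wu_R(y)) > 0$ for every $y \in R$. The analogous statement for $\ms$ holds by the same reasoning, so $m_{R,q}(R) = \mmu_q(\Wu_R(q)) \cdot \ms_q(\Ws_R(q)) > 0$. Since $\Delta^u$ and $\Delta^s$ are uniformly bounded on $R$, the density $\rho_q$ is bounded away from zero, hence $\lambda_R(R) > 0$. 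The main obstacle will be the bookkeeping in the first step---verifying the bracket identity, tracking the directionality of the holonomy substitutions, and applying \eqref{eqn:shift-RN} along the correct leaves---after which the telescoping in Step~2 is essentially formal and the non-vanishing assertion is immediate.
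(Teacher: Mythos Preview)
Your proposal is correct and follows essentially the same approach as the paper: both arguments use the holonomy Radon--Nikodym formula from Theorem~\ref{thm:shift-cts} together with the additivity (cocycle) identity for $\Delta^{s,u}$ to show that changing $q$ to $q'$ leaves $\lambda_R$ unchanged, and both deduce nonvanishing from positivity, Remark~\ref{rmk:leafs-related}, and boundedness of $\Delta^{s,u}$. The only organizational difference is that the paper changes one leaf variable at a time inside the iterated integral (first replacing $\ms_q$ by $\ms_{q'}$, then $\mmu_q$ by $\mmu_{q'}$), whereas you package the same computation as a single Radon--Nikodym derivative $dm_{R,q'}/dm_{R,q}$ and then verify that the density ratio $\rho_q/\rho_{q'}$ cancels it; these are the same calculation rearranged.
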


Given disjoint $R_1,\dots, R_\ell \in \RRR$, define a measure $\lambda_R$ on the union $R=\bigcup_{i=1}^\ell R_i$ by 
\begin{equation}\label{eqn:def-lambda}
	\lambda_R=\sum_{i=1}^\ell\lambda_{R_i}.
\end{equation}
The first return time function $\tau \colon R\to \NN \cup \{\infty\}$ is given by
\[
\tau(x) = \min \{n\geq 1 : \sigma^n(x) \in R\}.
\]
If $\lambda_R(\{x \in R : \tau(x)=\infty\}) = 0$, then the first return map $T \colon R\to R$ given by $T(x) = \sigma^{\tau(x)}(x)$ is defined $\lambda_R$-a.e. In \S\ref{sec:es-two-sided} we prove the following.

\begin{theorem}\label{thm:return-map}
Let $X,\ph,\RRR$ be as in Theorem \ref{thm:q-indep}, and $R,\tau,T$ as above. If $\lambda_R(\{x\in R : \tau(x) = \infty\}) = 0$, then $\lambda_R$ is $T$-invariant.
\end{theorem}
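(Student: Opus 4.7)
I will verify $T$-invariance by establishing $\lambda_R(T^{-1}E)=\lambda_R(E)$ for every Borel $E\subset R$; assume without loss of generality $E\subset R_j$. Decompose the preimage according to source rectangle, return time, and word traversed:
\[
T^{-1}E\cap R_i \;=\; \bigsqcup_{n\geq 1,\,w\in\LLL_{n+1}}\bigl(R_i\cap[w]^+\cap\{\tau=n\}\cap\sigma^{-n}E\bigr).
\]
By \eqref{eqn:shift-bracket}, the set $B_{i,w,n}:=R_i\cap[w]^+\cap\sigma^{-n}R_j$ is a rectangle, and by Lemma \ref{lem:shift-rectangle} its image $\sigma^n B_{i,w,n}\subset R_j$ is a rectangle as well. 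The constraint $\tau(x)=n$ cuts out a Boolean combination of the further sub-rectangles $B_{i,w,n}\cap\sigma^{-k}R_m$ (for $1\leq k<n$, $1\leq m\leq\ell$), each again a rectangle by the same bracket argument; inclusion--exclusion then expresses the $(i,w,n)$-piece of $T^{-1}E$ as a signed sum of $\lambda_{R_i}$-measures of sub-rectangles of $B_{i,w,n}$.

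The core of the proof is a scaling identity: for any sub-rectangle $R'\subset R_i$ whose image $R'':=\sigma^n R'$ is a sub-rectangle of $R_j$,
\[
\sigma^n_*(\lambda_{R_i}|_{R'}) \;=\; \lambda_{R_j}|_{R''}.
\]
By Theorem \ref{thm:q-indep} (basepoint independence) one may replace these restricted measures by $\lambda_{R'}$ and $\lambda_{R''}$ defined via \eqref{eqn:def-lambda-R} with basepoints $q'\in R'$ and $q'':=\sigma^n q'\in R''$. The essential subtlety is that $\sigma^n$ does not carry the product structure at $q'$ onto that at $q''$ in the obvious way: for $z'\in\Wu_{R'}(q')$, the image $\sigma^n z'$ lies in $R''$ but has ``past'' determined by the previously uncontrolled coordinates $z'_1,\ldots,z'_n$, so it must be slid along its stable leaf inside $R''$ via the holonomy of Theorem \ref{thm:shift-cts} to land on $\Wu_{R''}(q'')$. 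Combining the iterated leaf-measure scaling formulas \eqref{eqn:mxu-RN-n} and \eqref{eqn:mxs-RN-n} (which introduce Birkhoff-sum factors $e^{\pm(S_n\ph-nP)}$) with the stable and unstable holonomy Radon--Nikodym derivatives \eqref{eqn:shift-RN} (each contributing a factor $e^{\Delta^{\bullet}}$), the Birkhoff sums telescope against the $\Delta^u+\Delta^s$ weights built into $\lambda_{R'}$ and $\lambda_{R''}$ in \eqref{eqn:def-lambda-R}, yielding net Jacobian $1$ and the desired identity.

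With the scaling identity in hand, summing over $(i,w,n)$ and applying inclusion--exclusion yields $\lambda_R(T^{-1}E)=\lambda_R(E\cap T(R))$, so it remains to show $\lambda_R(R\setminus T(R))=0$. Since $\sigma$ is a homeomorphism of $X$ and the leaf measures $\ms_x,\mmu_x$ are constructed symmetrically under $\sigma^{\pm 1}$ (the analogue of Theorem \ref{thm:shift-cts} noted after its statement), the same scaling argument applied in reverse time shows that $\lambda_R$-a.e.\ point of $R$ has a finite backward first-return and hence lies in $T(R)$.

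The hardest step will be the scaling identity of the second paragraph: writing out the change of variables $(z',y')\mapsto(z'',y'')$ induced on product structures by $\sigma^n$, factored as ``shift, then stable holonomy inside $R''$,'' and verifying the algebraic cancellation of the $\mmu$-Jacobian, $\ms$-Jacobian, holonomy weights, and $e^{\Delta^u+\Delta^s}$ corrections. Everything else --- the bracket-closure arguments, the inclusion--exclusion, and the forward/backward symmetry --- is routine measure-theoretic bookkeeping.
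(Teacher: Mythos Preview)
Your core scaling identity is correct and is essentially what the paper proves in its Lemma~\ref{lem:Eijn}. However, your packaging introduces two detours that the paper avoids, and one of them leaves a genuine gap as you have phrased it.

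First, the inclusion--exclusion is unnecessary. The paper fixes $q\in E_{ij}^n\cap[w]^+$ and computes, for an \emph{arbitrary} measurable $Z\subset E_{ij}^n\cap[w]^+$, both $\lambda_{R_i}(Z)$ and $\lambda_{R_j}(\sigma^n Z)$ directly from \eqref{eqn:mxm}--\eqref{eqn:def-lambda-R} using the basepoints $q$ and $\sigma^n q$. The indicator $\one_Z$ simply rides through the change of variables; nothing in the computation (the leaf-measure scaling \eqref{eqn:mxs-f-scales}--\eqref{eqn:mxu-f-scales} and the telescoping of $\Delta^u,\Delta^s$ against the Birkhoff sums, exactly as you outline) requires $Z$ to be a sub-rectangle. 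So there is no need to express $\{\tau=n\}$ as a Boolean combination of rectangles. Relatedly, your appeal to Theorem~\ref{thm:q-indep} to identify $\lambda_{R_i}|_{R'}$ with $\lambda_{R'}$ is not quite what that theorem says; it is true and easy, but basepoint independence within a single rectangle is a different statement from restriction-consistency across nested rectangles.

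Second, and more importantly, the paper proves the \emph{forward} identity $\lambda_R(T(Z))=\lambda_R(Z)$ rather than your backward identity $\lambda_R(T^{-1}E)=\lambda_R(E\cap T(R))$. The forward version immediately gives $\lambda_R(T(R^{(\infty)}))=\lambda_R(R^{(\infty)})=\lambda_R(R)$, so $\lambda_R(R\setminus T(R))=0$ comes for free. Your proposed route to this fact---``the same scaling argument applied in reverse time''---does not follow from the stated hypothesis, which only assumes the \emph{forward} first-return time is $\lambda_R$-a.e.\ finite; time-reversal symmetry of the construction would only yield the conclusion under the symmetric (backward) hypothesis, which you do not have. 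You can rescue this step, but the clean way is precisely the paper's: prove $\lambda_R(T(Z))=\lambda_R(Z)$ for all $Z$ (which your scaling identity already gives once you drop the restriction to sub-rectangles) and read off surjectivity up to null sets.
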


The procedure for passing from a $T$-invariant measure on $R$ to a $\sigma$-invariant measure on $X$ is standard: under the hypotheses of Theorem \ref{thm:return-map}, write $Y_n := \{y\in R : \tau(y) > n \}$, and consider the measure
\begin{equation}\label{eqn:es-mu}
\mu := \sum_{n= 0}^\infty\sigma_*^n \lambda_R|_{Y_n}.
\end{equation}
Clearly $\mu|_R = \lambda_R$. In \S\ref{sec:es-two-sided} we prove the following result.

\begin{theorem}\label{thm:push-product}
Let $X,\ph,\RRR,R,\tau,T$ be as in Theorems \ref{thm:q-indep} and \ref{thm:return-map}.
Suppose that $\lambda_R$ is positive, that $\int \tau \,d\lambda_R < \infty$, and that $\uQ<\infty$.
Then the measure $\mu$ defined in \eqref{eqn:es-mu} is positive, finite, and $\sigma$-invariant; moreover, its normalization $\mu/\mu(X)$ is an equilibrium measure with local product structure.
\end{theorem}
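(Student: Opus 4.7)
The plan is to verify each conclusion of the theorem in turn, leveraging the tower structure by which $\mu$ is built from $\lambda_R$ and $T$.

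For positivity, $\mu \geq \mu|_R = \lambda_R > 0$ by hypothesis. For finiteness, Kac's identity gives
\[
\mu(X) = \sum_{n \geq 0} \lambda_R(Y_n) = \int_R \tau\,d\lambda_R < \infty.
\]
For $\sigma$-invariance, I would use a standard tower computation: writing $\mu(\sigma^{-1}A) = \sum_{n \geq 0} \lambda_R(Y_n \cap \sigma^{-(n+1)}A)$, reindexing by $m = n+1$, and splitting $Y_{m-1} = Y_m \sqcup \{\tau = m\}$, the $Y_m$-terms reproduce $\mu(A) - \lambda_R(A \cap R)$, while the $\{\tau = m\}$-terms collapse to $\lambda_R(T^{-1}(A \cap R)) = \lambda_R(A \cap R)$ by Theorem \ref{thm:return-map}.

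For local product structure, $\mu$ is supported on $\bigcup_{n \geq 0} \sigma^n(R)$, which I partition as follows. For each $R_i$, $n \geq 0$, and $w \in \LLL_{n+1}$ with $R_i \cap [w]^+ \neq \emptyset$, Lemma \ref{lem:shift-rectangle} together with $\LLL$-invariance of $\RRR$ gives $R_{i,n,w} := \sigma^n(R_i \cap [w]^+) \in \RRR$. The countable family $\{R_{i,n,w}\}$ covers the support of $\mu$, and on each $R_{i,n,w}$ the scaling identities \eqref{eqn:mxu-RN-n}--\eqref{eqn:mxs-RN-n} for $\mmu_x$ and $\ms_x$ under $\sigma^n$, combined with the definition of $\lambda_R$ as a product measure weighted by \eqref{eqn:def-lambda-R}, imply that $\mu|_{R_{i,n,w}} \ll \lambda_{R_{i,n,w}}$. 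This verifies Definition \ref{def:lps}.

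For the equilibrium property, the inequality $h_{\mu/\mu(X)}(\sigma) + \int \ph\,d(\mu/\mu(X)) \leq P$ is the variational principle. For the reverse, I would invoke the Gibbs-type upper bound from Theorem \ref{thm:gibbs}, which under $\uQ < \infty$ produces a constant $C$ with $\mu([w]^+) \leq C e^{\Phi(w) - |w|P}$ for all $w \in \LLL$. Setting $\mu' := \mu/\mu(X)$ and $\hat\Phi_n(x) := \Phi(x_{[0,n)})$, applying this bound to the partition $\alpha_n$ into $n$-cylinders gives
\[
\frac{1}{n} H_{\mu'}(\alpha_n) \geq -\frac{\log(C/\mu(X))}{n} + P - \frac{1}{n}\int \hat\Phi_n\,d\mu'.
\]
Since $0 \leq \hat\Phi_n(x) - S_n\ph(x) \leq \sum_{k=0}^{n-1} V_{\min(k,n-k)}(\ph)$ and $V_k(\ph) \to 0$ by uniform continuity of $\ph$, a Cesaro argument yields $\frac{1}{n}(\hat\Phi_n - S_n\ph) \to 0$ uniformly, whence $\frac{1}{n}\int \hat\Phi_n\,d\mu' \to \int \ph\,d\mu'$. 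Letting $n \to \infty$ gives $h_{\mu'}(\sigma) \geq P - \int \ph\,d\mu'$.

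The main obstacle is the equilibrium step, which depends on extracting the clean upper Gibbs bound $\mu([w]^+) \leq C e^{\Phi(w) - |w|P}$ for the global measure $\mu$ from Theorem \ref{thm:gibbs} (as opposed to only the leaf or product measures); this presumably uses both the scaling and holonomy properties of the leaf measures integrated up through the tower. The local product structure step also requires care to ensure that the countable family $\{R_{i,n,w}\}$ genuinely has full $\mu$-measure, rather than missing $\mu$-charged boundary pieces.
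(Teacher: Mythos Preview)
Your treatment of positivity, finiteness, $\sigma$-invariance, and local product structure matches the paper's, which dispatches these in a few lines using the same tower bookkeeping and the rectangles $\sigma^k(R_i\cap [w]^+)$.

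The gap is exactly where you flag it. The inequality $\mu([w]^+)\leq C e^{\Phi(w)-|w|P}$ for \emph{all} $w\in\LLL$ is not available from the paper's results and is not what you would get from Theorem~\ref{thm:gibbs} or Corollary~\ref{cor:lambda-Gibbs}: those give the bound only for $\lambda_R$. Since $\mu=\sum_{n\geq 0}\sigma_*^n(\lambda_R|_{Y_n})$, lifting to $\mu$ would require controlling $\sum_n\lambda_R(Y_n\cap\sigma^{-n}[w]^+)$ uniformly in $w$, and the hypothesis $\int\tau\,d\lambda_R<\infty$ by itself does not do this. In fact, even in the synchronizing application (Theorem~\ref{thm:sync}) the paper only asserts the Gibbs bound \eqref{eqn:sync-Gibbs} for words that begin and end with $v$, precisely so that $[w]^+\subset R$ and one can fall back on the $\lambda_R$-bound. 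So your entropy argument, which is otherwise clean, rests on an estimate that is not established and probably not true in this generality.

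The paper avoids this by never leaving the base: with $\tilde\ph=S_\tau\ph$, Abramov/Kac gives $\mu(X)\big(h_\mu(\sigma)+\int\ph\,d\mu\big)=h_{\lambda_R}(T)+\int_R\tilde\ph\,d\lambda_R$, so it suffices to show the right-hand side is $\geq 0$ (recall $P=0$). The $\lambda_R$-Gibbs bound \eqref{eqn:lambda-w-upper-bound} applies to the return-time cylinders $[x_0\cdots x_{\tau_n(x)-1}]^+$, and combining Shannon--McMillan--Breiman for $(R,T,\lambda_R)$ with Birkhoff for $\tilde\ph$ reduces the problem to showing $\tfrac{1}{n}\sum_{j=0}^{\tau_n(x)-1}V_j(\ph)\to 0$ $\lambda_R$-a.e. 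This is Lemma~\ref{lem:Phi-Sn-lim}, and it is where integrability of $\tau$ is genuinely used: the upper limit $\tau_n(x)$ is random, so one first invokes Birkhoff for $\tau$ to get $\tau_n(x)/n\to\tau_\infty(x)<\infty$ a.e., and only then runs the Ces\`aro argument. Your fixed-$n$ version would be fine if the global Gibbs bound held, but on the induced system the analogue of ``$n$'' is $\tau_n(x)$ and the extra step is needed.
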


\subsection{Gibbs-type estimates}\label{sec:gibbs}

We conclude our general results with a description of how combining Theorem \ref{thm:finite} with the scaling estimates in \eqref{eqn:mxu-RN-n} provides Gibbs-type estimates on the weight that the leaf measures give cylinders. These lead in turn to Gibbs-type bounds for the measures $\lambda_R$ from \eqref{eqn:mxm}--\eqref{eqn:def-lambda-R}.

Given $x\in X$ and $w\in \LLL_n$ such that $w_1 = x_0$, we adopt the notation
\begin{equation}\label{eqn:x.w}
x.w := \cdots x_{-2} x_{-1} w_1 w_2 w_3 \cdots w_n.
\end{equation}
That is, $x.w$ is the left-infinite sequence obtained by taking the negative half of $x$ and appending $w$. (The result may or may not be legal in $X$: it is legal if and only if $\Wul(x) \cap [w]^+ \neq \emptyset$.) 
The sequence $x.w$ is indexed by $(-\infty,0] \cap \ZZ$, so $(x.w)_0 = w_n$, $(x.w)_{-1} = w_{n-1}$, and so on.
Now observe that for a symbol $a$ in the alphabet $A$,
\[
\sigma^n(\Wul(x) \cap [w]^+) = \bigcup_{a\in A} \Wul(x.wa),
\]
where as in Remark \ref{rmk:1-sided} we abuse notation slightly by using the notation $\Wul$ when $x.wa$ is only a one-sided infinite sequence; this is justified by Proposition \ref{prop:xy}. Some of the sets $\Wul(x.wa)$ may be empty.

Consider the quantity
\begin{equation}\label{eqn:Vxw}
V_x^+(w) := \sup \{ |S_n \ph(y) - S_n\ph(z)| : y,z \in \Wul(x) \cap [w]^+ \}.
\end{equation}
We see immediately that given any $y\in \Wul(x) \cap [w]^+$, we have 
\begin{equation}\label{eqn:Vxw-2}
\Phi_x^+(w) - V_x^+(w) \leq S_n\ph(y) \leq \Phi_x^+(w). 
\end{equation}
Given a single symbol $a\in A$ such that $\Wul(x.wa) \neq \emptyset$, write $\mmu_{x.wa}$ for the corresponding leaf measure; then \eqref{eqn:mxu-RN-n}, \eqref{eqn:Vxw-2}, and  Proposition \ref{prop:xy} give
\[
e^{\Phi_x^+(w) - nP - V_x^+(w)} \leq
\frac{d \mmu_x}{d((\sigma^*)^n \mmu_{x.wa})}(y)
\leq e^{\Phi_x^+(w) - nP}
\]
for each $y\in \Wul(x) \cap [wa]^+$. 
Writing $\|\mmu_{x.wa}\| := m_{x.wa}(\Wul(x.wa))$ for the total weight of the measure $\mmu_{x.wa}$, we obtain
\[
e^{-V_x^+(w)} \sum_a 
\|\mmu_{x.wa}\|
\leq \frac{\mmu_x([w]^+)}{e^{\Phi_x^+(w) - nP}}
\leq \sum_a \|\mmu_{x.wa}\|,
\]
and Theorem \ref{thm:finite} lets us conclude the following Gibbs-type estimate:

\begin{theorem}\label{thm:gibbs}
Let $X$ be a two-sided shift space on a finite alphabet $A$ with language $\LLL$, and $\ph\colon X\to \RR$ a continuous potential. Then given any $x\in X$ and $w\in \LLL_n$, we have
\begin{equation}\label{eqn:gibbs+}
\frac{\sum_{a\in A} \uQ^+_{x.wa}}{e^{V_x^+(w)} \uQ}
\leq  \frac{\mmu_x([w]^+)}{e^{\Phi_x^+(w) - nP}}
\leq  \sum_{a\in A} \lQ^+_{x.wa}.
\end{equation}
An analogous bound holds for $\ms_x([w]^-)$:
\begin{equation}\label{eqn:gibbs-}
	\frac{\sum_{a\in A}\uQ^-_{aw.x}}{e^{V_x^-(w)}\uQ}
	\leq \frac{\ms_x([w]^-)}{e^{\Phi_{x}^-(w)-nP}} \leq\sum_{a\in A}\lQ^-_{aw.x}.
\end{equation}
\end{theorem}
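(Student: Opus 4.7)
The plan is to simply assemble the bound \eqref{eqn:gibbs+} from the calculation already laid out in the paragraphs preceding the theorem, and then carry out the symmetric argument for \eqref{eqn:gibbs-}. First I would record the disjoint decomposition
\[
\Wul(x) \cap [w]^+ = \bigsqcup_{a\in A} \{y \in \Wul(x) : y_{[0,n)} = w,\ y_n = a\},
\]
together with the identification $\sigma^n(\Wul(x) \cap [wa]^+) = \Wul(x.wa)$ coming from the choice of indexing in \eqref{eqn:x.w}. On each piece, the scaling identity \eqref{eqn:mxu-RN-n} (after pulling back by $\sigma^n$) expresses
\[
\mmu_x(\Wul(x) \cap [wa]^+) = \int_{\Wul(x.wa)} e^{S_n\ph(\sigma^{-n}(\cdot)) - nP} \, d\mmu_{x.wa},
\]
where the Birkhoff sum in the integrand is the one that, when pulled back to $\Wul(x) \cap [wa]^+$, becomes $S_n\ph(y)$.

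Next, I would use the definition of $\Phi_x^+(w)$ together with the pointwise two-sided bound \eqref{eqn:Vxw-2}, namely $\Phi_x^+(w) - V_x^+(w) \leq S_n\ph(y) \leq \Phi_x^+(w)$ for every $y \in \Wul(x) \cap [w]^+$, to sandwich the integrand by constants depending only on $w$ and $x$. Summing the resulting estimates over $a\in A$ and rearranging gives
\[
e^{-V_x^+(w)} \sum_{a\in A} \|\mmu_{x.wa}\| \;\leq\; \frac{\mmu_x([w]^+)}{e^{\Phi_x^+(w) - nP}} \;\leq\; \sum_{a\in A} \|\mmu_{x.wa}\|,
\]
which is precisely what the paper already isolates. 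At this point Theorem \ref{thm:finite}, applied to each backward-infinite sequence $x.wa$ (with the one-sided convention justified by Proposition \ref{prop:xy} and Remark \ref{rmk:1-sided}), provides $\uQ^+_{x.wa}/\uQ \leq \|\mmu_{x.wa}\| \leq \lQ^+_{x.wa}$, and substituting these bounds yields \eqref{eqn:gibbs+}.

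For the stable bound \eqref{eqn:gibbs-}, I would run the same argument with $+$ replaced by $-$ and $\sigma^n$ replaced by $\sigma^{-n}$: the relevant decomposition is $\sigma^{-n}(\Wsl(x) \cap [w]^-) = \bigcup_{a\in A} \Wsl(aw.x)$, the scaling identity is \eqref{eqn:mxs-RN-n}, and the pointwise sandwiching uses the analog $V_x^-(w)$ of \eqref{eqn:Vxw}. Again Theorem \ref{thm:finite} supplies the endpoint bounds on $\|\ms_{aw.x}\|$. No single step here is a serious obstacle; the only thing that needs care is the bookkeeping of which sequence (left- or right-infinite) indexes each leaf measure and ensuring the scaling identity is invoked with the correct sign, so I would write out the indexing for the $+$ case explicitly and then simply transcribe the $-$ case.
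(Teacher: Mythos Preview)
Your proposal is correct and follows exactly the approach the paper uses: the argument is precisely the calculation laid out in the paragraphs immediately preceding the theorem statement (decompose $[w]^+$ by the next symbol, apply the scaling identity \eqref{eqn:mxu-RN-n}, sandwich via \eqref{eqn:Vxw-2}, sum over $a$, and invoke Theorem~\ref{thm:finite}), with the stable case obtained by the obvious reversal. There is nothing to add.
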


Once again we point out that if $X$ has the specification property and $\ph$ has the Bowen property, then the quantities in \eqref{eqn:gibbs+} and \eqref{eqn:gibbs-} are all bounded away from $0$ and $\infty$ independently of $x$ and $w$, yielding the usual Gibbs property.

Theorem \ref{thm:gibbs} yields an upper Gibbs bound for the product measures used in \S\ref{sec:product-measure}; the constant involved is given in terms of $\lQ$, and so for this bound to be useful we will need to know that $\lQ<\infty$.

\begin{corollary}\label{cor:lambda-Gibbs}
Let $X$ be a two-sided shift space on a finite alphabet and $\ph\colon X\to \RR$ be continuous. Suppose that $\RRR$ is an $\LLL$-invariant family of rectangles on which the families of leaf measures $\mmu_x$ and $\ms_x$ both have uniformly continuous holonomies.
Let $C = \max(\|\Delta^s\|,\|\Delta^u\|)$, which is finite by Theorem \ref{thm:shift-cts}, and let $K = (\#A) \lQ^2 e^{2C}$.
Then for every $R\in \RRR$, the measure $\lambda_R$ defined in \eqref{eqn:mxm} and \eqref{eqn:def-lambda-R} satisfies the following bound for all $w\in \LLL$:
\begin{equation}\label{eqn:lambda-Gibbs}
\lambda_R([w]^+) \leq K e^{\Phi(w) - |w|P}.
\end{equation}
\end{corollary}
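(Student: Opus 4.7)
The plan is to unfold the definition of $\lambda_R$, bound the Radon--Nikodym density factor by $e^{2C}$ using Theorem \ref{thm:shift-cts}, and then invoke the Gibbs-type estimate of Theorem \ref{thm:gibbs} together with the total-mass bound from Theorem \ref{thm:finite}. Fix an arbitrary base point $q\in R$ (permissible by Theorem \ref{thm:q-indep}) and combine \eqref{eqn:mxm}--\eqref{eqn:def-lambda-R} to write
\[
\lambda_R([w]^+) = \int_{\Wu_R(q)} \int_{\Ws_R(q)} \one_{[w]^+}([y,z])\, e^{\Delta^u([y,z],y)+\Delta^s([y,z],y)} \, d\ms_q(y)\, d\mmu_q(z).
\]
By Theorem \ref{thm:shift-cts} the exponent is bounded in absolute value by $2C$. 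The key observation is that in $[y,z]$ the point $y\in\Ws_R(q)$ contributes only the non-positive indices and $z\in\Wu_R(q)$ contributes only the non-negative ones, so the indicator $\one_{[w]^+}([y,z])$ depends only on $z$ and equals $\one_{[w]^+}(z)$; Fubini then splits the double integral into a product.

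Next I would bound the two factors separately. For the stable factor, monotonicity together with Theorem \ref{thm:finite} yields $\ms_q(\Ws_R(q)) \leq \ms_q(\Wsl(q)) \leq \lQ^-_q \leq \lQ$, where the last step uses $\Lambda^-_n(q) \leq \Lambda_n$ coordinatewise. For the unstable factor, Theorem \ref{thm:gibbs} applied to the cylinder $[w]^+$ gives
\[
\mmu_q(\Wu_R(q)\cap[w]^+) \leq \mmu_q([w]^+) \leq \Big(\sum_{a\in A} \lQ^+_{q.wa}\Big) e^{\Phi_q^+(w)-|w|P},
\]
and I would absorb this into the desired form via $\Phi_q^+(w)\leq \Phi(w)$ (the sup defining $\Phi^+_q$ in \eqref{eqn:P+x} is taken over a subset of $[w]^+$) and $\lQ^+_{q.wa}\leq\lQ$ (again since $\Lambda_n^+(z)\leq\Lambda_n$), giving $\mmu_q([w]^+) \leq (\#A)\lQ\,e^{\Phi(w)-|w|P}$.

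Multiplying the three bounds produces $\lambda_R([w]^+) \leq (\#A)\lQ^2 e^{2C} e^{\Phi(w)-|w|P} = Ke^{\Phi(w)-|w|P}$, which is the claimed inequality. There is no deep obstacle here; this is essentially a bookkeeping argument that packages Theorems \ref{thm:finite}, \ref{thm:shift-cts}, and \ref{thm:gibbs}. The only subtle step is the Fubini decomposition, which hinges on $[w]^+$ being a cylinder on non-negative coordinates and therefore being measurable with respect to the unstable factor in the product structure $\iota_q$; if instead one had a cylinder involving negative indices the roles of the $\mmu_q$- and $\ms_q$-factors would swap, but the bookkeeping would be identical.
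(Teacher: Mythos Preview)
Your argument is correct and essentially identical to the paper's: both unfold the definition of $\lambda_R$, bound the density by $e^{2C}$, use that $\one_{[w]^+}([y,z])$ depends only on $z$ to factor the double integral, and then apply Theorems \ref{thm:finite} and \ref{thm:gibbs} to the two factors. The only cosmetic difference is that the paper first disposes of the case $[w]^+\cap R=\emptyset$ and then chooses the reference point $q\in[w]^+\cap R$, whereas you take $q$ arbitrary; your version works just as well since the indicator simplification does not require $q\in[w]^+$.
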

\begin{proof}
If $[w]^+\cap R = \emptyset$, there is nothing to prove, so assume the intersection is nonempty.
By Theorem \ref{thm:q-indep}, we can define $\lambda_R$ using $q\in [w]^+ \cap R$. We get
	\begin{align*}
		\lambda_{R}([w]^+)
		&=\int_{\Wu_{R}(q)}\int_{\Ws_{R}(q)}e^{\Delta^u([y,z],y)}e^{\Delta^s([y,z],z)}\one_{[w]^+}\,d\ms_{q}(y)d\mmu_{q}(z)\\
		&\leq e^{2C}\int_{\Wul(q)}\int_{\Wsl(q)}\one_{[w]^+}\,d\ms_{q}(y)d\mmu_{q}(z)\\
		& = e^{2C}\int_{\Wul(q)\cap [w]^+}\int_{\Wsl(q)}\,d\ms_{q}(y)d\mmu_{q}(z)\\
		& = e^{2C}\mmu_{q}([w]^+)\ms_{q}(\Wsl(q)).
	\end{align*}
By Theorems \ref{thm:finite} and \ref{thm:gibbs} we have
\[
\mmu_{q}([w]^+)\ms_{q}(\Wsl(q))\leq  (\#A)\lQ^2e^{\Phi^+(w)},
\]
which proves the corollary.
\end{proof}

\section{Examples}\label{sec:spec}

\subsection{Subshifts of finite type}

Let $X$ be a topologically mixing subshift of finite type on a finite alphabet $A$, determined by a finite set $\FFF$ of forbidden words. Let $\ph\colon X\to\RR$ have the Walters property. Then it is well-known \cite{rB745,pW78} that $(X,\ph)$ has a unique equilibrium measure, and that $\uQ<\infty$ and $\inf_x \lQ^+>0$, $\inf_x \lQ^->0$, so the leaf measures $\mmu_x$ and $\ms_x$ are uniformly positive and finite by Theorem \ref{thm:finite}.

The product measure construction in \S\ref{sec:product-measure} can be carried out as follows.
Let $k=\max\{|u|:u\in\FFF\}$, and observe that for all $w\in\LLL$ with $|w|\geq k$, the cylinder $[w]^+$ has product structure: indeed, given any $x,y \in [w]^+$, any subword of $[x,y]$ with length $\leq k$ must be a subword of either $x$ or $y$, and thus cannot lie in $\FFF$ (since $x,y\in X$), allowing us to conclude that $[x,y] \in X$ and hence $[x,y]\in [w]^+$.  

By Proposition \ref{prop:open-hol}, the leaf measures $\mmu_x$ and $\ms_x$ have uniformly continuous holonomies on the family $\RRR$ of open rectangles. By the previous paragraph, this family contains $[w]^+$ for every $w\in \LLL_k$. So we can write $X$ as the finite union of open rectangles:
\[X=\bigcup_{w\in\LLL_k}[w]^+.\]
On each $[w]^+$, \eqref{eqn:mxm} and \eqref{eqn:def-lambda-R} produce a positive and finite measure $\lambda_w$, and taking $R = X$ we have return time $\tau\equiv 1$, so the construction in \eqref{eqn:def-lambda} and \eqref{eqn:es-mu} reduces to
\[
\mu = \sum_{w\in \LLL_k} \lambda_w.
\]
The return time is integrable (since it is bounded and each $\lambda_w$ is finite), so by Theorem \ref{thm:push-product}, $\mu/\mu(X)$ is the unique equilibrium measure.

\subsection{Synchronizing words}\label{sec:synchronizing-words}

The result for subshifts of finite type can be extended to shift spaces with a synchronizing word and the appropriate counting estimates.  

\begin{definition}\label{def:sync}
A word $v$ is said to be \emph{synchronizing} if for every pair $u,w\in\LLL$ such that $uv,vw\in\LLL$, we have that $uvw\in\LLL$ as well.
\end{definition}

Equivalently, $v$ is synchronizing if the cylinder $[v]^+$ has product structure; in particular, $X$ has a synchronizing word if and only if the family $\RRR$ of open rectangles (as in Proposition \ref{prop:open-hol}) is nonempty. In this section we give a condition for our main results to apply to such shift spaces. In a separate paper \cite{CD-billiards}, we will study shift spaces that code dispersing billiards and do not have any open rectangles.

We remark that a shift space is a subshift of finite type if and only if all sufficiently long words are synchronizing. To go beyond SFTs, we impose a condition on sequences that do not contain a given synchronizing word: given a synchronizing word $v\in \LLL$ for a shift space $X$, we write
\begin{equation}\label{eqn:Yv}
Y_v = \{x\in X : x \text{ does not contain } v\}.
\end{equation}
Observe that $Y_v$ is closed and shift-invariant.

\begin{theorem}\label{thm:sync}
Let $X$ be a shift space with a synchronizing word $v$. Suppose that $\ph\colon X\to \RR$ has the Walters property, and that the subshift $Y_v$ defined in \eqref{eqn:Yv} satisfies
\begin{equation}\label{eqn:sync-assumption}
P(Y_v,\ph) < P(X,\ph).
\end{equation}

Then the following are true.
\begin{enumerate}
\item $(X,\sigma,\ph)$ has a unique equilibrium measure.\label{item:uniquness}
\item The equilibrium measure has local product structure in the sense of \S\ref{sec:prod}.
\item Writing $R = [v]^+$ and letting $\tau \colon R\to \NN \cup \{\infty\}$ be the first return time,
the measure $\lambda = \lambda_R$ defined by \eqref{eqn:mxm}--\eqref{eqn:def-lambda-R}
is nonzero and $\int \tau \,d\lambda < \infty$.
\item The $\sigma$-invariant measure $\mu$ defined in \eqref{eqn:es-mu} is a scalar multiple of the unique equilibrium measure.
\item There exists $K>0$ such that if $w\in \LLL$ begins and ends with the synchronizing word $v$, then 
\begin{equation}\label{eqn:sync-Gibbs}
K^{-1} e^{-|w|P(X,\ph) + \Phi(w)} \leq \mu([w]^+) \leq K e^{-|w|P(X,\ph) + \Phi(w)}.
\end{equation}
\end{enumerate}
\end{theorem}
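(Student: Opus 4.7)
The plan is to apply Theorem~\ref{thm:push-product} with the rectangle $R = [v]^+$. Since $v$ is synchronizing, $[v]^+$ has product structure, and because it is open, Proposition~\ref{prop:open-hol} (together with its stable analogue) provides uniformly continuous holonomies for both $\mmu_x$ and $\ms_x$ on the $\LLL$-invariant family $\RRR$ of open rectangles. The substantive tasks are then (a) verifying $\uQ < \infty$ and positivity of leaf measures on $R$; (b) proving $\int \tau\,d\lambda_R < \infty$; and (c) establishing uniqueness and the Gibbs inequality \eqref{eqn:sync-Gibbs}.

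\textbf{Counting and return time.} For (a), I would stratify $\LLL_n$ according to the occurrences of $v$. Words with no occurrence of $v$ form the language of $Y_v$ and contribute at most $C\,e^{nP(Y_v,\ph)} = o(e^{nP})$ by \eqref{eqn:sync-assumption}. Words that do contain $v$ admit a canonical factorization $w = u_0 r_1 \cdots r_k u_{k+1}$ in which $u_0,u_{k+1}$ are $Y_v$-words and each $r_i$ begins with $v$ and contains exactly one further copy of $v$; the synchronizing property permits free concatenation of such factors, while the Bowen inequality \eqref{eqn:almost-add} converts $\Phi(w)$ into the sum of the $\Phi$'s of the pieces up to bounded error. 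A renewal-theoretic estimate on the corresponding generating function, whose convergence at $z = e^{-P}$ is guaranteed by the pressure gap, then yields $\uQ < \infty$. The same decomposition applied to $\Lambda_n^+(x)$ for $x\in R$ shows $\lQ^+_x \geq c > 0$, so Theorem~\ref{thm:finite} gives uniform positivity of leaf measures on $R$, and Theorem~\ref{thm:q-indep} produces a nonzero $\lambda_R$. For (b), the set $\{\tau > n\} \cap R$ is covered by cylinders $[vw]^+$ with $w$ a $Y_v$-word of length $\geq n - |v|$; Corollary~\ref{cor:lambda-Gibbs} combined with the $Y_v$-partition-sum bound gives $\lambda_R(\{\tau > n\}) \leq K' e^{-n(P - P(Y_v,\ph))}$, which is summable.

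\textbf{Uniqueness and Gibbs bound.} With (a) and (b) in hand, Theorem~\ref{thm:push-product} yields a $\sigma$-invariant measure $\mu$ whose normalization is an equilibrium measure with local product structure, giving (2), (3), and the existence halves of (1) and (4). For uniqueness, any ergodic equilibrium $\nu$ must satisfy $\nu([v]^+) > 0$: otherwise $\mathrm{supp}(\nu) \subset Y_v$ and the variational principle forces $h_\nu + \int \ph\,d\nu \leq P(Y_v,\ph) < P$. Once $\nu([v]^+) > 0$, I would apply Ledrappier--Young-style reasoning to show that the conditional measures of $\nu$ on local unstable leaves satisfy the scaling \eqref{eqn:mxu-RN}; combined with the holonomy identity of Theorem~\ref{thm:shift-cts} and the Walters regularity of $\ph$, this characterizes the unstable conditionals up to a scalar as $\mmu_x$. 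The analogous statement for stable leaves plus Fubini identifies $\nu|_R$ with a scalar multiple of $\lambda_R$, and ergodicity then forces $\nu = \mu/\mu(X)$. For (5), the upper Gibbs bound is a direct application of Corollary~\ref{cor:lambda-Gibbs}; for the lower bound, at a point $x \in [w]^+$ Theorem~\ref{thm:gibbs} reduces matters to a uniform positive lower bound on $\uQ^+_{x.wa}$ for admissible continuations $a$, and because $w$ ends with $v$ this is precisely the bound obtained in the counting step. The main obstacle will be the uniqueness argument, which requires pushing the leaf-conditional characterization through using only the Walters regularity of $\ph$ rather than the stronger Dini or H\"older assumptions under which such arguments are usually carried out.
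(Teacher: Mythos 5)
Your construction of $\lambda_R$ on $R=[v]^+$, the verification that $\int\tau\,d\lambda_R<\infty$ by covering $\{\tau>n\}$ with cylinders $[vu]^+$, $u$ a $Y_v$-word, and the application of Theorem \ref{thm:push-product} and Corollary \ref{cor:lambda-Gibbs} all match the paper's Steps 3--5. The genuine gap is in part (1), uniqueness, and it propagates to part (4). The paper does \emph{not} characterize the unstable conditionals of an arbitrary ergodic equilibrium state; instead it shows that the collection $\GGG=\LLL\cap v\LLL\cap\LLL v$ of words beginning and ending with $v$ has the specification property (this requires Lemma \ref{lem:sync-spec}, whose proof is a nontrivial counting argument using \eqref{eqn:summable} to produce even one connector $u$ with $vuv\in\LLL$ --- a point your proposal silently assumes), and then invokes Theorem \ref{thm:CT-PYY}, the symbolic version of the Climenhaga--Thompson/Pacifico--Yang--Yang machinery for the decomposition $\BBB\GGG\BBB$, to obtain uniqueness \emph{and} $\uQ<\infty$ in one stroke. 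Your alternative --- ``Ledrappier--Young-style reasoning'' to show that the unstable conditionals of any equilibrium state $\nu$ with $\nu([v]^+)>0$ satisfy the scaling \eqref{eqn:mxu-RN} and hence coincide with $\mmu_x$ up to scalars --- is not an argument but a pointer to a theorem that is not available in this generality (non-SFT shift, Walters potential, measurable-partition constructions on leaves) and that the paper deliberately avoids. Since Theorem \ref{thm:push-product} only produces \emph{an} equilibrium measure, without closing this gap you obtain existence of an equilibrium measure with local product structure but neither (1) nor (4).

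A secondary concern is your renewal-theoretic derivation of $\uQ<\infty$. The factorization $w=u_0r_1\cdots r_ku_{k+1}$ and the pressure gap do give $\sum_n\Lambda_n(\BBB)e^{-nP}<\infty$, but concluding $\Lambda_n\le Qe^{nP}$ from the generating-function identity requires controlling the return-word generating function at $z=e^{-P}$ (showing it does not exceed $1$ there, or running a discrete renewal theorem with the almost-additivity error from \eqref{eqn:almost-add}); this is essentially equivalent to the counting bound you are trying to prove and is where the work lies. The paper's route through Theorem \ref{thm:CT-PYY} (proved via a suspension flow in Appendix B) sidesteps this. Your lower bound $\lQ_x^+\ge c$ for $x\in[v]^+$ also needs the connector word from Lemma \ref{lem:sync-spec} to build points of $\Wul(x)\cap[w]^+$ for $w\in\GGG$; compare the paper's use of $\uQ(\GGG)>0$ from Proposition \ref{prop:lambda-snyc-pos}. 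The treatment of part (5) is otherwise consistent with the paper's Step 4.
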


\begin{remark}\label{rmk:spec}
To deduce Theorem \ref{thm:spec} from Theorem \ref{thm:sync}, it suffices to observe that if $X$ is a shift space with the specification property, then it has a synchronizing word \cite{aB88}, and \eqref{eqn:sync-assumption} holds for every potential with the Bowen property.  This last assertion can be deduced using the fact that $(X,\ph)$ has a unique equilibrium measure $\mu$, and that $\mu$ is fully supported \cite{rB745}, so that in particular $\mu([v]^+)>0$ and therefore $\mu(Y_v)=0$. 
By upper semi-continuity of entropy, $(Y_v,\ph)$ has an equilibrium measure $\nu \neq \mu$, for which $P(Y_v,\ph) = h(\nu) + \int \ph\,d\nu < P(X,\ph)$. 
\end{remark}

The remainder of this section is devoted to the proof of Theorem \ref{thm:sync}, which will proceed along the following lines.
\begin{enumerate}[wide=0pt, leftmargin=*, itemsep=1ex, label={\textsc{Step} \arabic{*}:}]
\item Use the synchronizing property together with \eqref{eqn:sync-assumption} to prove that the set $\GGG \subset \LLL$ of words that start and end with $v$ has the specification property.
\item Use this specification property and \eqref{eqn:sync-assumption} to apply results from \cite{CT13,PYY22}, deducing uniqueness and uniform counting bounds on partition sums.
\item Use these counting bounds together with Theorem \ref{thm:finite} to obtain lower bounds on $\mmu_x$ and $\ms_x$ whenever $x\in [v]^+$, thus deducing that $\lambda([v]^+)>0$.
\item Use \eqref{eqn:sync-assumption}  and the Gibbs bounds from \S\ref{sec:gibbs} to show that $\int \tau\,d\lambda<\infty$ and to deduce the Gibbs bounds in \eqref{eqn:sync-Gibbs} for every $w\in \GGG$.
\item Apply Theorem \ref{thm:push-product} to deduce that $\mu/\mu(X)$ is an equilibrium measure.
\end{enumerate}

Before carrying these out, we set up some notation and recall a lemma. Recalling the notation $\Lambda_n=\sum_{|w|=n}e^{\Phi(w)}$ from \eqref{eqn:Lambda}, given $\DDD\subset \LLL$ we write
\[
\Lambda_n(\DDD)=\sum_{w\in\DDD_n}e^{\Phi(w)}
\quad\text{and}\quad
Q_n(\DDD) = \Lambda_n(\DDD) e^{-nP(X,\ph)}.
\]
A collection $\DDD \subset \LLL$ is called \emph{factorial} if it is closed under passing to subwords. The subshift associated to a factorial collection $\DDD$ is
\[
X_\DDD := \{x\in X : \text{every subword of $x$ is in }\DDD\}.
\]
\begin{lemma}\label{lem:factorial}
If $\DDD$ is factorial and $\ph$ is continuous, then $\frac 1n \log \Lambda_n(\DDD) \to P(X_\DDD,\ph)$.
\end{lemma}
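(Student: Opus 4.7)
The plan is to establish $\liminf_n \tfrac{1}{n}\log\Lambda_n(\DDD) \ge P(X_\DDD,\ph)$ and $\limsup_n \tfrac{1}{n}\log\Lambda_n(\DDD) \le P(X_\DDD,\ph)$ separately. Let $\LLL(X_\DDD)$ denote the language of the subshift $X_\DDD$. The lower bound requires no use of factoriality: by definition of $X_\DDD$, every word in $\LLL_n(X_\DDD)$ lies in $\DDD_n$, and $X_\DDD \subset X$ forces $\Phi(w) = \sup_{[w]^+} S_n\ph \ge \sup_{[w]^+\cap X_\DDD} S_n\ph =: \Phi^{X_\DDD}(w)$. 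Hence $\Lambda_n(\DDD) \ge \sum_{w \in \LLL_n(X_\DDD)} e^{\Phi^{X_\DDD}(w)}$, and the $\tfrac{1}{n}\log$ of the right side tends to $P(X_\DDD,\ph)$ by \eqref{eqn:P} applied to the subsystem $(X_\DDD,\sigma,\ph|_{X_\DDD})$.

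For the upper bound I would mimic the standard proof of the variational principle. For each $w \in \DDD_n$, choose $x_w \in [w]^+$ attaining $S_n\ph(x_w) = \Phi(w)$ (the supremum is attained by compactness), set $\nu_n := \Lambda_n(\DDD)^{-1} \sum_{w \in \DDD_n} e^{\Phi(w)}\delta_{x_w}$, and form the time-averaged measure $\mu_n := \tfrac{1}{n}\sum_{k=0}^{n-1}\sigma_*^k\nu_n$. By weak-$*$ compactness extract a subsequence $n_j$ along which $\mu_{n_j} \rightharpoonup \mu$ (a $\sigma$-invariant Borel probability on $X$) and $\tfrac{1}{n_j}\log\Lambda_{n_j}(\DDD) \to \limsup_n \tfrac{1}{n}\log\Lambda_n(\DDD)$. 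Factoriality of $\DDD$ enters at one crucial spot, namely in showing $\mu(X_\DDD) = 1$: for any $v \in \LLL\setminus\DDD$ of length $m$ and any $0 \le k \le n-m$, the relation $\sigma^k x_w \in [v]^+$ would force $v = w_{[k,k+m)} \in \DDD$ by factoriality, a contradiction; hence only the boundary indices $k \in (n-m, n)$ can contribute, giving $\mu_n([v]^+) \le (m-1)/n \to 0$, and a countable union over $\LLL\setminus\DDD$ together with shift-invariance of $\mu$ yields $\mu(X\setminus X_\DDD)=0$.

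The proof concludes with the usual entropy bookkeeping. Letting $\xi_q$ denote the partition of $X$ into $q$-cylinders, the exact identity $\tfrac{1}{n}H_{\nu_n}(\xi_n) + \int\ph\,d\mu_n = \tfrac{1}{n}\log\Lambda_n(\DDD)$ (valid because $\nu_n$ places distinct mass on distinct $n$-cylinders $[w]^+$) combines with the Misiurewicz-style estimate $\tfrac{1}{n}H_{\nu_n}(\xi_n) \le \tfrac{1}{q}H_{\mu_n}(\xi_q) + O(q/n)$ to yield, upon letting $n=n_j\to\infty$ (using weak continuity of $\mu \mapsto H_\mu(\xi_q)$, which holds because shift-cylinders are clopen) and then $q\to\infty$, the bound $h_\mu(\sigma) + \int\ph\,d\mu \ge \limsup_n \tfrac{1}{n}\log\Lambda_n(\DDD)$. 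Since $\mu \in M_\sigma(X_\DDD)$, the variational principle for $(X_\DDD,\ph)$ bounds the left side above by $P(X_\DDD,\ph)$, closing the loop. The main obstacle will be this last entropy-comparison step --- the Misiurewicz bound plus the ordering $n \to \infty$ before $q \to \infty$, which is classical but notationally delicate. Everything else is essentially bookkeeping, with factoriality playing its role only at the observation $\mu(X\setminus X_\DDD) = 0$.
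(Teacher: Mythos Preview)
Your argument is correct, but it takes a genuinely different route from the paper's.  The paper gives a purely combinatorial proof in Appendix~\ref{app:pressure}: it first uses factoriality to show that $\Lambda_n(\DDD)$ is submultiplicative, so Fekete's lemma guarantees that the limit $P(\DDD) = \lim_n \tfrac{1}{n}\log\Lambda_n(\DDD)$ exists and equals the infimum.  For the nontrivial inequality $P(\DDD) \le P(X_\DDD,\ph)$, the paper introduces the ``extendable'' collections
\[
\DDD^{(j)} = \{w\in\DDD : uwv\in\DDD \text{ for some } u,v\in\DDD_j\},
\]
observes that $\DDD_{N+2j}\subset \DDD_j\DDD_N^{(j)}\DDD_j$, and uses submultiplicativity to deduce $P(\DDD) \le \tfrac{1}{n}\log\Lambda_n(\DDD^{(j)})$ for all $n$ and all large $j$.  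Since for each fixed $n$ the sets $\DDD_n^{(j)}$ decrease (as $j\to\infty$) to $\LLL_n(X_\DDD)$, one obtains $P(\DDD)\le \tfrac{1}{n}\log\Lambda_n(\LLL(X_\DDD))$ for every $n$, and the result follows.

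Your approach instead uses measure theory: the Misiurewicz construction of an invariant measure $\mu$ as a weak-$*$ limit of averaged atomic measures, with factoriality entering only to force $\mu(X_\DDD)=1$, and the variational principle on $X_\DDD$ to close the loop.  The paper's argument is more elementary and self-contained (no ergodic theory, no variational principle), and it gives the intermediate inequality $P(\DDD)\le\tfrac{1}{n}\log\Lambda_n(\LLL(X_\DDD))$ for \emph{every} $n$, which is slightly sharper than a $\limsup$ bound.  Your argument is more conceptual and, as a byproduct, produces a measure on $X_\DDD$ whose free energy witnesses the pressure --- a fact the combinatorial proof does not supply.  Both are standard techniques; the paper's choice reflects its preference for staying close to elementary counting arguments.
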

\begin{proof}
For $\ph=0$, this is \cite[Lemma 2.7]{CP19}; the proof in \S4.4 of that paper extends to arbitrary continuous $\ph$ with routine modifications; see Appendix \ref{app:pressure}.
\end{proof}

We will apply Lemma \ref{lem:factorial} to the factorial collection
\[
\BBB := \{w\in \LLL : w \text{ does not contain }v\},
\]
for which we have $X_\BBB = Y_v$, so that the condition $P(Y_v,\ph) < P(X,\ph)$ together with Lemma \ref{lem:factorial} guarantees that
\begin{equation}\label{eqn:PB-gap}
\lim_{n\to\infty} \frac 1n \log \Lambda_n(\BBB) < P(X,\ph).
\end{equation}
We will often use the following consequence of this:
\begin{equation}\label{eqn:summable}
\sum_{n=1}^\infty Q_n(\BBB)
= \sum_{n=1}^\infty \Lambda_n(\BBB) e^{-nP(X,\ph)}
<\infty.
\end{equation}
With these preliminaries set up, we turn our attention to Step 1 of the proof of Theorem \ref{thm:sync}. We need the following lemma.

\begin{lemma}\label{lem:sync-spec}
	If equation \eqref{eqn:summable} holds, then there exists $u\in\LLL$ such that $vuv\in\LLL$.
\end{lemma}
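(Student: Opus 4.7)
The plan is to argue by contradiction: assume that no word $u\in\LLL$ satisfies $vuv\in\LLL$ (allowing $u$ to be empty, so in particular $vv\notin\LLL$), and derive a contradiction with the lower bound $\Lambda_n \geq e^{nP(X,\ph)}$ from \eqref{eqn:Ln-geq}.

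Under the contradiction hypothesis, if $v$ appears in a word $w\in\LLL$ at positions $i_1<\cdots<i_k$, then we must have $i_{l+1}-i_l<|v|$ for every $l$ (otherwise $w_{[i_l,i_{l+1}+|v|)}$ would realize some $vuv\in\LLL$). Iterating, all occurrences of $v$ in $w$ lie in a window $[i_1,\,i_k+|v|)$ of length at most $2|v|-1$. Consequently, every $w\in\LLL_n$ admits one of two forms: either $w\in\BBB_n$ (no occurrence of $v$), or $w=\alpha\gamma\beta$ where $|\gamma|\leq 2|v|-1$ is a word containing at least one occurrence of $v$, and $\alpha,\beta\in\BBB$ are the (possibly empty) prefix/suffix preceding/following the $v$-window, which cannot contain $v$ by the maximality of $i_1, i_k$.

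Using only the trivial subadditivity $\Phi(\alpha\gamma\beta)\leq\Phi(\alpha)+\Phi(\gamma)+\Phi(\beta)$ coming from \eqref{eqn:Phi}, this decomposition yields
\[
\Lambda_n \leq \Lambda_n(\BBB) + \sum_{\gamma\in G} e^{\Phi(\gamma)} \sum_{i+j=n-|\gamma|} \Lambda_i(\BBB)\,\Lambda_j(\BBB),
\]
where $G$ is the finite set of admissible $\gamma$'s. Dividing by $e^{nP(X,\ph)}$ and writing $C := \sum_{\gamma\in G} e^{\Phi(\gamma)-|\gamma|P(X,\ph)}<\infty$,
\[
Q_n \leq Q_n(\BBB) + C \max_{\gamma\in G}\sum_{i+j=n-|\gamma|} Q_i(\BBB)\,Q_j(\BBB).
\]

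The finishing step is to observe that the right-hand side tends to $0$ as $n\to\infty$. By \eqref{eqn:summable} the sequence $(Q_n(\BBB))_{n\geq 1}$ is in $\ell^1$ with nonnegative terms, hence is bounded and satisfies $Q_n(\BBB)\to 0$; a standard splitting argument then shows that the convolution $\sum_{i+j=m} Q_i(\BBB) Q_j(\BBB)$ goes to $0$ as $m\to\infty$. This forces $Q_n\to 0$, contradicting $Q_n \geq 1$ from \eqref{eqn:Ln-geq}. The main point to get right is the combinatorial decomposition of words with tightly-clustered occurrences of $v$; once that is in hand, the rest is a direct application of \eqref{eqn:summable} and \eqref{eqn:Ln-geq}.
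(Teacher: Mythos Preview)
Your approach is essentially the same as the paper's: both decompose words as $\BBB\cdot(\text{bounded middle})\cdot\BBB$, bound $\Lambda_n$ by a convolution of the $Q_i(\BBB)$, and use summability to contradict $Q_n\geq 1$. The paper takes the middle piece to be a single copy of $v$ (working with the set $\DDD=\BBB\cup(\BBB v\BBB\cap\LLL)$ of words without two disjoint copies of $v$), while you take a window of length $\leq 2|v|-1$; this is a cosmetic difference.

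There is one small slip in your combinatorics: ``iterating'' the inequality $i_{l+1}-i_l<|v|$ only yields $i_k-i_1<(k-1)|v|$, which does not bound the window by $2|v|-1$. The correct argument is to apply your own observation directly to the pair $(i_1,i_k)$: if $i_k-i_1\geq|v|$, then $w_{[i_1,\,i_k+|v|)}=v\cdot w_{[i_1+|v|,\,i_k)}\cdot v$ realizes some $vuv\in\LLL$. Hence $i_k-i_1\leq|v|-1$ and the window has length $\leq 2|v|-1$ as you claimed. With this one-line fix the proof is complete.
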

\begin{proof}
Let $\DDD = \BBB \cup ((\BBB v \BBB) \cap \LLL)$ be the set of words in $\LLL$ that do not contain two disjoint copies of $v$. Then we have
\begin{equation}\label{eqn:LnD-leq}
\Lambda_n(\DDD) \leq \Lambda_n(\BBB)+ e^{\Phi(v)}\sum_{i=0}^{n-|v|} \Lambda_i(\BBB) \Lambda_{n-i-|v|}(\BBB).
\end{equation}
Recall also from \S\ref{sec:finite} that $\Lambda_n = \Lambda_n(\LLL) \geq e^{nP}$, so \eqref{eqn:LnD-leq} gives
\begin{equation}\label{eqn:LnLn}
\frac{\Lambda_n(\DDD)}{\Lambda_n(\LLL)} \leq \Lambda_n(\DDD) e^{-nP}
\leq Q_n(\BBB)+e^{\Phi(v)-|v|P}
\sum_{i=0}^{n-|v|} 
Q_i(\BBB)Q_{n-|v|-i}(\BBB).
\end{equation}
Writing $c_i = Q_i(\BBB)$, observe that \eqref{eqn:summable} gives
\begin{equation}\label{eqn:ci-trick}
\sum_{\ell=0}^\infty \sum_{i=0}^\ell c_i c_{\ell-i} = \Big( \sum_{k=0}^\infty c_k \Big)^2 < \infty,
\end{equation}
so $\sum_{i=0}^\ell c_i c_{\ell-i} \to 0$ as $\ell\to \infty$.  Thus the right-hand side of \eqref{eqn:LnLn} converges to $0$ as $n\to\infty$. For sufficiently large $n$ this gives $\Lambda_n(\DDD) < \Lambda_n(\LLL)$, so $\DDD \neq \LLL$, which proves Lemma \ref{lem:sync-spec}.
\end{proof}

\begin{proposition}\label{prop:decomp-spec}
Writing $\GGG = \LLL \cap v\LLL \cap \LLL v$ for the set of words in $\LLL$ that both start and end with $v$, the following are true whenever \eqref{eqn:summable} holds.
\begin{itemize}
\item $\BBB\GGG\BBB$ is a decomposition of the language $\LLL$: for every $w\in \LLL$ there are $u^p, u^s \in \BBB$ and $u^g \in \GGG$ such that $w = u^p u^g u^s$.
\item $\GGG$ has the specification property: there is $t\in \NN$ such that for every $w,w'\in \GGG$, there is $u\in \LLL_t$ such that $wuw'\in \GGG$.
\end{itemize}
\end{proposition}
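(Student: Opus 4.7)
The plan is to handle the two assertions in turn. The first is essentially bookkeeping, while the second is where Lemma \ref{lem:sync-spec} does the real work.

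For the decomposition $\LLL = \BBB\GGG\BBB$, given $w \in \LLL$: if $w$ contains no copy of $v$, then $w \in \BBB$ and I take $u^p = w$ with $u^g = u^s = \emptyset$, adopting the convention (standard for decomposition arguments of this type) that the empty word is permitted in any of the three pieces. Otherwise, let $i$ be the smallest index at which $v$ begins in $w$ and let $j$ be the largest such index; set
\[
u^p = w_{[1, i-1]}, \qquad u^g = w_{[i,\, j + |v| - 1]}, \qquad u^s = w_{[j + |v|,\, |w|]}.
\]
Then $u^g$ starts with $v$ at position $i$ and ends with $v$ at position $j + |v| - 1$, so $u^g \in \GGG$. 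Any occurrence of $v$ inside $u^p$ would begin at an index strictly less than $i$, contradicting the minimality of $i$; a symmetric argument applies to $u^s$. Hence $u^p, u^s \in \BBB$.

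For the specification property, apply Lemma \ref{lem:sync-spec} to obtain $u_0 \in \LLL$ with $v u_0 v \in \LLL$, and set $t = |u_0|$. Given $w, w' \in \GGG$, write $w = \tilde w v$ and $w' = v \tilde w'$. The synchronizing property of $v$, applied first to $\tilde w v \in \LLL$ and $v u_0 v \in \LLL$, yields $w u_0 v = \tilde w v u_0 v \in \LLL$. Applying it again to $w u_0 v$ (which ends in $v$) together with $w' = v \tilde w' \in \LLL$ produces $w u_0 w' \in \LLL$. This word inherits the prefix $v$ from $w$ and the suffix $v$ from $w'$, so $w u_0 w' \in \GGG$, establishing specification with gap $t$.

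The substantive content of the proposition actually lives in Lemma \ref{lem:sync-spec}, whose proof requires the summability \eqref{eqn:summable} deduced from the pressure gap $P(Y_v,\ph) < P(X,\ph)$. Once a round-trip word $u_0$ is available, both the decomposition and the specification statements reduce to direct applications of the synchronizing property, and I do not anticipate any further obstacle.
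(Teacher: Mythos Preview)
Your proof is correct and matches the paper's argument essentially line for line: both handle the decomposition by locating the first and last occurrences of $v$ (with the empty-word convention when $v$ does not appear), and both establish specification by invoking Lemma \ref{lem:sync-spec} to obtain a connecting word $u_0$ and then applying the synchronizing property twice.
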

\begin{proof}
For the first claim, if $w$ does not contain $v$ then we take $u^p = w$ and set $u^g$ and $u^s$ to be the empty word. If $w$ does contain $v$, then we take $u^p$ to be the initial segment of $w$ preceding the first appearance of $v$, we take $u^s$ to be the final segment of $w$ following the last appearance of $v$, and we take $u^g$ to be the subword of $w$ lying between $u^p$ and $u^s$.

For the second claim, let $u\in \LLL$ be provided by Lemma \ref{lem:sync-spec} and let $t = |u|$. Then given any $w,w'\in \GGG$, we claim that $wuw'\in \GGG$. Indeed, by the definition of $\GGG$ we have $w = qv$ and $w' = vq'$ for some $q,q'\in \LLL$, and thus $wuw' = qvuvq'$. By the synchronizing property, since $qv\in \LLL$ and $vuv\in \LLL$ we deduce that $qvuv\in \LLL$, and since $vq'\in \LLL$ we apply the property again to get $wuw' = qvuvq'\in \LLL$. This word starts and ends with $v$ since $w$ and $w'$ do, so it lies in $\GGG$.
\end{proof}

Now we are in a position to carry out Step 2 and obtain uniqueness and uniform counting bounds. These are provided by the following result.

\begin{theorem}\label{thm:CT-PYY}
Suppose $X$ is a shift space on a finite alphabet with language $\LLL$, and that $\ph\colon X\to \RR$ is continuous. Suppose moreover that there are $\CCC^p, \GGG, \CCC^s \subset \LLL$ such that the following conditions hold.
\begin{enumerate}[label=\upshape{(\Roman{*})}]
\item\label{spec} Specification on $\GGG$: there is $t\in\NN$ such that for all $w^1,w^2,\dots,w^n\in \GGG$, there are $u^1,\dots,u^{n-1}\in \LLL_{t}$ such that $w^1 u^1 w^2 u^2 \cdots u^{n-1} w^n\in\LLL$.
\item\label{Bow} Bowen property on $\GGG$: there is $C>0$ such that for every $w\in \GGG$ and $x,y \in [w]$, we have $|S_{|w|}\ph(x) - S_{|w|}\ph(y)| \leq C$.
\item\label{gap} $\ulim_{n\to\infty} \frac 1n \log \Lambda_n(\CCC^p \cup \CCC^s) < P(\ph)$.
\end{enumerate}
Then $X$ has a unique equilibrium measure, and $\uQ<\infty$.
\end{theorem}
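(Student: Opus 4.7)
The plan is to recognize Theorem \ref{thm:CT-PYY} as a combination of two prior results: the Climenhaga--Thompson uniqueness machinery \cite{CT13} (for uniqueness of the equilibrium measure) and the refined counting estimates of \cite{PYY22} (for $\uQ<\infty$), under the implicit hypothesis that $\LLL \subset \CCC^p \cdot \GGG \cdot \CCC^s$ which is the standard setting of the CT framework (and which holds automatically in our application by Proposition \ref{prop:decomp-spec}).

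For uniqueness, I would invoke \cite{CT13} directly. The specification property (I) and Bowen property (II) on $\GGG$, combined with \eqref{eqn:almost-add}, force a Bowen-type counting estimate for $\Lambda_n(\GGG)$; the pressure gap (III) then guarantees that the pressure is ``concentrated'' on the part of the language that contains long $\GGG$-subwords. A classical Gibbs-measure argument then shows that any equilibrium measure must satisfy a weak Gibbs bound on cylinders $[w]^+$ for $w\in\GGG$, from which uniqueness follows via mutual absolute continuity of any two equilibrium measures.

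For the bound $\uQ<\infty$, the plan is to use the decomposition to estimate
\begin{equation*}
\Lambda_n \leq C_0 \sum_{i+j+k=n} \Lambda_i(\CCC^p)\, \Lambda_j(\GGG)\, \Lambda_k(\CCC^s),
\end{equation*}
where $C_0$ absorbs the Bowen constant from \eqref{eqn:almost-add}. The pressure gap (III) supplies the summability $\sum_i Q_i(\CCC^p)<\infty$ and $\sum_k Q_k(\CCC^s)<\infty$, while (I)+(II) on $\GGG$ give a uniform upper bound $\Lambda_j(\GGG) \leq C e^{jP}$. Multiplying by $e^{-nP}$ and applying the convolution bound yields $Q_n \leq C'\bigl(\sum_i Q_i(\CCC^p)\bigr)\bigl(\sum_k Q_k(\CCC^s)\bigr) < \infty$, so $\uQ<\infty$.

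The main obstacle is obtaining the sharp upper bound $\Lambda_j(\GGG) \leq C e^{jP}$ without any polynomial correction factor. Specification on $\GGG$ easily gives the matching lower bound via concatenation through the gap word $u\in\LLL_t$, but the upper bound on $\Lambda_j(\GGG)$ (as opposed to $\Lambda_j(\LLL)$) is more delicate: one cannot directly run the standard Bowen covering argument, since not every long word in $\LLL$ decomposes purely into $\GGG$-pieces, and the decomposition from $\CCC^p \GGG \CCC^s$ is not canonical. The refinement of \cite{PYY22} is designed precisely to handle this, improving over the polynomial upper bound implicit in \cite{CT13}; adapting that argument (or citing it as a black box) is what drives the conclusion $\uQ<\infty$.
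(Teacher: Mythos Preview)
Your proposal has a genuine gap in the uniqueness step. You propose to invoke \cite{CT13} directly, but the hypotheses of \cite[Theorem~C]{CT13} are strictly stronger than those of Theorem~\ref{thm:CT-PYY}: in \cite{CT13}, specification is required not only on $\GGG$ but on the enlarged collections $\GGG(M) = \{u^p w u^s \in \LLL : u^p \in \CCC^p,\ u^s \in \CCC^s,\ w\in\GGG,\ |u^{p,s}|\leq M\}$ for every $M$. Condition~\ref{spec} here gives specification only on $\GGG$ itself, and nothing in your argument upgrades this to $\GGG(M)$. This is precisely the improvement that \cite{PYY22} supplies---but \cite{PYY22} is stated for continuous flows, not shift maps, so it cannot be cited as a black box for the symbolic setting either. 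The same issue undermines your plan for $\uQ<\infty$: the bound $\Lambda_j(\GGG)\leq Ce^{jP}$ that you flag as the ``main obstacle'' is exactly what \cite{PYY22} would provide, and you have not indicated how to extract it in discrete time.

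The paper resolves both issues simultaneously by passing to the suspension flow $\hat X = X \times [0,1]/{\sim}$ with constant roof function $1$, lifting $\ph$ to $\hat\ph$ and the decomposition $(\CCC^p,\GGG,\CCC^s)$ to a decomposition $(\hat\PPP,\hat\GGG,\hat\SSS)$ of flow orbit segments. One then checks directly that the suspension flow satisfies the hypotheses of \cite[Theorem~A]{PYY22}: expansivity at a suitable scale, specification on $\hat\GGG$, the Bowen property on $\hat\GGG$, and the pressure gap for $[\hat\PPP]\cup[\hat\SSS]$. The flow result yields a unique equilibrium measure for $(\hat X,\hat\ph)$ together with the uniform bound $\varlimsup_T \Lambda(\hat X,\hat\ph,2\gamma,2\gamma,T)e^{-TP}<\infty$; these transfer back to $(X,\ph)$ via the natural bijection between $\sigma$-invariant and flow-invariant probability measures (which preserves entropy and integrals) and a comparison of separated sets in $\hat X$ with cylinders in $X$. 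Thus the paper obtains uniqueness and $\uQ<\infty$ in one stroke from the flow theorem, rather than by the direct symbolic convolution argument you outline.
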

\begin{proof}
This is very nearly the form of \cite[Theorem C]{CT13}, except that there, the specification property is also required to hold for the sets $\GGG(M)=\{u^pwu^s\in\LLL : u^p \in \CCC^p$, $u^s \in \CCC^s$, $w\in\GGG$, $|u^{p,s}|\leq M\}$. An analogous result for flows was proved in \cite{CT16}, and was later improved by Pacifico, Yang, and Yang \cite{PYY22}, who showed that it suffices to have specification for $\GGG$ itself. Their arguments can be readily adapted to the symbolic setting, see \cite{blog}, but since the precise statement given in Theorem \ref{thm:CT-PYY} cannot be directly read off from existing results in the literature, we provide in Appendix \ref{app:unique} a proof of Theorem \ref{thm:CT-PYY} as a logical consequence of \cite[Theorem A]{PYY22} by using suspension flows.
\end{proof}

In our setting, we have $\CCC^p = \CCC^s = \BBB$, and then Proposition \ref{prop:decomp-spec} provides Condition \ref{spec}, the Walters property implies Condition \ref{Bow}, and \eqref{eqn:sync-assumption} provides Condition \ref{gap}, so Theorem \ref{thm:CT-PYY} applies and we can conclude that there is a unique equilibrium measure, and that $\uQ<\infty$.

We will also need uniform lower counting bounds on $\GGG$; these follow from similar arguments to those in \cite{CT13}, but cannot be deduced directly from what is written there, so we provide a proof.

\begin{proposition}\label{prop:lambda-snyc-pos}
If equation \eqref{eqn:summable} holds, then $\uQ(\GGG) := \ulim_{n\to\infty} Q_n(\GGG) > 0$.
\end{proposition}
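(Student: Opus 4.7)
The plan is to argue by contradiction: suppose $Q_n(\GGG)\to 0$ and derive a conflict with the universal lower bound $\Lambda_n\geq e^{nP}$ from \eqref{eqn:Ln-geq}. The key input is the decomposition $\LLL = \BBB \cup \BBB\GGG\BBB$ from Proposition \ref{prop:decomp-spec}: every word in $\LLL$ either lies in $\BBB$ or decomposes uniquely as $u^p u^g u^s$ with $u^p,u^s\in \BBB$ and $u^g\in \GGG$ (take $u^p$ to be the prefix preceding the first occurrence of $v$, $u^s$ the suffix following the last occurrence, and $u^g$ the segment in between). Combining this with the subadditivity bound $\Phi(u^p u^g u^s)\leq \Phi(u^p)+\Phi(u^g)+\Phi(u^s)$ from \eqref{eqn:almost-add} (which applies because Walters implies Bowen) yields
\[
\Lambda_n \;\leq\; \Lambda_n(\BBB) \;+\; \sum_{\substack{i+j+k=n\\ j\geq |v|}} \Lambda_i(\BBB)\,\Lambda_j(\GGG)\,\Lambda_k(\BBB).
\]
Dividing by $e^{nP}$ and writing $a_j := Q_j(\GGG)$, $b_j := Q_j(\BBB)$, and $(b\star b)_\ell := \sum_{i+k=\ell} b_i b_k$, this becomes
\[
1 \;\leq\; b_n \;+\; \sum_{j\geq |v|} a_j\, (b\star b)_{n-j}.
\]

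The remaining step is purely analytic. Hypothesis \eqref{eqn:summable} gives $\sum_j b_j <\infty$, so $b_n\to 0$ and $\sum_\ell (b\star b)_\ell = \bigl(\sum_j b_j\bigr)^2 < \infty$, forcing $(b\star b)_\ell \to 0$. Since Theorem \ref{thm:CT-PYY} has already been invoked above to give $\uQ<\infty$, the sequence $\{a_j\}$ is uniformly bounded. If, contrary to the desired conclusion, $a_j\to 0$, then fixing $\eps>0$ and $N$ with $a_j<\eps$ for all $j>N$, the head $\sum_{j\leq N} a_j(b\star b)_{n-j}$ is a finite sum of terms vanishing as $n\to\infty$, while the tail is at most $\eps \sum_\ell (b\star b)_\ell$. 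Hence $\varlimsup_n \sum_j a_j (b\star b)_{n-j} \leq \eps \bigl(\sum_j b_j\bigr)^2$, and sending $\eps\to 0$ this $\varlimsup$ is $0$. Together with $b_n\to 0$, this contradicts the displayed lower bound $1$.

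The main obstacle would have been to produce the upper bound on $\Lambda_n$ in terms of partition sums over $\BBB$ and $\GGG$, but Proposition \ref{prop:decomp-spec} together with the Bowen property does this cleanly. What remains is a routine instance of the principle that a bounded sequence tending to zero convolved against a summable sequence yields a sequence tending to zero; the only bookkeeping subtlety is the constraint $j\geq |v|$ and the extra term $b_n$ on the right, both of which are absorbed by the same argument.
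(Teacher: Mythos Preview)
Your proof is correct and follows essentially the same route as the paper: both use the decomposition $\LLL\subset\BBB\GGG\BBB$ from Proposition~\ref{prop:decomp-spec} to bound $Q_n(\LLL)$ by a convolution of $Q_\cdot(\GGG)$ against the summable sequence $(b\star b)_\ell$, then combine summability from \eqref{eqn:summable} with the bound $\sup_j Q_j(\GGG)\leq\uQ<\infty$ to extract a positive $\varlimsup$ for $Q_n(\GGG)$. The only cosmetic differences are that the paper argues directly (obtaining an explicit lower bound $1/(2SN)$) rather than by contradiction, and that your appeal to the Bowen property is unnecessary since the upper inequality $\Phi(uw)\leq\Phi(u)+\Phi(w)$ in \eqref{eqn:almost-add} holds for every continuous potential.
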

\begin{proof}
Using the decomposition $\BBB\GGG\BBB$ provided by Proposition \ref{prop:decomp-spec}, we have
\begin{equation}\label{eqn:QnL}
Q_n(\LLL) \leq \sum_{i+j+k=n} Q_i(\BBB)Q_j(\GGG)Q_k(\BBB)
 =\sum_{\ell=0}^n Q_{n-\ell}(\GGG) \sum_{i=0}^\ell Q_i(\BBB)Q_{\ell-i}(\BBB).
\end{equation}
Let $R_\ell(\BBB) := \sum_{i=0}^\ell Q_i(\BBB) Q_{\ell-i}(\BBB)$, and observe that just as in \eqref{eqn:ci-trick}, we have
\begin{equation}\label{eqn:Rell}
\sum_{\ell=0}^\infty R_\ell(\BBB) = \Big( \sum_{i=0}^\infty Q_i(\BBB) \Big)^2 < \infty.
\end{equation}
Thus $S := \max_\ell R_\ell(\BBB) < \infty$. Since $\uQ<\infty$, we have $K := \max_j Q_j(\GGG) < \infty$. By \eqref{eqn:Rell}, there is $N\in \NN$ such that $\sum_{\ell=N}^\infty R_\ell(\BBB) < 1/(2K)$, so if $n>N$, using \eqref{eqn:QnL} and recalling \eqref{eqn:Ln-geq}, we obtain
\begin{align*}
1 &\leq Q_n(\LLL) \leq \sum_{\ell=0}^n Q_{n-\ell}(\GGG) R_\ell(\BBB)
\leq \sum_{\ell=0}^{N-1} Q_{n-\ell}(\GGG) S + \sum_{\ell=N}^n K R_\ell(\BBB) \\
&\leq SN \max \{Q_j(\GGG) : n-N < j \leq n \} + K/(2K).
\end{align*}
The last term is equal to $\frac 12$; subtracting it from both sides and dividing by $SN$ gives
\[
\max\{Q_j(\GGG) : n-N < j \leq n \} \geq 1/(2SN).
\]
Since $n > N$ was arbitrary this proves Proposition \ref{prop:lambda-snyc-pos}.
\end{proof}

We have now completed Step 2 of the proof, and proceed to Step 3, the task of obtaining lower bounds on $\mmu_x$ and $\ms_x$ when $x\in [v]^+$. With Proposition \ref{prop:lambda-snyc-pos} in hand, we can obtain these using Theorem \ref{thm:finite}; to show that both of these measures are nonzero, it suffices to show that $\uQ_x^{\pm}>0$ for all $x\in [v]^+$.

Fix $x\in [v]^+$, and let $u\in \LLL$ be as in Lemma \ref{lem:sync-spec} (so $vuv\in \LLL$). 
Given $w\in \GGG$, let $y_w$ be the periodic sequence $\cdots uwu.wuwu\cdots$, and let $z_w = [x,y_w] \in \Wul(x) \cap [w]^+$. Thus
\begin{equation}\label{eqn:Phix}
\Phi^+_x(w) \geq S_{|w|} \ph(z_w) \geq \Phi(w) - C,
\end{equation}
where $C$ is the constant from the Bowen property.
From the definitions of $\Lambda_n^+(x)$ and $\uQ_x^+$ in \eqref{eqn:Lambda+} and \eqref{eqn:Q+} we now have
\[
\uQ_x^+ = \ulim_{n\to\infty} \sum_{w\in \LLL_n} e^{\Phi_x^+(w) - nP}
\geq \ulim_{n\to\infty} \sum_{w\in \GGG_n} e^{\Phi(w) - C - nP} = e^{-C} \uQ(\GGG) > 0.
\]
By Theorem \ref{thm:finite}, this implies that $\mmu_x(\Wul(x)) \geq e^{-C} \uQ(\GGG)/\uQ > 0$. 

The argument for $\ms_x$ is similar: fix $x\in [v]^+$ and given $w\in \GGG$, let $z_w = [y_w,x] \in \Wsl(x) \cap [wu]^-$. In place of \eqref{eqn:Phix} we have
\[
\Phi_x^-(wu) \geq S_{|wu|}\ph(\sigma^{-|wu|}(z_w)) \geq \Phi(wu) - C,
\]
and we get
\begin{align*}
\uQ_x^- &\geq \ulim_{n\to\infty} \sum_{w\in \GGG_n} e^{\Phi(wu) - C - (n+|u|)P} \\
&\geq \ulim_{n\to\infty} \sum_{w\in \GGG_n} e^{\Phi(w) + \Phi(u) - 2C - nP - |u|P} 
\geq e^{\Phi(u) - 2C - |u|P} \uQ(\GGG) > 0.
\end{align*}
This completes Step 3 of the proof; we have now shown that $\lambda([v]^+)=0$.

Proceeding to Step 4, we show integrability of the return time by using \eqref{eqn:summable} and the Gibbs bound \eqref{eqn:lambda-Gibbs} from Corollary \ref{cor:lambda-Gibbs}; note that the constant $K = (\#A) e^{2C} \lQ^2$ appearing there is finite because $\uQ<\infty$.
Writing $Y_n=\{x\in [v]^+:\tau(x)>n\}$, we observe that
\[
Y_n \subset \bigcup_{u\in \BBB_n} [vu]^+
\]
(note that the cylinder is empty if $vu\notin \LLL$),
and thus \eqref{eqn:lambda-Gibbs} gives
\begin{align*}
\lambda(Y_n) &\leq \sum_{u\in \BBB_n} \lambda([vu]^+)
\leq \sum_{u\in \BBB_n} K e^{\Phi(vu) - |vu|P} \\
&\leq \sum_{u\in\BBB_n} K e^{\Phi(v) + \Phi(u) - |v|P - |u|P}
= e^{\Phi(v) - |v|P} Q_n(\BBB).
\end{align*}
Thus the integral of the return time is
\[
\int \tau\,d\lambda = \sum_{n=0}^\infty \lambda(Y_n) \leq e^{\Phi(v) - |v|P} \sum_{n=0}^\infty Q_n(\BBB) < \infty.
\]

Moreover, given any word $w\in \LLL$ that begins and ends with $v$, we have $[w]^+ \subset [v]^+ = R$, so $\mu([w]^+) = \lambda([w]^+)$, and Corollary \ref{cor:lambda-Gibbs} implies the second inequality in \eqref{eqn:sync-Gibbs}. The first inequality in \eqref{eqn:sync-Gibbs} follows from \eqref{eqn:gibbs+} in Theorem \ref{thm:gibbs}, the specification property for $\GGG$ and the Walters property for $\ph$, and the counting bounds in Proposition \ref{prop:lambda-snyc-pos}.  This completes Step 4.

For the completion of the proof, Step 5, it suffices to check the conditions of Theorem \ref{thm:push-product}. We showed in Step 3 that $\lambda$ is positive, in Step 4 that $\int\tau\,d\lambda<\infty$, and in Step 2 that $\uQ<\infty$. Thus Theorem \ref{thm:push-product} applies, showing that the measure $\mu$ defined in \eqref{eqn:es-mu} is positive, finite, $\sigma$-invariant, and its normalization is an equilibrium measure with local product structure. Since we proved earlier that there is a unique equilibrium measure, this completes the proof of Theorem \ref{thm:sync}.

\section{Weights for leaf measures in Theorem \ref{thm:finite}}\label{sec:shift-mxu-pf}

Observe that for any potential $\ph$, if $\bar{\ph}=\ph-P(\ph)$, then $P(\bar{\ph})=0$.  Moreover, under this construction $\ph$ and $\bar{\ph}$ produce the same equilibrium measure as 
\[\Phi^+_x(w)-|w|P(\ph)=\bar\Phi^+_x(w)=\sup\{S_{|w|}\bar\ph(y):y\in[w]^+\cap\Wul(x)\}.\]  
To simplify the notation in the proofs, we will replace $\ph$ with $\bar{\ph}$ and make the following assumption for the remainder of the paper.
\begin{equation}\label{eqn:0-P-assumption}
	\textbf{Standing Assumption: }P=0. 
\end{equation}
For nonzero pressure, the work is similar where an additional term including the pressure would be included.  For example, wherever a $\Phi^{\pm}_x(w)$ or $\ph(x)$ appears, one would need to include $\Phi^{\pm}_x(w)-|w|P$ or $\ph(x)-P$, respectively.

Equations \eqref{eqn:finite-u} and \eqref{eqn:finite-s} are equivalent results.  By proving one result, one can prove the other by reversing the direction of the shift.  In this section we will prove the bounds on the weights of the unstable leaf measure in \eqref{eqn:finite-u}.  

To obtain these bounds, we first need to establish a pair of inequalities, which should be compared to \eqref{eqn:almost-add}. First, recall from \eqref{eqn:x.w} that for $x\in X$ and $w\in\LLL_n$ with $[w]^+ \cap \Wul(x) \neq\emptyset$, we write
$\xminus w=\dots x_{-2} x_{-1} w_1 w_2\dots w_n$,
and define
\[\Phi_{\xminus w}(u)=\sup\{S_{|u|}\ph(y): y\in \Wul(\xminus wu_1)\cap[u]^+\}.\]

\begin{lemma}\label{lem:2-Phi-bounds}  
	For $w,u,v\in\LLL$ such that $wu\in\LLL$
	and $uv\in\LLL$
	\begin{align}
		\Phi^+_{x}(wu)&\leq \Phi^+_{x}(w)+\Phi_{\xminus wu_1}^+(u)\label{eqn:upper-add-mxu}\\
		\Phi^+_{x}(uv)&\geq \Phi^+_{x}(u)+\Phi_{\xminus uv_1}^+(v)-V_x^+(u) \label{eqn:lower-add-mxu}
	\end{align}
	where $V^+_x(u)=\sup_{y,z\in \Wul(x)\cap[u]^+}(S_{| u|}\ph(y)-S_{| u|}\ph(z))$. 
\end{lemma}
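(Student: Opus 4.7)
The approach is to split the Birkhoff sum at the concatenation point and then identify the shifted trajectory with a point in the appropriate local unstable set.

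For \eqref{eqn:upper-add-mxu}, I would fix an arbitrary $y\in\Wul(x)\cap[wu]^+$ and write
\[
S_{|wu|}\ph(y)=S_{|w|}\ph(y)+S_{|u|}\ph(\sigma^{|w|}y).
\]
The first summand is at most $\Phi^+_x(w)$ by definition. For the second, the key observation is that $\sigma^{|w|}y\in\Wul(x.wu_1)\cap[u]^+$: one checks coordinate by coordinate that $(\sigma^{|w|}y)_k=y_{k+|w|}$ takes the value $u_1$ at $k=0$, then $w_{|w|},w_{|w|-1},\ldots,w_1$ for $k=-1,\ldots,-|w|$, and then agrees with $x$ for $k<-|w|$, which is precisely the negative half of $x.wu_1$. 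Hence $S_{|u|}\ph(\sigma^{|w|}y)\le \Phi^+_{x.wu_1}(u)$, and taking the supremum over $y$ yields \eqref{eqn:upper-add-mxu}.

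For \eqref{eqn:lower-add-mxu}, the plan is to manufacture near-optimizers of $\Phi^+_x(uv)$ from near-optimizers of $\Phi^+_{x.uv_1}(v)$. Given any $z\in\Wul(x.uv_1)\cap[v]^+$, I would set $y':=\sigma^{-|u|}z$; an index check symmetric to the previous paragraph shows that $y'\in\Wul(x)\cap[uv]^+$ and $\sigma^{|u|}y'=z$. Splitting the Birkhoff sum again,
\[
S_{|uv|}\ph(y')=S_{|u|}\ph(y')+S_{|v|}\ph(z).
\]
Because $y'\in\Wul(x)\cap[u]^+$, the definition of $V^+_x(u)$ as the oscillation of $S_{|u|}\ph$ on that set gives $S_{|u|}\ph(y')\ge\Phi^+_x(u)-V^+_x(u)$. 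Therefore $\Phi^+_x(uv)\ge S_{|uv|}\ph(y')\ge\Phi^+_x(u)-V^+_x(u)+S_{|v|}\ph(z)$, and taking the supremum over $z$ gives \eqref{eqn:lower-add-mxu}.

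The only real care needed is the bookkeeping that matches the nonpositive coordinates of $\sigma^{\pm|u|}$ applied to a point in one leaf with the correct finite sequence $x.wu_1$ or $x.uv_1$. This is purely an indexing check, relying on the fact that $\Wul$ depends only on coordinates $\le 0$ and that $x.w$ is defined to encode precisely this backward half. I do not anticipate any substantive obstacle beyond keeping the indexing conventions straight.
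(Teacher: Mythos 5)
Your proposal is correct and follows essentially the same route as the paper: both arguments split the Birkhoff sum at the concatenation point, bound each piece by the corresponding supremum for the upper inequality, and for the lower inequality use the oscillation $V_x^+(u)$ together with the fact that $\sigma^{|u|}$ identifies $\Wul(x)\cap[uv]^+$ with $\Wul(\xminus uv_1)\cap[v]^+$. Your choice to pull back a near-optimizer $z$ via $y'=\sigma^{-|u|}z$ rather than starting from $y\in[uv]^+$ is only a cosmetic reordering of the same estimate.
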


\begin{proof}
	Let $y\in\Wul(x)\cap [wu]^+$ and observe that
	\[S_{|wu|}\ph(y)=S_{|w|}\ph(y)+S_{| u|}\ph(\sigma^{| w| }y).\]
	If $y'\in\Wul(x)\cap [w]^+$ and $z\in \Wul(x.wu_1)\cap [u]^+$, then taking supremums yields
	\begin{align*}
		\Phi_x^+(wu)& =\sup_{y}S_{|wu|}\ph(y)
		\leq \sup_{y',z}S_{|w|}\ph(y')+S_{| u|}\ph(\sigma^{|w|}z)\\
		& =\Phi_{x}^+(w)+\Phi_{\xminus wu_1}^+(u).
	\end{align*}
	Similarly, if $y \in [uv]^+$ and $y'\in \Wul(x)\cap[u]^+$ we have
	\begin{align*}
		S_{|u|}\ph(y)+S_{|v|}\ph(\sigma^{|v| }y)&=S_{|u|}\ph(y)-S_{|u|}\ph(y') + S_{|u|}\ph(y')+S_{|v|}\ph(\sigma^{|u| }y)\\
		&\geq -V_x^+(u)+ S_{|u|}\ph(y')+S_{|v|}\ph(\sigma^{|u|}y).
	\end{align*}
	If $z\in\Wul(x.uv_1)\cap [v]^+$, then taking supremums gives us
	\begin{align*}
		\Phi_x^+(uv)&=\sup_{y}S_{|u|}\ph(y)+S_{|v|}\ph(\sigma^{|v|}y) 
		\geq \sup_{y',z}-V_x^+(u)+ S_{|u|}\ph(y')+S_{|v|}\ph(z)\\
& = \Phi_{x}^+(u)+\Phi_{\xminus uv_1}(v)-V_x^+(u).
	\end{align*}
	Since every $z\in\Wul(x.uv_1)\cap[v]^+$ is equal to $\sigma^{|u|}y$ for some $y\in\Wul(x)\cap [uv]^+$.
\end{proof}

Now we prove the inequalities in Theorem \ref{thm:finite} for unstable leaves.

\begin{proof}
	We begin with the proof of the upper bound in \eqref{eqn:finite-u}.  For all $n\in\NN$, we have $\Wul(x) \subset X = \bigcup_{u\in \LLL_n} [u]^+$, so $\LLL_n \in \EE^+(\Wul(x),N)$ whenever $n\geq N$. Thus
	\begin{equation}\label{eqn:2-Phi-upper}
		\inf\left\{\sum_{u\in\EEE}e^{\Phi^+_{x }(u)}:\EEE\in\EE^+(\Wul(x),N)\right\}\leq \sum_{u\in\LLL_{n}}e^{\Phi^+_{x}(u)} = \Lambda_n^+(x).
	\end{equation}	
	Sending $n\to\infty$ along an arbitrary subsequence and recalling \eqref{eqn:Q+}, this gives
	\[m_x^u(\Wul(x))\leq \lQ^+_{x},\]
	which proves the upper bound in equation \eqref{eqn:finite-u}.  
	
	For the lower bound in \eqref{eqn:finite-u}, we begin by finding an inequality for expressing a fixed cylinder as a disjoint union of longer cylinders of some uniform length.  Writing $\SSS_n(\xminus u):=\{v\in\LLL_n:[v]^+ \cap \Wul(x.u) \neq\emptyset\}$, then by inequality \eqref{eqn:upper-add-mxu}, we have
	\begin{align*}
		\sum_{v\in\SSS_n(\xminus u)}e^{\Phi^+_{x}(uv)} & \leq  \sum_{v\in\SSS_n(\xminus u)}e^{\Phi^+_{x}(u)+\Phi^+_{\xminus uv_1}(v)} 
		= e^{\Phi^+_{x}(u)} \sum_{v\in\SSS_n(\xminus u)}e^{\Phi^+_{\xminus uv_1}(v)} \\
		& \leq  e^{\Phi^+_{x}(u)}\sum_{v\in\LLL_n}e^{\Phi(v)}  
		 = e^{\Phi^+_{x}(u)}Q_n.
	\end{align*}
	This implies that 
	\begin{equation}\label{eqn:unfm-cyl-len}
		\left(Q_n\right)^{-1}\sum_{v\in\SSS_n(\xminus u)}e^{\Phi^+_{x}(uv)}\leq e^{\Phi^+_{x}(u)}.
	\end{equation}
	Next, we will take a cover of $\Wul(x)$ and lengthen the words of the cylinders so they are all the same length, then apply the bound in \eqref{eqn:unfm-cyl-len}.
	
By compactness of $\Wul(x)$, it suffices to consider only finite covers of $\Wul(x)$ in estimating $\mmu_x(\Wul(x))$.  Given such a finite cover $\EEE$ and any $n \geq \max \{ |u| : u\in \EEE\}$ (which is finite since $\EEE$ is), we produce a new cover $\{uv : u \in \EEE, v\in \SSS_{n-|u|}(x.u)\}$. All words of this new cover have length $n$.
Now \eqref{eqn:unfm-cyl-len} gives
	\begin{equation}\label{eqn:2-w-factor}
		\sum_{u\in\EEE} e^{\Phi^+_{x}(u)}\geq \sum_{u\in\EEE}\left(Q_{n-|u|}\right)^{-1}\sum_{v\in\SSS_{n-| u|}(\xminus u)}e^{\Phi^+_{x}(uv)}.
	\end{equation}
	We desire to find a bound for the $Q_{n-|u|}$ so that we can factor it out of the sum.  For all $Q>\uQ$ there exists an $n_0$ such that $Q>Q_k$ for all $k\geq n_0$.  If we restrict $n$ to be greater than $n_0+\max_{u\in\EEE}|u|$, then \eqref{eqn:2-w-factor} implies that
	\begin{equation}\label{eqn:2-w-arb}
			\sum_{u\in\EEE}e^{\Phi^+_{x}(u)}\geq {Q}^{-1}\sum_{u\in\EEE}\sum_{v\in\SSS_{n-|u|}(\xminus u)}e^{\Phi^+_{x}(uv)}
			\geq{Q}^{-1}\sum_{w\in\LLL_n}e^{\Phi^+_{x}(w)}.
	\end{equation}
	The new index in the last step follows from the fact that for each $u$, the word $uv$ has length $n$, and the collection of $[uv]^+$ covers $\Wul(x)$.  Additionally, any word $w\in\LLL_n$ such that $\Wul(x)\cap [w]^+=\emptyset$ does not contribute to the sum. 
	
	By taking an arbitrary subsequence $n_k\to\infty$ in \eqref{eqn:2-w-arb} we have 
	\[m^u_x(\Wul(x))\geq{Q}^{-1}\uQ^+_{x}.\]
	Since $Q>\uQ$ was arbitrary, this proves the lower bound in \eqref{eqn:finite-u}.
\end{proof}
The proof for \eqref{eqn:finite-s} is analogous. 
\section{Scaling of leaf measures in Theorem \ref{thm:dynamics}}\label{sec:shift-mxs-pf}

We will prove the scaling property in Theorem \ref{thm:dynamics} for unstable leaves by proving a preliminary lemma.  The approach for the stable setting is identical by reversing the direction of the shift.

We will use the notation $\ph(x)=c\pm\eps$ as a shorthand for $c-\eps\leq \ph(x) \leq c+\eps$.

\begin{lemma}\label{lem:shift-mxu-scales}
	Let $Z\subset \Wul(x)$ such that $\sigma Z\subset \Wul(\sigma x)$.  Assume further that there is a constant $c\in\RR$ and $N_0\in\NN$ such that for any $w\in\LLL_{\geq N_0}$ with $[w]^+\cap Z\neq\emptyset$ we have that $\ph(y)=c\pm\eps$ for all $y\in[w]^+ \cap \Wul(x)$. Then we have
	\[\mmu_{\sigma x}(\sigma Z)=e^{\pm2\eps}\int_Ze^{-\ph(y)}\,d\mmu_x(y).\]
\end{lemma}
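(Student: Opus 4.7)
The plan is to exploit the bijection induced by $\sigma$ between cylinders cut out on $\Wul(x)$ and those on $\Wul(\sigma x)$, and to use the hypothesis on $\ph$ to relate the weights $\Phi_x^+(w)$ and $\Phi_{\sigma x}^+(w')$ where $w'$ is obtained from $w$ by dropping its first symbol. The assumption on $\ph$ will also let me pass between $\mmu_x(Z)$ and $\int_Z e^{-\ph}\,d\mmu_x$ with an $e^{\pm\eps}$ error.

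First I would record the key bijection: because $y\in\Wul(x)\cap[w]^+$ forces $w_0=x_0$, the map $y\mapsto \sigma y$ is a bijection between $\Wul(x)\cap[w]^+$ and $\Wul(\sigma x)\cap[w']^+$, where $w'=w_1 w_2\cdots w_{|w|-1}$, and the map $w\mapsto w'$ is injective on words with $[w]^+\cap\Wul(x)\neq\emptyset$. Combined with $S_{|w|}\ph(y)=\ph(y)+S_{|w|-1}\ph(\sigma y)$ and the hypothesis $\ph(y)=c\pm\eps$ on $[w]^+\cap\Wul(x)$ for $|w|\geq N_0$, one obtains
\[
\Phi_x^+(w)=\Phi_{\sigma x}^+(w')+c\pm\eps,\qquad \text{hence}\qquad e^{\Phi_x^+(w)}=e^{c\pm\eps}e^{\Phi_{\sigma x}^+(w')}.
\]
Covering any $y\in Z$ by the cylinder $[y_{[0,N_0)}]^+$ and applying the hypothesis at $w=y_{[0,N_0)}$, one also sees $\ph(y)=c\pm\eps$ on $Z$, so
\[
\int_Z e^{-\ph(y)}\,d\mmu_x(y)=e^{-c\pm\eps}\,\mmu_x(Z).
\]

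For the upper bound, given $\EEE\in\EE^+(Z,N)$ with $N\geq N_0$ (and WLOG every $w\in\EEE$ meeting $Z$), set $\EEE'=\{w':w\in\EEE\}\in\EE^+(\sigma Z,N-1)$. Injectivity of $w\mapsto w'$ and the weight comparison give
\[
\sum_{w'\in\EEE'}e^{\Phi_{\sigma x}^+(w')}\leq e^{-c+\eps}\sum_{w\in\EEE}e^{\Phi_x^+(w)}.
\]
Taking the inf over $\EEE$ and sending $N\to\infty$ yields $\mmu_{\sigma x}(\sigma Z)\leq e^{-c+\eps}\mmu_x(Z)\leq e^{2\eps}\int_Z e^{-\ph}\,d\mmu_x$. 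For the lower bound, given $\FFF\in\EE^+(\sigma Z,N)$ with $N\geq N_0-1$, form $\EEE=\{x_0 w':w'\in\FFF,\,x_0 w'\in\LLL\}\in\EE^+(Z,N+1)$; a direct check (using $y_0=x_0$) shows this covers $Z$. The same weight identity gives
\[
\sum_{w\in\EEE}e^{\Phi_x^+(w)}\leq e^{c+\eps}\sum_{w'\in\FFF}e^{\Phi_{\sigma x}^+(w')},
\]
and passing to the inf and the limit gives $\mmu_x(Z)\leq e^{c+\eps}\mmu_{\sigma x}(\sigma Z)$, hence $\mmu_{\sigma x}(\sigma Z)\geq e^{-c-\eps}\mmu_x(Z)\geq e^{-2\eps}\int_Z e^{-\ph}\,d\mmu_x$.

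The only real subtlety is bookkeeping: verifying that the constructed $\EEE'$ (respectively $\EEE$) is actually a cover of the right set, and that restricting to words with $[w]^+\cap Z\neq\emptyset$ (respectively $[w']^+\cap\sigma Z\neq\emptyset$) does not inflate the infimum, so that the uniform estimate $\Phi_x^+(w)=\Phi_{\sigma x}^+(w')+c\pm\eps$ can be applied to every term in the sum. With the standing assumption $P=0$ in force the exponent $-|w|P$ is invisible, so no further pressure corrections appear.
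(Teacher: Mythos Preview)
Your proof is correct and follows essentially the same route as the paper's: relate $\Phi_x^+(w)$ to $\Phi_{\sigma x}^+(w')$ via the hypothesis $\ph=c\pm\eps$, transfer covers of $Z$ to covers of $\sigma Z$ (and back) by dropping or prepending the symbol $x_0$, and separately compare $\int_Z e^{-\ph}\,d\mmu_x$ with $e^{-c\pm\eps}\mmu_x(Z)$. The paper phrases the middle step as ``$\sigma$ acts as a bijection between covers in $\EE^+(Z,N)$ and $\EE^+(\sigma Z,N-1)$'' and writes one chain of $\pm$-equalities, whereas you split into explicit upper and lower bounds; this is only a presentational difference.

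One small imprecision worth tightening: your opening claim that $y\mapsto\sigma y$ is a bijection $\Wul(x)\cap[w]^+\to\Wul(\sigma x)\cap[w']^+$ does not follow from $w_0=x_0$ alone---it also needs $w_1=x_1$, which is exactly where the hypothesis $\sigma Z\subset\Wul(\sigma x)$ enters (any $z\in Z$ has $z_1=x_1$, so $[w]^+\cap Z\neq\emptyset$ forces $w_1=x_1$). You effectively use this when you restrict to words meeting $Z$, and you flag the bookkeeping at the end, so the argument goes through; just state the bijection under that restriction rather than as a general fact.
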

\begin{proof}
	If $|w|\geq N_0$ we have that $\ph(x)=c\pm\eps$ on $[w]^+$ so
	\begin{equation}
		\begin{split}\label{eqn:Phi-plus-cont}
			\Phi_x^+(w)&=\sup\left\{S_{|w|}\ph(y): y\in [w]^+\cap \Wul(x)\right\}\\
			& = \sup\left\{S_{|w|-1}\ph\left(\sigma y\right)+c\pm\eps: y\in [w]^+\cap \Wul(x)\right\}\\
			&=c\pm\eps+\sup\left\{S_{|w|-1}\ph(y'): y'\in [\sigma w]^+\cap \Wul(\sigma x)\right\}\\
			&=c\pm\eps+\Phi_{\sigma x}^+(\sigma w).
		\end{split}
	\end{equation}
	Because $\sigma(Z)\subset\Wul(\sigma x)$ for any $z\in Z$, the symbol $z_{1}=x_{1}$.	This implies that for a sufficiently long word $w$ such that $[w]^+\cap Z$, the map $\sigma$ is a bijection between $[w]^+\cap\Wul(x)$ and $[\sigma w]^+\cap \Wul(\sigma x)$.
	
	Moreover, $\sigma$ acts as a bijection between covers in $\EE^+(Z,N)$ and $\EE^+(\sigma Z,N-1)$. That is, if $w\in\EEE$ for some cover in $\EE^+(Z,N)$, then $[\sigma w]^+\cap \sigma Z\neq\emptyset$.  Likewise, if $w\in\EEE'$ for some cover in $\EE(\sigma Z,N-1)$, then $[x_0w]^+\cap Z\neq\emptyset$.

	Using equation \eqref{eqn:Phi-plus-cont} in the definition of $\mmu_x(Z)$ for $N> N_0$, we get
	\begin{equation}\label{eqn:shift-mxu-bound}
		\begin{split}
			\mmu_x(Z)&=\lim_{N\to\infty}\inf\left\{\sum_{w\in\EEE}e^{\Phi_{x}^+(w)}: \EEE\in\EE^+(Z, N)\right\}\\
			&=\lim_{N\to\infty}\inf\left\{\sum_{w\in\EEE}e^{c\pm\eps+\Phi_{\sigma x}^+(\sigma w)}:\EEE\in\EE^+(Z,N)\right\} \\
			&=e^{c\pm\eps}\lim_{N\to\infty}\inf\left\{\sum_{u\in\EEE'}e^{\Phi_{\sigma x}^+(u)}: \EEE'\in\EE^+(\sigma Z, N-1)\right\}\\
			& = e^{c\pm\eps}\mmu_{\sigma x}(\sigma Z)
		\end{split}
	\end{equation}
	where $u=\sigma w$.  Additionally, because $\ph(y) =c\pm\eps$ on $Z$, we have that
	\begin{equation}\label{eqn:shift-mxu-int}
		\int_{Z}e^{-\ph(y)}\,d\mmu_x(y)=\int_{Z}e^{-c\pm\eps}\,d\mmu_x(y)=e^{-c\pm\eps}\mmu_{x}(Z).
	\end{equation}
		Combining the result from equation \eqref{eqn:shift-mxu-bound} and \eqref{eqn:shift-mxu-int} we get 
	\begin{equation*}
		\int_{Z} e^{-\ph(y)} \,d\mmu_x(y) 
		=e^{-c\pm\eps} \mmu_x(Z)
		=e^{-c\pm\eps} e^{c\pm\eps} \mmu_{\sigma x}(\sigma Z)
		=e^{\pm2\eps} \mmu_{\sigma x}(\sigma Z),
	\end{equation*}
which completes the proof of the lemma.
\end{proof}

To prove Theorem \ref{thm:dynamics}, we consider a set $Z\subset \Wul(x)$ satisfying $\sigma Z\subset\Wul(\sigma x)$.  For $\eps>0$, let $Z_n=Z\cap \ph^{-1}([n\eps/2,(n+1)\eps/2))$ and note that the sets $Z_n$ are measurable, mutually disjoint, and $Z=\bigcup_{n\in\ZZ}Z_n$.   Moreover, 
since $\ph$ is uniformly continuous, there is $N\in\NN$ such that $|\ph(y) - \ph(z)|<\eps/2$ whenever $y_{[-N,N]} = z_{[-N,N]}$, and in particular given any $w\in \LLL$ with $|w|\geq N$ and $[w]^+ \cap Z_n \neq\emptyset$, we can fix $y\in [w]^+\cap Z_n$ and observe that any $z\in [w]^+ \cap \Wul(x)$ has
\[
|\ph(z) - n\eps/2| \leq |\ph(z) - \ph(y)| + |\ph(y) - n\eps/2|
\leq \eps/2 + \eps/2 = \eps.
\]
Thus each $Z_n$ satisfies the conditions of Lemma \ref{lem:shift-mxu-scales}, so
\begin{equation*}
	\mmu_{\sigma x}(\sigma Z)=\sum_{n\in\ZZ}\mmu_{\sigma x}(\sigma Z_n) = e^{\pm2\eps}\sum_{n\in\ZZ}\int_{Z_n}e^{-\ph(y)}\,d\mmu_x(y)
	 = e^{\pm2\eps}\int_Ze^{-\ph(y)}\,d\mmu_x(y).
\end{equation*}
Letting $\eps\to0$ proves \eqref{eqn:shift-mxu-scales}. The proof of \eqref{eqn:shift-mxs-scales} is analogous.

\section{Uniformly continuous holonomies; proof of Theorem \ref{thm:shift-cts}}\label{sec:shift-cts-pf}

In this section we prove the results in \S\ref{sec:holonomies} about uniformly continuous holonomies, starting with Propositions \ref{prop:open-hol} and \ref{prop:zero-hol}, then proceeding to Theorem \ref{thm:shift-cts}.

\subsection{Examples with uniformly continuous holonomies}

Propositions \ref{prop:open-hol} and \ref{prop:zero-hol} both rely on the following fact.

\begin{lemma}\label{lem:same-E}
Given any rectangle $R$, any $y,z\in R$, and any $Y \subset \Wu_R(y)$, we have $\EE^+(Y,N) = \EE^+(\pi^s_{y,z}(Y),N)$ for all $N\in\NN$.
\end{lemma}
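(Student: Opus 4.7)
The plan is to exploit the defining feature of the bracket operation from \eqref{eqn:lps}: since $[z,x]_n = x_n$ for all $n\geq 0$, the point $\pi^s_{y,z}(x) = [z,x]$ has exactly the same forward tail as $x$. This is precisely the information seen by forward cylinders, so passing through the holonomy should not change membership in any cylinder $[w]^+$.

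More concretely, I would first observe that for every word $w\in \LLL$ and every $x\in \Wu_R(y)$,
\[
\pi^s_{y,z}(x) \in [w]^+ \iff x\in [w]^+,
\]
because both conditions are equivalent to $x_{[0,|w|)} = w$. This is the one genuine input; everything else is bookkeeping.

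Given this, if $\EEE \in \EE^+(Y,N)$, then each $x\in Y$ lies in some $[w]^+$ with $w\in \EEE$, and by the equivalence above the corresponding holonomy image $\pi^s_{y,z}(x)$ lies in the same $[w]^+$. Hence $\EEE$ covers $\pi^s_{y,z}(Y)$ and so $\EEE \in \EE^+(\pi^s_{y,z}(Y),N)$. The reverse inclusion is identical, using that $\pi^s_{y,z}$ is a bijection between $\Wu_R(y)$ and $\Wu_R(z)$ with inverse $\pi^s_{z,y}$, and that the same forward-tail equivalence applies to it.

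There is essentially no obstacle here: the lemma is a formal consequence of the fact that stable holonomy preserves forward tails, and forward cylinders only see forward tails. The statement is set up so that the length-$N$ threshold and the ambient rectangle $R$ play no role in the argument — they simply come along for the ride.
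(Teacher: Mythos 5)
Your proof is correct and is essentially the paper's own argument: both rest on the single observation that $\pi^s_{y,z}(x)=[z,x]$ agrees with $x$ on all nonnegative coordinates, so the two points lie in exactly the same forward cylinders, whence any cover in $\EE^+(Y,N)$ is a cover of $\pi^s_{y,z}(Y)$ and conversely by symmetry.
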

\begin{proof}
Given any $N\in \NN$ and $\EEE\in \EE^+(Y,N)$, for every $x\in \pi^s_{y,z}(Y)$ we have $x' = (\pi^s_{y,z})^{-1}(x) \in Y$ and thus $x' \in [w]^+$ for some $w\in \EEE$, but since $x' \in \Wsl(x)$ this implies that $x \in [w]^+$ as well, so $\EEE \in \EE^+(\pi^s_{y,z}(Y),N)$. This proves one inclusion, and the other follows by symmetry.
\end{proof}

With this lemma in hand, Proposition \ref{prop:zero-hol} (the case $\ph\equiv 0$) is easy to prove.

\begin{proof}[Proof of Proposition \ref{prop:zero-hol}]
By Lemma \ref{lem:same-E}, we have $\EE^+(Y,N) = \EE^+(\pi^s_{y,z}(Y),N)$. For every $\EEE \in \EE^+(Y,N)$ and every $w\in \EEE$, we have $\Phi^+_y(w) = \Phi^+_z(w) = -|w| P$ since $\ph\equiv 0$, and thus
$\sum_{w\in \EEE} e^{\Phi^+_y(w)} = \sum_{w\in \EEE} e^{\Phi^+_z(w)}$. Taking an infimum over all such $\EEE$ gives $\mmu_y(Y) = \mmu_z(\pi^s_{y,z}(Y))$.
\end{proof}

Now we prove uniform continuity of holonomies for open rectangles when $\ph$ has the Walters property.

\begin{proof}[Proof of Proposition \ref{prop:open-hol}]
By the Walters property, given $\eps>0$, there is $k\in \NN$ such that for all $n\in \NN$ and $y,z\in R$ with $y_i = z_i$ for all $i\geq -k$, we have $|S_n\ph(y) - S_n\ph(z)| \leq \eps$ for all $n\in\NN$. We claim that $\delta = 2^{-k}$ satisfies the required conclusion in Definition \ref{def:cts-hol}; that is, that for any open rectangle $R$ and any $y,z\in R$ with $d(y,z) < \delta$, we have $\mmu_y(Y) = e^{\pm \eps} \mmu_z(\pi_{y,z}^s(Y))$ for all measurable $Y\subset \Wu_R(y)$.

It suffices to show this conclusion in the case when $R = [u]^- \cap [v]^+$ for some $u,v\in \LLL$. Indeed, \emph{any} open rectangle $R$ can be written as a disjoint union $R = \bigsqcup_{i=1}^\infty R_i$, where each $R_i$ is of the form $[u]^- \cap [v]^+ \cap R = [u]^- \cap [v]^+$, and then if we have the result for each $R_i$, we can deduce that if $d(y,z) < \delta$, then there are $y_i \in \Wu_R(y) \cap R_i$ and $z_i \in \Wu_R(z) \cap R_i$ such that $d(y_i,z_i) < \delta$, so
\[
\mmu_y(Y) = \sum_{i=1}^\infty \mmu_{y_i}(Y\cap R_i) = \sum_{i=1}^\infty e^{\pm \eps} \mmu_{z_i}(\pi_{y,z}^s(Y) \cap R_i) = e^{\pm \eps} \mmu_z(\pi_{y,z}^s(Y)).
\]
To prove the conclusion when $R = [u]^- \cap [v]^+$, we observe that given any $w\in \LLL$ with $|w| \geq |v|$ and $[w]^+ \cap R \neq\emptyset$, we in fact have $[u]^- \cap [w]^+ \subset R$. Given any $x\in \Wu_R(y) \cap [w]^+$, we have $x' := \pi^s_{y,z}(x) \in \Wu_R(z) \cap [w]^+$ and so $x_i = x_i'$ for all $i\geq -k$. By the Walters property, this implies that
\[
|S_n\ph(x) - S_n\ph(x')| \leq \eps \text{ for all } n\in \NN.
\]
Taking a supremum over all $x\in \Wu_R(y) \cap [w]^+$ gives $\Phi_x^+(w) \leq \Phi_{x'}^+(w) + \eps$, and by the symmetrical result with $x,x'$ reversed we get
\begin{equation}\label{eqn:Phixx'}
\Phi_x^+(w) = \Phi_{x'}^+(w) \pm \eps.
\end{equation}
(Note that this step is where we use the openness of $R$ in an essential way; the argument would fail if we did not have $[u]^- \cap [w]^+ \subset R$, since then the holonomy map would only be defined on $\Wul(x) \cap [w]^+ \cap R$, which might not be all of $\Wul(x) \cap [w]^+$.)

Fix $Y\subset \Wu_R(y)$ and let $N \geq |w|$. Then any cover $\EEE \in \EE^+(Y,N)$ is also a cover of $\pi^s_{y,z}(Y) \subset \Wu_R(z)$, and vice versa, by the product structure, and \eqref{eqn:Phixx'} gives
	\begin{align*}
		\sum_{w\in\EEE}e^{\Phi^+_y(w)}=\sum_{w\in\EEE}e^{\Phi^+_z(w)\pm\eps}.
	\end{align*}
	Taking an infimum over covers in $\EE^+(Y,N)$ (and hence also over $\EE^+(\pi^s_{y,z}(Y),N)$) and then sending $N\to\infty$ gives $\mmu_y(Y)=e^{\pm\eps}\mmu_z(\pi^s_{y,z}(Y))$.  This proves the conclusion when $R = [u]^- \cap [v]^+$, and by the discussion at the start of the proof, taking countable unions of such rectangles gives the result for arbitrary open rectangles.
\end{proof}

\subsection{Proof of Theorem \ref{thm:shift-cts}}

Now we prove Theorem \ref{thm:shift-cts}, that uniform continuity of holonomies is equivalent to uniform convergence and boundedness of $\Delta^s$ together with the formula \eqref{eqn:shift-RN} for the Radon--Nikodym derivative of the holonomy map.

Before proving the equivalence, we observe an important consequence of the positivity condition in Definition \ref{def:pos-meas}; given any $R\in \RRR$, any $x\in R$, and any $w\in \LLL$ such that $[w]^+ \cap \Wu_R(x) \neq \emptyset$, we fix $y\in [w]^+ \cap \Wu_R(x)$ and get $\sigma^n(y)$
$\in \sigma^n(R\cap [w]^+)$, which is itself a rectangle in $\RRR$ by $\LLL$-invariance, and thus
\[
\mmu_{\sigma^n(y)} (\sigma^n(R\cap [w]^+)) > 0
\]
by positivity. Applying \eqref{eqn:mxu-RN-n}, we see that $\mmu_x([w]^+) = \mmu_y([w]^+) > 0$, and conclude that $\mmu_x$ is fully supported in the following sense.

\begin{lemma}\label{lem:full-support}
If $\RRR$ is an $\LLL$-invariant family of rectangles on which the family of leaf measures $\mmu_x$ is positive, then for every $R\in \RRR$ and $x\in R$, the measure $\mmu_x$ gives positive weight to every (relatively) open set in $\Wu_R(x)$.
\end{lemma}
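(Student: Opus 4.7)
The plan is to reduce the lemma to the positivity of $\mmu_x$ on cylinders, which is essentially established in the paragraph immediately preceding the statement. That paragraph combines $\LLL$-invariance of $\RRR$ with Definition \ref{def:pos-meas} and the scaling formula \eqref{eqn:mxu-RN-n}, together with Proposition \ref{prop:xy}, to show that $\mmu_x([w]^+ \cap \Wu_R(x)) > 0$ whenever $w \in \LLL$ satisfies $[w]^+ \cap \Wu_R(x) \neq \emptyset$. Granting this, the lemma becomes a one-step topological argument: any relative open neighborhood of any point $y \in \Wu_R(x)$ must contain a set of the form $[w]^+ \cap \Wu_R(x)$, and hence has positive $\mmu_x$-measure.

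To flesh out the topological step, I would first fix a nonempty relatively open $V \subset \Wu_R(x)$ and choose $y \in V$. Writing $V = U \cap \Wu_R(x)$ with $U$ open in $X$, and using that finite intersections of the form $[u]^+ \cap [u']^-$ form a basis for the topology on $X$, I can find $u, u' \in \LLL$ with $y \in [u]^+ \cap [u']^- \subset U$. Since $y \in \Wu_R(x)$ forces $y_k = x_k$ for all $k \leq 0$, the sequence $x$ itself lies in $[u']^-$ (the negative coordinates of $y$ recorded by $u'$ are shared with $x$), so $\Wul(x) \subset [u']^-$. Intersecting with $\Wu_R(x)$ therefore collapses the basic neighborhood to $[u]^+ \cap \Wu_R(x) \subset V$, and this set is nonempty because it contains $y$. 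Invoking the cylinder positivity from the preceding paragraph yields $\mmu_x(V) \geq \mmu_x([u]^+ \cap \Wu_R(x)) > 0$.

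There is no serious obstacle here; the content of the lemma is largely bookkeeping on top of the preceding observation. The only point that merits a line of care is the collapse of the basic open neighborhood $[u]^+ \cap [u']^-$, after intersection with $\Wu_R(x)$, down to a single forward cylinder $[u]^+ \cap \Wu_R(x)$, which is possible precisely because every point of $\Wu_R(x)$ shares its entire past with $x$.
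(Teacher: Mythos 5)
Your proposal is correct and follows essentially the same route as the paper: the paper's proof of the lemma consists precisely of the cylinder-positivity observation in the preceding paragraph (via $\LLL$-invariance, Definition \ref{def:pos-meas}, \eqref{eqn:mxu-RN-n}, and Proposition \ref{prop:xy}), with the topological reduction left implicit. You simply spell out that reduction — the collapse of a basic neighborhood $[u]^+\cap[u']^-$ to $[u]^+\cap\Wu_R(x)$ because all points of $\Wu_R(x)$ share their past with $x$ — which is a correct and worthwhile clarification.
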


Now we prove the equivalence of the two conditions in Theorem \ref{thm:shift-cts}.
Start by assuming that $\Delta^s$ converges uniformly and that \eqref{eqn:shift-RN} is the Radon-Nikodym derivative.\footnote{In fact, uniform convergence of $\Delta^s$ implies boundedness, see Lemma \ref{lem:Delta-bdd}.} We must prove uniform continuity of holonomies.

Observe that each partial sum $\sum_{n=0}^N (\ph(\sigma^n x') - \ph(\sigma^n x))$ is uniformly continuous, and thus by uniform convergence, $\Delta^s$ is uniformly continuous as well. Since $\Delta^s(x,x) = 0$, this implies that given $\delta>0$, there exists $N\in \NN$ such that given any $x,x'\in R\in\RRR$ satisfying $x_i=x'_i$ for all $i \geq -N$, we have $|\Delta^s(x',x)| < \delta$.

Now given any $y,z\in R$ with $d(y,z) < 2^{-N}$, we have $y_i = z_i$ for all $|i|\leq N$, and thus given any $x\in \Wu_R(y)$ and $x' = \pi^s_{y,z}(x)$, we have $x_i = x'_i$ for all $i\geq -N$. Using \eqref{eqn:shift-RN}, it follows that for every $Y\subset \Wu_R(y)$, we have
\[
\mmu_z(\pi_{y,z}^s(Y))=\int_{Y}e^{\Delta^s([z,x],x)}\,d\mmu_y(x)
=\int_Ye^{\pm\delta}\,d\mmu_y(x) =e^{\pm\delta}\,d\mmu_y(Y).
\]

Conversely, suppose that the leaf measures have uniformly continuous holonomies on $\RRR$.  Let $y\in R$ and fix $\delta>0$.  There exists an $N_0\in \NN$ such that if $y'_i=y_i$ for all $i\geq -N_0$, then
\begin{equation}\label{eqn:pmdelta}
\mmu_y(Y)=e^{\pm\delta}\mmu_{y'}(\pi_{y,y'}^s(Y)).
\end{equation}
	Choose $Y\subset\Wu_R(y)$ such that $\sigma^{N}(Y)\subset \Wul(\sigma^{N}y)$.  For simplicity, consider $z\in\Ws_R(y)$, and note that given any $N\geq N_0$, we have
	$(\sigma^{N}z)_{i}=(\sigma^{N}y)_{i}$ for all $i\geq -N$, and hence for all $i\geq -N_0$.   Thus, \eqref{eqn:pmdelta} gives
\begin{equation}\label{eqn:pmdelta-again}
\mmu_{\sigma^{N}y}(\sigma^{N}Y)=e^{\pm\delta}\mmu_{\sigma^{N}z}(\pi^s_{y,z}(\sigma^{N}Y)).
\end{equation}
Using \eqref{eqn:shift-mxu-scales} twice and  \eqref{eqn:pmdelta-again} once, we get
	\begin{align*}
		\mmu_z(\pi^s_{y,z}(Y))&=\int_{\sigma^{N}(\pi^s_{y,z}(Y))}e^{S_N\ph(\sigma^{-N}x)}\,d\mmu_{\sigma^{N}z}(x)\\
		&=e^{\pm\delta}\int_{\sigma^N(Y)}e^{S_N\ph(\sigma^{-N}[\sigma^Nz,x])}\,d\mmu_{\sigma^{N}y}(x)\\
		&=e^{\pm\delta}\int_{Y}e^{S_N\ph(\sigma^{-N}[\sigma^{N}z,\sigma^Nx])-S_N\ph(x)}\,d\mmu_{y}(x)\\
		&=e^{\pm\delta}\int_{Y}e^{S_N\ph([x,z])-S_N\ph(x)}\,d\mmu_{y}(x).
	\end{align*}
The last line follows because for any $x\in \Wul(y)\cap Y$, we have that $x_{[0,N]}=y_{[0,N]}=z_{[0,N]}$ as $\sigma^{N}(Y)\subset\Wul(\sigma^{-N}y)$ and $z\in\Ws_R(y)$.  Thus $\sigma^N[\sigma^{-N}x,\sigma^{-N}z]=[x,z]$.

Since every $Y\subset \Wu_R(y)$ can be decomposed as a disjoint union of sets that lie in a cylinder of length $N$, we conclude that with $N_0 = N_0(\delta)$ as above, if we write $\Delta_N(x',x) := S_N\ph(x') - S_N\ph(x)$ then we have
\begin{equation}\label{eqn:muY}
\mmu_z(\pi^s_{y,z}(Y)) = e^{\pm \delta} \int_{Y}e^{\Delta_N([x,z],x)}\,d\mmu_{y}(x)
\end{equation}
for every $y,z,x$, every measurable $Y\subset \Wu_R(y)$, and every $N\geq N_0$. Thus for every $N,N'\geq N_0$ we have
\begin{equation}\label{eqn:int-close}
\int_Y e^{\Delta_N([x,z],x)} \,d\mmu_y(x) = e^{\pm 2\delta} \int_Y e^{\Delta_{N'}([x,z],x)} \,d\mmu_y(x).
\end{equation}
Since $\mmu_y$ gives positive weight to every relatively open set in $\Wu_R(y)$ by Lemma \ref{lem:full-support}, a standard argument shows that $\Delta_N([x,z],x) = \Delta_{N'}([x,z],x) \pm 2\delta$ for all $x\in \Wu_R(y)$; indeed, if this inequality fails at any point, then by continuity there is a relatively open set $Y$ on which it fails uniformly, and since $\mmu_y(Y)>0$, this would violate the integral estimates in \eqref{eqn:int-close}.

We have proved that $\Delta_N$ is uniformly Cauchy: for every $\delta>0$ there is $N_0\in \NN$ such that for every $R\in \RRR$, every $y,z\in R$, every $x\in \Wu_R(y)$, and every $N,N'\geq N_0$, we have
\[
|\Delta_N([x,z],x) - \Delta_{N'}([x,z],x)| \leq 2\delta.
\]
In particular, given any $x,x'\in R$ with $x' \in \Ws_R(x)$, we can choose $y=x$ and $z=x'$ to deduce that
\[
|\Delta_N(x',x) - \Delta_{N'}(x',x)| \leq 2\delta.
\]
This proves that $\Delta_N$ converges uniformly to $\Delta^s$ on $\{(x,x') \in R^2 : R\in \RRR, x' \in \Ws_R(x)\}$, and then \eqref{eqn:muY} gives
\[
\mmu_z(\pi^s_{y,z}(Y)) = e^{\pm \delta} \int_{Y}e^{\Delta^s([x,z],x)}\,d\mmu_{y}(x).
\]
Since $\delta>0$ is arbitrary, \eqref{eqn:shift-RN} follows.

Finally, we observe that uniform convergence of $\Delta_N$ implies uniform boundedness, by the following lemma.

\begin{lemma}\label{lem:Delta-bdd}
If the sums in the definitions of $\Delta^{s,u}$ converge uniformly, then there is $C>0$ such that $|\Delta^s(x,x')| \leq C$ for all $(x,x')\in R^2$, $R\in \RRR$, $x'\in W_R^s(x)$, and similarly for $\Delta^u$.
\end{lemma}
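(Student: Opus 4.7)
The plan is to exploit uniform convergence of the defining series in the most direct way possible: split $\Delta^s(x,x')$ into a finite initial sum and a uniformly small tail. Since the tail is controlled by the hypothesis and the head is controlled by a trivial sup-norm estimate on $\ph$, uniform boundedness follows with essentially no work.

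First I would choose $\eps = 1$ and invoke uniform convergence: there exists $N_0 \in \NN$ such that for every $R \in \RRR$ and every pair $(x,x')$ with $x' \in W_R^s(x)$,
\[
\Big| \sum_{n=N_0}^{\infty} \big(\ph(\sigma^n x') - \ph(\sigma^n x)\big) \Big| \leq 1.
\]
Next I would note that $\ph$ is continuous on the compact space $X$, hence $\|\ph\|_\infty < \infty$, so the finite head satisfies the trivial bound
\[
\Big| \sum_{n=0}^{N_0 - 1} \big(\ph(\sigma^n x') - \ph(\sigma^n x)\big) \Big| \leq 2 N_0 \|\ph\|_\infty.
\]
Combining these two estimates via the triangle inequality gives $|\Delta^s(x,x')| \leq 2 N_0 \|\ph\|_\infty + 1$, uniformly over the stated set.

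The $\Delta^u$ statement is identical after replacing $\sigma^n$ with $\sigma^{-n}$ in both the tail estimate (using the analogous uniform convergence hypothesis on $\{(x,x') \in R^2 : R\in \RRR, x'\in W_R^u(x)\}$) and the head estimate (using the same sup-norm bound on $\ph$, which is unaffected by the change of direction). There is no real obstacle here; the only subtlety worth flagging is that the bound $C$ produced this way depends on the rate of uniform convergence through $N_0$, but this rate is precisely what the hypothesis provides, so no further input is needed.
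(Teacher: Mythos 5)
Your proof is correct and matches the paper's argument exactly: both split $\Delta^s$ into a finite head bounded by $2N_0\|\ph\|_\infty$ and a tail bounded by $1$ via uniform convergence, yielding the same constant $C = 2N_0\|\ph\|_\infty + 1$. Nothing to add.
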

\begin{proof}
By uniform convergence, there exists $N\in\NN$ such that 
\[
\Big|\sum_{k=N}^\infty\ph(\sigma^{k}x)-\ph(\sigma^{k}x')\Big| \leq 1
\]
for all $x,x'$ as in the statement. Thus
\[
|\Delta^s(x,x')|\leq \sum_{k=0}^{N-1}|\ph(\sigma^{k}x)-\ph(\sigma^{k}x')|+\Big|\sum_{k=N}^\infty \ph(\sigma^{k}x)-\ph(\sigma^{k}x')\Big|
\leq 2N\|\ph\| + 1.
\]
The proof for $\Delta^u$ is analogous.
\end{proof}

\section{Equilibrium measures using product structure}\label{sec:es-two-sided}

In this section we prove Theorems \ref{thm:q-indep}, \ref{thm:return-map}, and \ref{thm:push-product} from \S\ref{sec:product-measure}. Throughout this section, we will assume that $\RRR$ is an $\LLL$-invariant family of rectangles on which the families of leaf measures $\mmu_x$ and $\ms_x$ both have uniformly continuous holonomies and are positive in the sense of Definition \ref{def:pos-meas}.

\subsection{Proof of Theorem \ref{thm:q-indep}}

By Remark \ref{rmk:leafs-related}, the assumption of positivity together with the holonomy results from Theorem \ref{thm:shift-cts} guarantees that $\mmu_q(R)>0$ and $\ms_q(R)>0$ for every $q\in R$. This is enough to imply that $m_{R,q}$ from \eqref{eqn:mxm} is nonzero for every $q\in R$, and since $\Delta^{u,s}$ are uniformly bounded by Theorem \ref{thm:shift-cts}, this in turn implies that $\lambda_R$ is nonzero. It remains to show that $\lambda_R$ is independent of our choice of $q\in R$.

To this end, let $q,q'\in R$. Recalling that
\[
\Delta^u(x',x) := \sum_{n=0}^\infty \ph(\sigma^{-n} x') - \ph(\sigma^{-n} x),
\]
we see that $\Delta^u$ has the following additivity property: given any $y\in \Ws_R(q)$ and $z\in\Wu_R(q)$, we have
\[
\Delta^u([y,z],y)= \Delta^u([y,q'],y)
+\Delta^u([y,z],[y,q']).
\]
	Using this and (the $\ms$-version of) Theorem \ref{thm:shift-cts} we have
	\begin{align*}
		\int_{\Ws_R(q)}&e^{\Delta^u([y,z],y)}e^{\Delta^s([y,z],z)}\one_{Z}([y,z])\,d\ms_q(y)\\
		&=\int_{\Ws_R(q)}e^{\Delta^u([y,z],[y,q'])+\Delta^u([y,q'],y)}e^{\Delta^s([y,z],z)}\one_{Z}([y,z])\,d\ms_q(y)\\
		&=\int_{\Ws_R(q)}e^{\Delta^u([y,z],[y,q'])}e^{\Delta^s([y,z],z)}\one_{Z}([y,z])\,d((\pi^u_{q',q})_*\ms_{q'})(y).
	\end{align*}
Given $y\in \Ws_R(q)$, let $y' = (\pi_{q',q}^u)^{-1}(y)$; then the above equation gives

\begin{multline}\label{eqn:stable-leaf-prod}
\int_{\Ws_R(q)}e^{\Delta^u([y,z],y)}e^{\Delta^s([y,z],z)}\one_{Z}([y,z])\,d\ms_q(y)\\
=\int_{\Ws_R(q')}e^{\Delta^u([y',z],y')}e^{\Delta^s([y',z],z)}\one_{Z}([y',z])\,d\ms_{q'}(y').
\end{multline}

Using \eqref{eqn:stable-leaf-prod} and Fubini's theorem we get
\begin{align*}
\lambda_R(Z) &= \int_{\Wu_R(q)} \int_{\Ws_R(q)} e^{\Delta^u([y,z],y)}e^{\Delta^s([y,z],z)}\one_{Z}([y,z])\,d\ms_q(y) \,d\mmu_q(z) \\
&=\int_{\Ws_R(q')} \int_{\Wu_R(q)} e^{\Delta^u([y',z],y')}e^{\Delta^s([y',z],z)}\one_{Z}([y',z])\,d\mmu_q(z)\,d\ms_{q'}(y').
\end{align*}
The argument leading to \eqref{eqn:stable-leaf-prod} lets us replace $q$ with $q'$ in the integral over $\Ws$, provided we also replaced $y$ with $y' = (\pi_{q',q}^s)^{-1}(y)$. A completely analogous argument for the integral over $\Wu$ lets us deduce that
\[
\lambda_R(Z) = \int_{\Ws_R(q')} \int_{\Wu_R(q')} e^{\Delta^u([y',z'],y')}e^{\Delta^s([y',z'],z')}\one_{Z}([y',z'])\,d\mmu_{q'}(z')\,d\ms_{q'}(y'),
\]
where $z'$ and $z$ are related by $z=\pi^u_{q',q}(z')$. We see that using $q'$ instead of $q$ in \eqref{eqn:mxm} and \eqref{eqn:def-lambda-R} would lead to exactly this formula for $\lambda_R(Z)$, and thus we have proved that $\lambda_R$ is independent of the choice of $q\in R$.

\subsection{Proof of Theorem \ref{thm:return-map}}

Now we assume that $R$ is the disjoint union of finitely many rectangles $R_1,\dots, R_\ell\in \RRR$, and let $\lambda_R = \sum_{i=1}^\ell \lambda_{R_i}$ as in \eqref{eqn:def-lambda}.

Let $R^{(1)} = \{x\in R : \tau(x) < \infty\}$ be the set on which $T$ is defined; observe that $\lambda_R(R\setminus R^{(1)}) = 0$ by hypothesis. Define $R^{(n)}$ as the set of points in $R^{(1)}$ whose forward trajectory stays in $R^{(1)}$ for $n-1$ steps, that is, by $R^{(n)} = R^{(1)} \cap T^{-1}(R^{(n-1)})$; again we have $\lambda_R(R\setminus R^{(n)}) = 0$ for all $n$.  Let $R^{(\infty)} = \bigcap_{n=1}^\infty R^{(n)}$, so we have $T\colon R^{(\infty)} \to R^{(\infty)}$, and $\lambda_R(R\setminus R^{(\infty)})=0$. We will show that $\lambda_R$ is $T$-invariant on $R^{(\infty)}$.

Given $n\in \NN$ and $i,j\in \{1,\dots, \ell\}$, let
\[
E_{ij}^n := \tau^{-1}(\{n\}) \cap R_i \cap \sigma^{-n}(R_j).
\]
We will prove the following.

\begin{lemma}\label{lem:Eijn}
Given any $i,j,n$ as above, any $w\in \LLL_n$, and any measurable $Z \subset E_{ij}^n \cap [w]^+$, we have $\lambda_R(T(Z)) = \lambda_R(Z)$.
\end{lemma}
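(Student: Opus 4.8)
The plan is to reduce the statement to an identity between the pieces $\lambda_{R_i}$ and $\lambda_{R_j}$ of $\lambda_R$ and then to transport one to the other by the bijection $\sigma^n$, tracking how the leaf measures and the cocycles $\Delta^{u,s}$ change. Since $Z\subset E_{ij}^n\subset R_i$ and the $R_k$ are pairwise disjoint, $\lambda_R(Z)=\lambda_{R_i}(Z)$; and since $\tau\equiv n$ on $Z$ we have $T(Z)=\sigma^n(Z)\subset R_j$, so $\lambda_R(T(Z))=\lambda_{R_j}(\sigma^n Z)$. Thus it suffices to prove
\[
\lambda_{R_i}(Z)=\lambda_{R_j}(\sigma^n Z).
\]
We may assume $Z\neq\emptyset$, fix a base point $q\in Z$, and set $q':=\sigma^n q\in R_j$; by Theorem \ref{thm:q-indep} we are free to compute $\lambda_{R_i}$ with base $q$ and $\lambda_{R_j}$ with base $q'$.

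\textbf{Geometry of $\sigma^n$ on $Z$.} The key observation is that all points of $Z$ agree in coordinates $0,\dots,n$: coordinates $0,\dots,n-1$ because $Z\subset[w]^+$ with $|w|=n$, and coordinate $n$ because $\sigma^n(Z)\subset R_j$ and $R_j$, being a rectangle, is contained in a single $1$-cylinder. Hence, by \eqref{eqn:shift-bracket}, $\sigma^n$ intertwines the bracket: $\sigma^n[x,y]=[\sigma^n x,\sigma^n y]$ whenever $x,y$ agree in coordinates $0,\dots,n$. Writing $\tilde w=w\,a$ with $R_j\subset[a]^+$, we have $Z\subset R_i\cap[\tilde w]^+\cap\sigma^{-n}(R_j)$, and by Lemma \ref{lem:shift-rectangle} (together with the fact that a finite intersection of rectangles is a rectangle) the set $\sigma^n\big(R_i\cap[\tilde w]^+\cap\sigma^{-n}(R_j)\big)=\sigma^n(R_i)\cap[\tilde w]^-\cap R_j$ is a rectangle contained in $R_j$ on which $\sigma^n$ is a homeomorphism respecting product structure. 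From this I would check that, for precisely the pairs $(z,y)\in\Wu_{R_i}(q)\times\Ws_{R_i}(q)$ with $[y,z]\in Z$, one has $\sigma^n z\in\Wu_{R_j}(q')$ and $\sigma^n y\in\Ws_{R_j}(q')$, and that $\sigma^n y,\sigma^n z$ are exactly the product coordinates of $\sigma^n[y,z]=[\sigma^n y,\sigma^n z]\in\sigma^n Z$ relative to $q'$. So $\sigma^n$ induces a bijection between the effective domains of the two product integrals, under which $\one_Z([y,z])$ becomes $\one_{\sigma^n Z}([\sigma^n y,\sigma^n z])$.

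\textbf{Matching the densities.} It remains to push $m_{R_i,q}=(\iota_q)_*(\mmu_q\times\ms_q)$ forward by $\sigma^n$ and to see that the density $e^{\Delta^u([y,z],y)}e^{\Delta^s([y,z],z)}$ of $\lambda_{R_i}$ becomes the corresponding density for $\lambda_{R_j}$. For the measures, iterating the scaling identities of Theorem \ref{thm:dynamics} expresses $d\mmu_q$ and $d\ms_q$ in terms of $d\mmu_{q'}$ and $d\ms_{q'}$ along the length-$n$ orbit, producing Birkhoff-sum factors that appear with opposite signs on the unstable and stable sides. For the cocycles one uses the elementary identities
\[
\Delta^u(\sigma^n x',\sigma^n x)=\Delta^u(x',x)+\sum_{k=1}^n\big(\ph(\sigma^k x')-\ph(\sigma^k x)\big),\qquad
\Delta^s(\sigma^n x',\sigma^n x)=\Delta^s(x',x)-\big(S_n\ph(x')-S_n\ph(x)\big),
\]
valid when $x'$ lies on the local unstable, resp.\ stable, set of $x$ (both follow by reindexing the defining series). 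Substituting $[y,z]=\sigma^{-n}[\sigma^n y,\sigma^n z]$, $y=\sigma^{-n}(\sigma^n y)$, $z=\sigma^{-n}(\sigma^n z)$ and collecting every exponential factor, the Birkhoff-sum contributions from the leaf-measure scalings and from the cocycle identities are designed to telescope, leaving precisely $e^{\Delta^u([\sigma^n y,\sigma^n z],\sigma^n y)}e^{\Delta^s([\sigma^n y,\sigma^n z],\sigma^n z)}$; an application of Fubini then yields $\lambda_{R_i}(Z)=\lambda_{R_j}(\sigma^n Z)$.

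\textbf{Expected main obstacle.} The delicate point is exactly this last cancellation: one must organize several families of exponential weights --- those from the two leaf-measure scalings, those from the two cocycle identities, and any base-point shift --- so that they combine to the correct density, while simultaneously keeping careful track of \emph{which} points the iterated scaling formulas of Theorem \ref{thm:dynamics} are being applied to, namely the $\sigma^n$-images of the relevant leaf pieces, which the rectangle argument above places inside $R_j$. Getting the bookkeeping of these weights exactly right (in particular being scrupulous about the direction of each scaling and about the endpoints of each Birkhoff sum) is where the real work lies; the geometric part --- coordinate agreement on $[0,n]$ plus Lemma \ref{lem:shift-rectangle} --- is routine by comparison.
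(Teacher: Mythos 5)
Your proposal is correct and follows essentially the same route as the paper's proof: reduce to $\lambda_{R_i}(Z)=\lambda_{R_j}(\sigma^n Z)$, use agreement of coordinates $0,\dots,n$ together with \eqref{eqn:shift-bracket}, Lemma \ref{lem:shift-rectangle}, and $\LLL$-invariance to transport the product structure, then combine the iterated scaling identities of Theorem \ref{thm:dynamics} with the cocycle identities for $\Delta^{u,s}$ under $\sigma^{\pm n}$ so that the Birkhoff sums cancel. The two cocycle identities you state are exactly the paper's \eqref{eqn:Deltau-pushforward} and \eqref{eqn:Deltas-pushforward}, and the telescoping you anticipate is precisely the computation carried out there.
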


Once the lemma is proved, observe that for any measurable $Z\subset R^{(\infty)}$ we have $Z = \bigcup_{i,j,n,w} Z \cap E_{ij}^n \cap [w]^+$, and $T(Z) = \bigcup_{i,j,n,w} T(Z\cap E_{ij}^n \cap [w]^+)$, and thus
\[
\lambda_R(T(Z)) = \sum_{i,j,n,w} \lambda_R(T(Z\cap E_{ij}^n \cap [w]^+))
= \sum_{i,j,n,w} \lambda_R(Z \cap E_{ij}^n \cap [w]^+) = \lambda_R(Z),
\]
which proves Theorem \ref{thm:return-map}. So it only remains to prove Lemma \ref{lem:Eijn}.

To this end, fix $i,j,n,w$ as above, and let $Z\subset E_{ij}^n \cap [w]^+$ be measurable. Then $Z\subset R_i \cap [w]^+$, and since $\tau(x)=n$ for all $x\in Z$, we have $T(Z) = \sigma^n(Z) \subset R_j$.
	
Since we assumed that the family $\RRR$ of rectangles is $\LLL$-invariant as in Definition \ref{def:L-inv}, the rectangles $\sigma^n(R_i \cap [w]^+)$ and $\sigma^{-n}(R_j \cap [w]^-)$ both lie in $\RRR$, and hence by Theorem \ref{thm:shift-cts} we can use \eqref{eqn:shift-RN} for holonomies between unstable leaf measures, as well as its analogue for the stable leaf measures.

Recalling \eqref{eqn:shift-bracket}, given any $y,z\in E_{ij}^n \cap [w]^+$, we have $\sigma^n y,\sigma^nz \in R_j$, and $[\sigma^n y, \sigma^n z] = \sigma^n [y,z]$.

Fix a reference point $q\in E_{ij}^n \cap [w]^+$. As a consequence of equation \eqref{eqn:shift-mxs-scales}, we have
\begin{equation}\label{eqn:mxs-f-scales}
\begin{split}
\int_{\Ws_{R_j}(\sigma^nq)\cap [wq_n]^-} f(y)&e^{-\sum_{k=0}^{n-1}\ph(\sigma^{-k}(y))}\,d\ms_{\sigma^nq}(y)\\
&=\int_{\Ws_{R_i}(q)} f(\sigma^n(y))\,d\ms_{q}(y)
\end{split}
\end{equation}
where $f$ is integrable.  Motivated by this expression, observe that if $y\in \Ws_{R_j}(\sigma^n q)$ and $z\in\Wu_{R_j}(\sigma^n q)$, then
\begin{equation}\label{eqn:Deltau-pushforward}
\begin{split}
\Delta^u([y,z], y)&+\sum_{k=0}^{n-1}\ph(\sigma^{-k} y)\\
&=\sum_{k=0}^{n-1}\ph(\sigma^{-k}[ y,z])+\Delta^u(\sigma^{-n}[ y,z],\sigma^{-n}y).
\end{split}
\end{equation}
Similarly, equation \eqref{eqn:shift-mxu-scales} implies
\begin{equation}\label{eqn:mxu-f-scales}
\int_{\Wu_{R_j}(\sigma^n q)}f(z)\,d\mmu_{\sigma^n q}(y)=\int_{\Wu_{R_{i}}(q)}f(\sigma^ny)e^{-S_n\ph(y)}\,d\mmu_q(y).
\end{equation}
Moreover,
\begin{equation}\label{eqn:Deltas-pushforward}
\begin{split}
\Delta^s([y,\sigma^n z],\sigma^n z)&-S_n\ph(z)\\
&=\Delta^s(\sigma^{-n}[y,\sigma^n z],z)-\sum_{k=0}^{n-1}\ph(\sigma^{-k}[y,\sigma^n z]).
\end{split}
\end{equation}
Using \eqref{eqn:Deltas-pushforward} followed by \eqref{eqn:mxs-f-scales} on the integral
\[
\int_{\Wu_{R_j}(Tq)}\int_{\Ws_{R_j}(Tq)} e^{\Delta^u([y,z],y)+\Delta^s([y,z],z)}\one_{T(Z_n(w))}([y,z])\,d\ms_{Tq}(y)\,dm^u_{Tq}(z),
\]
the term in the exponent becomes
\begin{equation*}
\Delta^u(\sigma^{-n}[\sigma^n y,z],y)+\sum_{k=0}^{n-1}\ph(\sigma^{-k}[\sigma^n y,z]) + \Delta^s([\sigma^ny,z],z).
\end{equation*}
Then applying \eqref{eqn:mxu-f-scales} and then \eqref{eqn:Deltas-pushforward} the exponent becomes
\begin{multline*}
\Delta^u(\sigma^{-n}([\sigma^n y, \sigma^n z]), y)+\sum_{k=0}^{n-1}\ph(\sigma^{-k}[\sigma^n y,\sigma^n z])\\
+\Delta^s(\sigma^{-n}([\sigma^n y, \sigma^n z]), z)-\sum_{k=0}^{n-1}\ph(\sigma^{-k}[\sigma^n y,\sigma^n z]).
\end{multline*}
Since $\sigma^{-n}([\sigma^n y,\sigma^n z])=[y,z]$ this term simplifies to 
\[
\Delta^u([y, z], y)+\Delta^s([y,z], z).
\]
For convenience, let
\[
\rho(y,z)=e^{\Delta^u([y, z], y)+\Delta^s([y,z], z)}.
\]
Altogether, we have shown that given $Z\subset E_{ij}^n \cap [w]^+$, we have
\begin{align*}
\lambda_R(TZ)
& = \int_{\Wu_{R_j}(T q)}\int_{\Ws_{R_j}(Tq)} \rho(y',z')\one_{TZ}[y', z']\,d\ms_{Tq}(y')\,dm^u_{Tq}(z')\\
&=\int_{\Wu_{R_i}(q)}\int_{\Ws_{R_i}(q)} \rho(y,z)\one_{TZ}[\sigma^n y,\sigma^n z]\,d\ms_q(y)\,dm^u_q(z)\\
& = \int_{\Wu_{R_i}(q)}\int_{\Ws_{R_i}(q)} \rho(y,z)\one_{Z}[y,z]\,d\ms_q(y)\,dm^u_q(z)
= \lambda_R(Z).
\end{align*}
This proves Lemma \ref{lem:Eijn}, and as explained in the paragraph following the lemma, we deduce that $\lambda_R$ is invariant with respect to the return map on $R^{(\infty)}$.

\subsection{Proof of Theorem \ref{thm:push-product}}

Now we assume that in addition to the conditions of the previous sections, we have $\int \tau\,d\lambda_R < \infty$ and $\uQ<\infty$. We must show that the measure $\mu$ in \eqref{eqn:es-mu} is positive, finite, and $\sigma$-invariant, and that $\mu/\mu(X)$ is an equilibrium measure with local product structure.

Positivity of $\mu$ is immediate from positivity of $\lambda_R$. Finiteness of $\mu$ follows from the integrability assumption:
\[
\mu(X) = \sum_{n=0}^\infty \lambda_R(Y_n)
= \sum_{n=0}^\infty \lambda_R( \{y\in R : \tau(y) > n \}) = \int \tau \,d\lambda_R.
\]
Shift-invariance of $\mu$ follows immediately from $T$-invariance of $\lambda_R$ by the usual argument. Finally, local product structure in the sense of Definition \ref{def:lps} follows by considering the rectangles $R_i$ together with the rectangles $\sigma^k(E_{ij}^n \cap [w]^+)$ for $w\in \LLL_n$ and $1\leq k < n$. So it only remains to show that $\mu/\mu(X)$ is an equilibrium measure. For this we need the upper Gibbs bound in Corollary \ref{cor:lambda-Gibbs}, which implies that for every $w\in \LLL$ and $1\leq i\leq \ell$, we have $\lambda_{R_i}([w]^+) \leq K e^{\Phi(w)}$ (recall we are assuming $P=0$), and thus
\begin{equation}\label{eqn:lambda-w-upper-bound}
\lambda_R([w]^+)\leq \ell K e^{\Phi(w)}.
\end{equation}
In order to prove that $\mu$ is an equilibrium measure, we will need one more preliminary result. Let $V_j(\ph)=\sup\{|\ph(y)-\ph(z)|:y_{[0,j)}=z_{[0,j)}\}$.  Recall $\tau\colon R^{(\infty)} \to\NN$ is the first return time to $R$ and let $\tau_n(x)$ denote the $n^{\mathrm{th}}$-return time of $x$ to $R$, defined as
\begin{equation}
	\tau_n(x)=\sum_{j=0}^{n-1}\tau(T^jx).
\end{equation}
We will  require the following lemma.  Note that the dependence of $x$ is in the upper index of the sum and not in the terms of the summand itself as $V_j(\ph)$ is defined globally.

\begin{lemma}\label{lem:Phi-Sn-lim}
	If $\tau$ is integrable with respect to $\lambda_R$, then
	\begin{equation}
		\lim_{n\to\infty}\frac{1}{n}\sum_{j=0}^{\tau_n(x)-1}V_j(\ph)=0
	\end{equation}
	for $\lambda_R$-a.e. $x\in R$.
\end{lemma}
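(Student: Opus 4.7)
The plan is to rewrite the sum as a product of two factors, one of which is a standard Birkhoff average of $\tau$ (which is integrable by hypothesis), and the other of which is a Cesàro average of the sequence $V_j(\ph)$, which tends to zero purely by continuity of $\ph$.

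More precisely, since $\tau\geq 1$ we have $\tau_n(x)\to\infty$ for every $x$, so writing
\[
\frac{1}{n}\sum_{j=0}^{\tau_n(x)-1}V_j(\ph)
=\frac{\tau_n(x)}{n}\cdot\frac{1}{\tau_n(x)}\sum_{j=0}^{\tau_n(x)-1}V_j(\ph)
\]
makes sense for all $x$. Now I would argue the two factors separately. For the first factor, Theorem \ref{thm:return-map} says $\lambda_R$ is $T$-invariant; since $\lambda_R$ is finite (as $R$ is a finite union of rectangles and each $\mmu_q,\ms_q$ is finite by Theorem \ref{thm:finite} together with our standing assumption $\uQ<\infty$), we may normalize it to a probability measure and apply the Birkhoff ergodic theorem to the integrable function $\tau$. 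This yields $\tau_n(x)/n\to\EE[\tau\mid\mathcal{I}](x)$ for $\lambda_R$-a.e.\ $x$, where $\mathcal{I}$ is the $\sigma$-algebra of $T$-invariant sets; the limit is a.e.\ finite because $\tau\in L^1(\lambda_R)$.

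For the second factor, $\ph$ is continuous and $X$ is compact, so $V_j(\ph)\to 0$ as $j\to\infty$ (this is stated explicitly in \S\ref{sec:classes}). Hence the Cesàro averages $\frac{1}{N}\sum_{j=0}^{N-1}V_j(\ph)$ tend to $0$ as $N\to\infty$. Applying this with $N=\tau_n(x)\to\infty$ gives that the second factor tends to $0$ for every $x$.

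The product of a pointwise bounded sequence (the first factor) with one going to zero (the second) tends to zero, which is the claim. I do not anticipate a real obstacle here; the only point requiring any care is ensuring that $\lambda_R$ is a finite invariant measure before invoking Birkhoff, which was already established in Theorem \ref{thm:return-map} and via the finiteness of the leaf measures under $\uQ<\infty$.
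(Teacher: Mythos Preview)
Your proof is correct and takes essentially the same approach as the paper: both rely on Birkhoff's theorem (applied to the integrable $\tau$ for the $T$-invariant measure $\lambda_R$) to control $\tau_n(x)/n$, together with $V_j(\ph)\to 0$ from uniform continuity. Your factorization into a Birkhoff average times a Ces\`aro average is a slightly cleaner repackaging of the paper's explicit $\eps$--$N$ splitting, but the mathematical content is identical.
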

\begin{proof}
	For all $\eps>0$, there exists an $N_1$ such that for all $n\geq N_1$ we get $V_n(\ph)<\eps$ by uniform continuity.
	
	By the Birkhoff ergodic theorem, for $\lambda_R$-a.e. $x$, 
	\[\lim_{n\to\infty}\frac{\tau_n(x)}{n}=\tau_\infty(x)\in L^1(\lambda_R),\]
	so there exists an $N_2$ such that for $n\geq N_2$, we have that $0\leq \tau_n(x)/n<\tau_\infty(x)+\eps$.  Additionally, for $\lambda_R$-a.e. $x$, we have that $\tau_\infty(x)<\infty$, so if $n\geq N = \max\{N_1,N_2\}$,
	\begin{align*}
		\frac{1}{n}\sum_{j=0}^{\tau_n(x)-1}V_j(\ph)&=\frac{1}{n}\sum_{j=0}^{N-1}V_j(\ph)+\frac{1}{n}\sum_{j=N}^{\tau_n(x)-1}V_j(\ph)
		 \leq \frac{1}{n}\sum_{j=0}^{N-1} V_j(\ph)+\frac{1}{n}\sum_{j=N}^{\tau_n(x)-1}\eps\\
		& = \frac{1}{n}\sum_{j=0}^{N-1} V_j(\ph)+\frac{\tau_n(x)-N}{n}\eps
		 < \frac{1}{n}\sum_{j=0}^{N-1} V_j(\ph)+\eps(\tau_\infty(x) +\eps).
	\end{align*}
	Letting $n\to\infty$ we get 
	\[\lim_{n\to\infty}\frac{1}{n}\sum_{j=0}^{\tau_n(x)-1}V_j(\ph)\leq\eps(\tau_\infty(x)+\eps).\]
	Since $\eps$ was arbitrary, we get that for $\lambda_R$-a.e. $x\in R$,
	\[\lim_{n\to\infty}\frac{1}{n}\sum_{j=0}^{\tau_n(x)-1}V_j(\ph)=0.\qedhere
\]
\end{proof}

To show that $\mu$ as defined in \eqref{eqn:es-mu} is an equilibrium measure, we will first show that the measure theoretic pressure for $\lambda_R$ is 0 for the induced system when $\ph$ is normalized to have 0 topological pressure.  This will imply that the measure theoretic pressure for $\mu$ is also 0.  We now present the proof of Theorem \ref{thm:push-product}.

\begin{proof}	
	Let $\tilde\ph(x)=S_{\tau(x)}\ph(x)$.  If $X_n=\bigcup_{j=0}^{n-1}\sigma^j Z_n$, then
	\[\mu(X)\int_{X_n}\ph\,d\mu=\int_{Z_n}S_n\ph(x)\,d\lambda_R=\int_{Z_n}\tilde\ph\,d\lambda_R.\]	
	Hence, $\mu(X)\int_{X}\ph\,d\mu=\int_{R}\tilde\ph\,d\lambda_R.$
	
	Similarly, $\mu(X) h_{\mu}(\sigma)=h_{\lambda_R}(T)$ by applying Kac's formula, so we have that 
	\[\mu(X) \left(h_{\mu}(\sigma)+\int_X\ph\,d\mu\right)=h_{\lambda_R}(T)+\int_R{\tilde{\ph}}\,d\lambda_R.\]
	Since we are assuming that the topological pressure $P(\ph)=0$, it is sufficient to show that  $h_{\lambda_R}(T)+\int_R{\tilde{\ph}}\,d\lambda_R\geq 0$.
	
Now we use the Shannon--McMillan--Breiman theorem \cite[p.\ 265]{Glasner} to deduce that if we write
	\[I_n^{\lambda_R}(x):=-\log(\lambda_R([x_0\dots x_{\tau_n(x)-1}]^+)),\]
	then there exists an $L^1(\lambda_R)$ function $J$ such that
	\begin{equation}
		\lim_{n\to\infty}\frac{1}{n}I_n^{\lambda_R}\to J
	\end{equation}
	for $\lambda_R$-a.e. $x$, and
	\[\int_R J\,d\lambda_R=h_{\lambda_R}(T).\]
	By the Birkhoff ergodic theorem, there exists a function $\tilde\ph_\infty\in L^1(\lambda_R)$ such that 
	\[\lim_{n\to\infty}\frac{1}{n}S_n\tilde\ph\to\tilde\ph_{\infty}\]
	for $\lambda_R$-a.e. $x$, and
	\[\int_R\tilde\ph\,d\lambda_R=\int_R\tilde\ph_{\infty}\,d\lambda_R.\]
	Using \eqref{eqn:lambda-w-upper-bound}, we get
	\begin{equation}
		I_n^{\lambda_R}(x)\geq -\log(\ell K)-\Phi(x_0\dots x_{\tau_n(x)-1}),
	\end{equation}
and rearranging terms, we obtain
	\[I_n^{\lambda_R}(x)+\Phi(x_0\dots x_{\tau_n(x)-1})\geq-\log(\ell K),\]
so that in particular,
	\[\varliminf_{n\to\infty}\frac{1}{n}I_n^{\lambda_R}(x)+\frac{1}{n}\Phi(x_0\dots x_{\tau_n(x)-1})\geq0.\]
	Lastly, we must show that $\frac{1}{n}\Phi(x_0,\dots x_{\tau_n(x)-1})\to\tilde\ph_\infty$ a.e.
	\begin{align*}
		\Phi(x_0,\dots x_{\tau_n(x)-1})-S_{\tau_n(x)}\ph(x)
		& = \sup_{y\in[x_0\dots x_{k-1}]^+}(S_{\tau_n(x)}\ph(y)-S_{\tau_n(x)}\ph(x))\\
		& \leq \sum_{j=0}^{\tau_n(x)-1}V_j(\ph).
	\end{align*}
	By Lemma \ref{lem:Phi-Sn-lim}, for $\lambda_R$-a.e. $x\in R$ we have
	\[\varliminf_{n\to\infty}\frac{1}{n}\left(\Phi(x_0\dots x_{\tau_n(x)-1})-S_{\tau_n(x)}\ph(x)\right)=0,\]
	which means that $\frac{1}{n}\Phi(x_0\dots x_{\tau_n(x)-1})\to\tilde\ph_\infty$.
	We conclude that $h_{\lambda_R}(T)+\int \tilde\ph\,d\lambda_R=J(x)+\tilde\ph_{\infty}\geq 0$, so $\mu$ is an equilibrium measure.
\end{proof}

\appendix
\section{Pressure of factorial collections}\label{app:pressure}

We prove Lemma \ref{lem:factorial} following the argument in \cite[\S4.4]{CP19}.
First observe that the argument in \eqref{eqn:sub-mult} showing that $\Lambda_{m+n}(\LLL) \leq \Lambda_m(\LLL) \Lambda_n(\LLL)$ works equally well when $\LLL$ is replaced by any factorial $\DDD$, and thus
\begin{equation}\label{eqn:gen-submult}
\Lambda_{n+m}(\DDD) \leq \Lambda_n(\DDD)\Lambda_m(\DDD).
\end{equation}
Given this submultiplicativity, Fekete's lemma implies that the following limit exists:
\begin{equation}\label{eqn:gen-P(D)}
P(\DDD) := \lim_{n\to\infty} \frac{1}{n}\log \Lambda_n(\DDD)
= \inf_{n\in\NN} \frac 1n \log \Lambda_n(\DDD).
\end{equation}
This limit is finite whenever $\DDD$ is non-empty; indeed, if $A$ is the alphabet for the shift space, then $e^{-n\|\ph\|} \leq \Lambda_n(\DDD) \leq e^{n\|\ph\|} (\#A)^n$ for all $n$, which gives $-\|\ph\| \leq P(\DDD) \leq \|\ph\| + \log(\#A)$.

To prove Lemma \ref{lem:factorial}, it suffices to prove that $P(X_\DDD,\ph) \geq P(\DDD)$, since the other inequality is immediate from the inclusion $\LLL(X_\DDD) \subset \DDD$.
To this end, fix $\eps>0$ and use \eqref{eqn:gen-P(D)} to get $N_0$ such that for all $n\geq N_0$ we have
\begin{equation}\label{eqn:P-eps}
e^{n P(\DDD)} \leq \Lambda_n(\DDD) \leq e^{n(P(\DDD)+\eps)}.
\end{equation}
Given $j\in \NN$, define
\begin{equation}\label{eqn:Dj}
\DDD^{(j)} := \{w \in \DDD : \text{there exist }u,v\in \DDD_j \text{ such that } uwv \in \DDD \}.
\end{equation}
For every $N,j \in \NN$, we have $\DDD_{N+2j} \subset \DDD_j \DDD_N^{(j)} \DDD_j$; that is, every word $w\in \DDD_{N+2j}$ has the property that $w_{(j,j+N]} \in \DDD^{(j)}$.
The same argument that gave \eqref{eqn:gen-submult} gives
\begin{equation}\label{eqn:N2j}
\Lambda_{N+2j}(\DDD) \leq \Lambda_j(\DDD)^2 \Lambda_N(\DDD^{(j)}).
\end{equation}
Moreover, since $\DDD^{(j)}$ is factorial for each $j$, we can iterate \eqref{eqn:gen-submult} to obtain
\[
\Lambda_{kn}(\DDD^{(j)}) \leq \Lambda_n(\DDD^{(j)})^k
\]
for every $k,n,j\in \NN$. Using \eqref{eqn:P-eps} and \eqref{eqn:N2j} with $N=kn$, $j\geq N_0$, this gives
\begin{equation}
e^{(kn+2j)P(\DDD)} \leq \Lambda_{kn+2j}(\DDD) \leq \Lambda_j(\DDD)^2 \Lambda_{kn}(\DDD^{(j)}) 
\leq e^{2j(P(\DDD)+\eps)} \Lambda_n(\DDD^{(j)})^k.
\end{equation}
Dividing both sides by $e^{2jP(\DDD)}$ gives
\[
e^{knP(\DDD)} \leq e^{2j\eps} \Lambda_n(\DDD^{(j)})^k,
\]
at which point we can take logs and divide by $kn$ to get
\[
P(\DDD) \leq \frac{2j}{kn}\eps + \frac 1n \log \Lambda_n(\DDD^{(j)}).
\]
Fixing $n\in \NN$ and $j\geq N_0$, we can send $k\to\infty$ and obtain
\begin{equation}\label{eqn:PDj}
P(\DDD) \leq \frac 1n \log \Lambda_n(\DDD^{(j)}).
\end{equation}
Observe that the sets $\DDD^{(j)}$ decrease monotonically to $\LLL(X_\DDD)$: in particular, for every $n\in \NN$, we have
\[
\DDD_n = \DDD_n^{(0)} \supset \DDD_n^{(1)} \supset \DDD_n^{(2)} \supset \cdots \supset \LLL_n(X_\DDD)
\quad\text{and}\quad
\LLL_n(X_D) = \bigcap_{j\in \NN} \DDD_n^{(j)}.
\]
Since $\DDD_n$ is finite, there is $j=j(n) \geq N_0$ such that $\DDD^{(j)}_n = \LLL_n(X_\DDD)$, so \eqref{eqn:PDj} gives
\[
P(\DDD) \leq \frac 1n \log \Lambda_n(\DDD^{(j(n))}) = \frac 1n \log \Lambda_n(X_\DDD)
\]
for all $n$. Sending $n\to\infty$ proves Lemma \ref{lem:factorial}.

\section{Uniqueness with non-uniform specification}\label{app:unique}

\subsection{Non-uniform specification for flows}

We will deduce Theorem \ref{thm:CT-PYY} from \cite[Theorem A]{PYY22}. First we need the following definitions from \cite{CT16,PYY22}. Throughout, $(f_s)_{s\in\RR}$ will denote a continuous flow on a compact metric space $(M,d)$.

\begin{definition}\label{def:almost-expansive}
For $\eps >0$ and $x \in M$, the \emph{two-sided infinite Bowen ball} at $x$ is
\[
\Gamma_{\eps}(x) = \{y \in M : d(f_s(x),f_s(y)) \leq \eps \text{ for all } s \in \RR\}.
\]
A flow $f_s$ is \emph{expansive at scale $\eps$} if there exists an $s_0>0$, such that $\Gamma_{\eps}(x) \subset f_{[-s_0,s_0]}(x)$ for every $x \in X$.
\end{definition}

We identify a pair $(x,T) \in M \times [0,\infty)$ with the parametrized curve of the orbit segment starting at $x$ flowing for time $T$ under $f_s$.  Thus $M\times [0,\infty)$ represents the space of finite-length orbit segments.

\begin{definition}\label{def:decomp}
A \emph{decomposition} $(\PPP,\GGG,\SSS)$ for $M \times [0,\infty)$ consists of three collections $\PPP,\GGG,\SSS \subset M \times [0,\infty)$ and three functions $P,G,S \colon M \times [0,\infty) \to [0,\infty)$ such that for every $(x,T) \in M \times [0,\infty)$, the values $P(x,T)$, $G(x,T)$, and $S(x,T)$ satisfy $T = P(x,T)+G(x,T)+S(x,T)$, and 
\[(x,P(x,T)) \in \PPP, \quad (f_{P(x,T)}(x),G(x,T)) \in \GGG, \quad (f_{P(x,T)+G(x,T)}(x),S(x,T)) \in \SSS.\]
\end{definition}

As in Theorem \ref{thm:CT-PYY}, we will require $\GGG$ to have specification and the Bowen property, and $\PPP,\SSS$ to be small from the point of view of pressure. In this flow setting, these definitions take the following form.

\begin{definition}\label{def:spec}
A collection of orbit segments $\GGG \subset M \times [0,\infty)$ has \emph{specification} at scale $\delta$ if there exists $t > 0$ such that for every finite sequence $\{(x_i,T_i)\}_{i=1}^k \subset \GGG$, there exists a point $y$ such that writing $s_j = \sum_{i=1}^{j-1} (T_i + t)$, we have
\[
d_{T_j}(f_{s_{j}}(y),x_j) < \delta \text{ for every } 1 \leq j\leq k.
\]
\end{definition}

Given $(x,T) \in M\times [0,\infty)$ and $\eps>0$, the \emph{(closed) Bowen ball} of order $T$ around $x$ with radius $\eps$ is
$\bar{B}_T(x,\eps) = \{y \in M : d(f_s(x),f_s(y)) \leq \eps \text{ for all }s\in [0,T]\}$.  
The remaining definitions involve a continuous potential $\hat{\ph} \colon M \to \RR$.
Define
\begin{equation*}
\Phi_\eps(x,T) = \sup_{y\in \bar{B}_T(x,\eps)}\int_0^T \hat{\ph}(f_s(y)) \,ds.
\end{equation*}
For $\eps = 0$, we have $\Phi_0(x,T) = \int_0^T \hat{\ph}(f_s(x))\,ds$.

\begin{definition}\label{def:Bowen-property}
$\hat{\ph}$ has the \emph{Bowen property at scale} $\eps>0$ on $\GGG \subset M \times [0,\infty)$ if there exists $K>0$ such that
\[
\sup\{|\Phi_0(x,T)-\Phi_0(y,T)| : (x,T) \in \GGG,\, y \in \bar{B}_T(x,\eps)\} \leq K.
\]
\end{definition}

The prefix and suffix collections $\PPP,\SSS$ are required to be small in the sense of pressure:

\begin{definition}\label{def:2-scale-P}
	For $\CCC \subset M \times [0,\infty)$, let $\CCC_T = \{x \in M : (x,T) \in \CCC\}$, and define 
	\[
	\Lambda(\CCC,\hat{\ph},\delta,\eps, T) = \sup\left\{\sum_{x \in E} e^{\Phi_\eps(x,T)} : E \subset \CCC_T \text{ is } (T,\delta)\text{-separated}\right\}.
	\]
	The \emph{pressure of $\CCC$ at scales $\delta,\eps$} is 
	\[
	P(\CCC,\hat{\ph},\delta,\eps) = \varlimsup_{T\to\infty} \frac{1}{T} \log(\Lambda(\CCC,\hat{\ph},\delta,\eps, T)).
	\]
\end{definition}

Since $T$ takes continuous values, care must be exercised in applying Definition \ref{def:2-scale-P} in a situation where removing short pieces from the beginning and end of an orbit segment in $\CCC$ could produce an orbit segment not in $\CCC$; one could have many orbit segments in $\CCC$ with similar but not identical values of $T$, and Definition \ref{def:2-scale-P} would never see all of them at once. Thus we make one final definition:  let
\[
[\CCC] = \{(x,n) \in M \times \NN : (f_{-a}(x),n+a+b) \in \CCC \text{ for some } a,b \in [0,1)]\}
\]
be the collection of integer-length orbit segments that are obtained by removing pieces of length $\leq 1$ from the ends of elements of $\CCC$.

Now we can state a general result for flows, which is an immediate consequence of \cite[Theorem A, (3.2), and Lemma 6.6]{PYY22}.\footnote{The result in \cite{PYY22} uses a non-uniform expansivity assumption which is more complicated to define. The uniform expansivity assumption is satisfied in our application and implies the condition there. Additionally, \cite{PYY22} requires ``tail (W)-specification," which follows from Definition \ref{def:spec}.} 

\begin{theorem}\label{thm:PYY}
Let $(f_s)_{s\in\RR}$ be a continuous flow on a compact metric space $M$, and $\hat\ph \colon M \to \RR$ a continuous potential function.  Suppose there exist $\eps,\delta>0$ with $\eps\geq 2000\delta$ such that the flow is expansive at scale $\eps$, and it admits a decomposition $(\PPP,\GGG,\SSS)$ of orbit segments with the following properties:
\begin{enumerate}
\item \label{item:PYY-spec}	$\GGG$ has specification at scale $\delta$;
\item 	$\ph$ has the Bowen property at scale $\eps$ on $\GGG$;
\item 	$P([\PPP] \cup [\SSS],\hat\ph,\delta,\eps) < P(\hat\ph)$. 
\end{enumerate}
Then there exists a unique equilibrium measure for the potential $\hat\ph$, and for every $\gamma \in [8\delta, 200\delta]$, we have $\varlimsup_{T\to\infty} \Lambda(M\times [0,\infty), \hat\ph,2\gamma,2\gamma,T) e^{-TP(\hat\ph)} < \infty$.
\end{theorem}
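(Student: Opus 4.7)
The plan is to verify that the hypotheses of \cite[Theorem A]{PYY22} are satisfied under our assumptions, and then to read off the two conclusions directly from that theorem together with the supplementary equation (3.2) and Lemma 6.6 of the same reference. Because the theorem here is stated in language already very close to \cite{PYY22}, the proof is essentially a dictionary check: what needs to be shown is that the definitions used in our statement (uniform expansivity at scale $\eps$, specification as in Definition \ref{def:spec}, the Bowen property as in Definition \ref{def:Bowen-property}, and the pressure gap for $[\PPP]\cup[\SSS]$) imply the (possibly weaker or differently phrased) hypotheses appearing in \cite{PYY22}.

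First I would reconcile the expansivity hypothesis. In \cite{PYY22} a non-uniform notion of expansivity is used (defined via sets of non-expansive points having pressure strictly less than $P(\hat\ph)$); it is immediate that uniform expansivity at scale $\eps$ in the sense of Definition \ref{def:almost-expansive} implies this non-uniform version, since the set of non-expansive points is empty. Next I would match the specification property: the condition in Definition \ref{def:spec} asks that the concatenating gaps have length exactly $t$, while \cite{PYY22} uses \emph{tail} (W)-specification, allowing the gap to be any length in $[0,t]$ depending on the orbit segments. The condition here is formally stronger (one can always pad with a constant-length gap), so this hypothesis transfers without trouble. The Bowen property on $\GGG$ and the pressure gap on $[\PPP]\cup[\SSS]$ are stated verbatim in our formulation and in \cite{PYY22}, so no translation is needed there.

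With these identifications in hand, uniqueness of the equilibrium measure follows immediately from \cite[Theorem A]{PYY22}. For the second conclusion, I would invoke \cite[(3.2)]{PYY22}, which provides upper partition-sum bounds at scales $(2\gamma,2\gamma)$ for $\gamma$ in a suitable window around $\delta$, combined with \cite[Lemma 6.6]{PYY22}, which converts such bounds into the statement that $\Lambda(M\times[0,\infty),\hat\ph,2\gamma,2\gamma,T)e^{-TP(\hat\ph)}$ remains bounded as $T\to\infty$ for $\gamma\in[8\delta,200\delta]$. Since these results are already assembled in \cite{PYY22} under exactly the hypotheses we have verified, no additional work is needed beyond the verification of the translations above.

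The main (and only) obstacle I anticipate is the careful matching of the expansivity and specification conditions, because \cite{PYY22} uses the weakest possible forms of both, and one needs to confirm that the stronger versions given here do indeed imply them without any hidden compatibility issues (for example, the scale at which specification is demanded must be compatible with the scale used in the expansivity hypothesis and with the factor $2000$ relating $\eps$ and $\delta$). Once this bookkeeping is done, the theorem is essentially a restatement of \cite{PYY22} and the two conclusions follow directly.
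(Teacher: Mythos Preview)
Your proposal is correct and takes essentially the same approach as the paper: the paper does not give a standalone proof but simply states that the theorem is an immediate consequence of \cite[Theorem A, (3.2), and Lemma 6.6]{PYY22}, with a footnote noting exactly the two translations you identify (uniform expansivity implies the non-uniform expansivity used in \cite{PYY22}, and specification as in Definition \ref{def:spec} implies tail (W)-specification). Your write-up is in fact more detailed than what appears in the paper.
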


\subsection{A suspension flow}

The strategy we use to obtain Theorem \ref{thm:CT-PYY} from Theorem \ref{thm:PYY} is to compare the shift space $X$ to a suspension flow over $X$.  Let $\hat{X} = X\times \RR / {\sim}$, where we quotient by the equivalence relation $(x,a+n) \sim (\sigma^nx, a)$ for all $x\in X$, $a\in \RR$, and $n\in \ZZ$. Observe that we also have $\hat{X} = X \times [0,1] / {\sim}$. We will routinely write $(x,a)$ as a shorthand for its equivalence class.

Let $f_s \colon \hat{X} \to \hat{X}$ be the suspension flow over $X$ with a constant roof function of 1; that is, the quotient of the vertical flow on $X\times \RR$.
Fix $\theta = \frac 1{2000}$, $\eps \in (\theta^2, \theta)$, and $\delta = \eps\theta$, so the relation between $\eps$ and $\delta$ in Theorem \ref{thm:PYY} is satisfied. Equip the shift space $X$ with the metric
\[
d(x,y) := \theta^{n}\text{, where } n = \min \{|m| : x_m \neq y_m\},
\]
and define a metric $\hat{d}$ on $\hat{X}$ following the procedure in \cite[\S4]{BW72}, so that for all $x,y\in X$ and $a\in [0,1]$, we have
\begin{align}\label{eqn:dxya}
\hat{d}((x,a),(y,a)) &\leq ad(\sigma x, \sigma y) + (1-a) d(x,y), \\
\label{eqn:dxy12}
\hat{d}((x,\tfrac 12), (y,a)) &\geq \min(d(x,y), d(\sigma x, \sigma y)).
\end{align}

\begin{lemma}\label{lem:hat-d-leq}
If $\hat{d}((x,\frac 12),(y,a)) \leq \eps$, then $x_0 = y_0$ and $x_1 = y_1$.
\end{lemma}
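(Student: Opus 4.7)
The plan is to read the desired conclusion directly off the lower bound \eqref{eqn:dxy12} together with the definition of the metric $d$ and the chosen size of $\eps$. Since $\hat{d}((x,\tfrac12),(y,a)) \leq \eps$, equation \eqref{eqn:dxy12} immediately gives
\[
\min(d(x,y),\, d(\sigma x, \sigma y)) \leq \eps.
\]
So I would split into two cases according to which of the two distances realizes the minimum.

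In either case, the key observation is about the relationship between $\eps$ and $\theta$. Because $\theta^2 < \eps < \theta$, any expression of the form $\theta^n$ satisfying $\theta^n \leq \eps$ forces $n \geq 2$ (the strict inequality $\theta^n \leq \eps < \theta$ rules out $n \leq 1$, and the existence of such $n$ is guaranteed by $\theta^2 < \eps$). In Case 1, $d(x,y) = \theta^n \leq \eps$ thus yields $n \geq 2$, i.e., $x_m = y_m$ for all $|m| < 2$; in particular $x_0 = y_0$ and $x_1 = y_1$. In Case 2, the same reasoning applied to $\sigma x, \sigma y$ gives $(\sigma x)_m = (\sigma y)_m$ for $|m| < 2$, which rewrites as $x_{m+1} = y_{m+1}$ for $m \in \{-1, 0, 1\}$, and in particular $x_0 = y_0$ and $x_1 = y_1$.

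Since both cases yield the desired conclusion, the lemma follows. There is no real obstacle here: the argument is essentially just bookkeeping with the constants chosen in the paragraph preceding the lemma, and the whole proof should fit in a handful of lines. The only minor care needed is to verify the case $n=2$ is allowed (hence the assumption $\eps > \theta^2$) while $n \leq 1$ is excluded (hence $\eps < \theta$); both hold by the choice $\eps \in (\theta^2, \theta)$.
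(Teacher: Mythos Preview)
Your proof is correct and follows essentially the same approach as the paper: apply \eqref{eqn:dxy12} to get $\min(d(x,y),d(\sigma x,\sigma y))\leq \eps<\theta$, then in each case use the definition of $d$ to conclude that the relevant pair of sequences agree on indices with $|m|\leq 1$. The only superfluous remark is your appeal to $\eps>\theta^2$; that inequality plays no role in the argument (it is only used elsewhere to fix the scales), and the paper's proof does not invoke it.
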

\begin{proof}
If $\hat{d}((x,\frac 12), (y,a)) \leq \eps < \theta$, then \eqref{eqn:dxy12} implies that either $d(x,y) < \theta$ or $d(\sigma x, \sigma y) < \theta$. In the first case, we have $x_{-1} x_0 x_1 = y_{-1} y_0 y_1$; in the second case, we have $x_0 x_1 x_2 = y_0 y_1 y_2$.
\end{proof}

A direct consequence of Lemma \ref{lem:hat-d-leq} is that the suspension flow is expansive at scale $\eps$. Indeed, if $(y,b) \in \Gamma_\eps((x,a))$, then without loss of generality we can assume that $(y,b)$ is the element if its equivalence class with  $|b-a|\leq \frac 12$, and applying $f_{n + \frac 12 - a}$ to both $(y,b)$ and $(x,a)$ for every $n\in \ZZ$, Lemma \ref{lem:hat-d-leq} gives $x = y$.

\begin{lemma}\label{lem:hat-d-Bn}
If $x,y\in X$ and $n\in \NN$ are such that $x_k = y_k$ for all $-2 \leq k \leq n+2$, then $(y,0) \in B_n((x,0),\xi)$ for each $\xi>\theta^3$.
\end{lemma}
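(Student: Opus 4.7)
The plan is to check the Bowen-ball condition directly: show that $\hat d(f_s(x,0), f_s(y,0)) \leq \theta^3$ for every $s \in [0,n]$, which immediately gives $(y,0) \in B_n((x,0),\xi)$ whenever $\xi > \theta^3$.

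First I would parametrize any $s \in [0,n]$ as $s = m + a$ with $m \in \{0,1,\dots,n\}$ and $a \in [0,1)$ (using $m=n$, $a=0$ at the endpoint $s=n$). Since the suspension flow has constant roof $1$, we have $f_s(x,0) = (\sigma^m x, a)$ and $f_s(y,0) = (\sigma^m y, a)$, so the Bowen--Walters metric bound \eqref{eqn:dxya} gives
\[
\hat d\bigl(f_s(x,0), f_s(y,0)\bigr) \;\leq\; a\, d(\sigma^{m+1} x, \sigma^{m+1} y) \;+\; (1-a)\, d(\sigma^m x, \sigma^m y).
\]

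The core observation is a simple index count. For any $m \in \{0,\dots,n\}$, the hypothesis $x_k = y_k$ for $-2 \leq k \leq n+2$ translates into $(\sigma^m x)_j = (\sigma^m y)_j$ for $-2 - m \leq j \leq n+2-m$, and this range contains $\{-2,-1,0,1,2\}$. Hence the smallest $|j|$ at which $\sigma^m x$ and $\sigma^m y$ disagree is at least $3$, so $d(\sigma^m x, \sigma^m y) \leq \theta^3$. This controls the $(1-a)$ term for every admissible $m$. For the coefficient $a$ term, replacing $m$ by $m+1$ the same argument gives $d(\sigma^{m+1}x, \sigma^{m+1}y) \leq \theta^3$ as long as $m+1 \leq n$; this is automatic when $a > 0$, because $s = m+a \leq n$ with $a > 0$ forces $m \leq n-1$. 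In the remaining case $a = 0$ the coefficient of the uncontrolled term vanishes, and the bound reduces to $\hat d \leq d(\sigma^m x, \sigma^m y) \leq \theta^3$.

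I do not expect any real obstacle here: the lemma is essentially a routine verification combining the explicit inequality \eqref{eqn:dxya} from the Bowen--Walters construction with the index bookkeeping enabled by the two-symbol buffer $[-2, n+2]$ on each side. The only mildly delicate point is the endpoint $s = n$, where the factor $a$ on the uncontrolled distance $d(\sigma^{n+1}x, \sigma^{n+1}y)$ happens to be $0$, so no further argument is required there.
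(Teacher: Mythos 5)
Your proof is correct and is essentially the paper's argument: the paper applies the same inequality \eqref{eqn:dxya} to $(\sigma^j x,a)$ and $(\sigma^j y,a)$, just phrased contrapositively (a distance $\geq\xi>\theta^3$ at some time forces a disagreement at some index $k\in\{-2,\dots,3\}$ relative to $\sigma^j$, hence in $[-2,n+2]$). Your direct version, including the careful treatment of the endpoint $s=n$, matches the paper's index bookkeeping exactly.
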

\begin{proof}
If $\hat{d}((x,a),(y,a)) \geq \xi > \theta^3$, then either $d(x,y) > \theta^3$ or $d(\sigma x, \sigma y) > \theta^3$, which in turn implies that $x_k \neq y_k$ for some $k \in \{-2,-1,0,1,2,3\}$. Applying this to $(\sigma^j x, a)$ and $(\sigma^j y, a)$ for all $0\leq j < n$ and $a\in [0,1]$ gives the result.
\end{proof}

Given a continuous $\ph \colon X\to \RR$, we define $\hat{\ph} \colon \hat{X} \to \RR$ by
\[\hat{\ph}(x,a) = a\ph(\sigma x) + (1-a)\ph(x).\]
This choice of $\hat\ph$ leads to the following property.

\begin{lemma}\label{lem:hat-ph}
Given any $x\in X$ and $n\in \NN$, we have
\[
|\Phi_0((x,0),n) - S_n\ph(x) | \leq \|\ph\|.
\]
\end{lemma}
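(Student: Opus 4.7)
The plan is to compute $\Phi_0((x,0),n)$ directly by splitting the integral over $[0,n]$ into unit pieces corresponding to the flow's passage across consecutive copies of the base, and then exploit the fact that the roof function is identically $1$ and $\hat\ph$ is defined to be affine on each fiber.

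First I would observe that with the constant roof function equal to $1$, the suspension flow acts on representatives by $f_s(x,0) = (\sigma^k x,\, s-k)$ whenever $s \in [k,k+1]$ and $k\in\NN$. Plugging this into the definition of $\hat\ph$ gives
\[
\hat\ph(f_s(x,0)) \;=\; (s-k)\,\ph(\sigma^{k+1}x) \;+\; (1-(s-k))\,\ph(\sigma^k x)
\qquad\text{for } s\in[k,k+1].
\]
The next step is to integrate: on each unit interval the substitution $t=s-k$ reduces the integral to $\int_0^1 [t\,\ph(\sigma^{k+1}x) + (1-t)\ph(\sigma^k x)]\,dt = \tfrac12\ph(\sigma^k x) + \tfrac12\ph(\sigma^{k+1}x)$.

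Summing over $k=0,\dots,n-1$ produces a trapezoidal-rule sum, which telescopes (in the sense that each interior term appears with full weight while only the endpoints get weight $\tfrac12$):
\[
\Phi_0((x,0),n) \;=\; \tfrac12\ph(x) + \sum_{k=1}^{n-1}\ph(\sigma^k x) + \tfrac12\ph(\sigma^n x)
\;=\; S_n\ph(x) + \tfrac12\bigl(\ph(\sigma^n x) - \ph(x)\bigr).
\]
Therefore $|\Phi_0((x,0),n) - S_n\ph(x)| = \tfrac12|\ph(\sigma^n x) - \ph(x)| \leq \|\ph\|$, as claimed.

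There is no real obstacle here: once the flow is written in coordinates, the affine definition of $\hat\ph$ in $a$ makes every fiber integral elementary. The only thing to watch is the identification $(x,1)\sim(\sigma x,0)$, which is why the two endpoint terms combine cleanly rather than producing a discontinuity in the integrand. Note that the bound $\tfrac12|\ph(\sigma^n x)-\ph(x)|\leq\|\ph\|$ is quite loose; one could sharpen it to $\|\ph\|_\infty$ or even record the exact telescoping identity, but the stated bound is all that is needed for the application in Theorem~\ref{thm:CT-PYY}.
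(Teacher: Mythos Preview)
Your proof is correct and follows essentially the same route as the paper: both compute the unit-interval integral of $\hat\ph$ along the flow to obtain $\Phi_0((x,0),n) = S_n\ph(x) + \tfrac12(\ph(\sigma^n x)-\ph(x))$, from which the bound is immediate. Your version simply spells out the flow coordinates $f_s(x,0)=(\sigma^k x,s-k)$ a bit more explicitly before integrating.
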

\begin{proof}
Observe that $\int_0^1 \hat\ph(x,a) \,da = \ph(x) + \frac 12 (\ph(\sigma x) - \ph(x))$, so
\begin{equation}\label{eqn:int-hat-ph}
\begin{split}
\int_0^n \hat\ph(f_s(x,0)) \,ds & =  \sum_{k=0}^{n-1} \ph(\sigma^k x) + \frac 12 (\ph(\sigma^{k+1} x) - \ph(\sigma^k x)) \\
& = S_n\ph(x) + \frac 12 (\ph(\sigma^n x) - \ph(x)),
\end{split}
\end{equation}
which proves the lemma.
\end{proof}

There is a natural bijection between shift-invariant probability measures $\mu$ on $X$ and flow-invariant probability measures $\hat\mu$ on $\hat X$, obtained by taking the product of $\mu$ with Lebesgue measure on $[0,1]$. This bijection preserves entropy and has the property that $\int \ph\,d\mu = \int \hat\ph\,d\hat\mu$. From the variational principle, we conclude that $P(X,\ph) = P(\hat{X},\hat\ph)$, and that $\mu$ is an equilibrium measure for $(X,\sigma,\ph)$ if and only if $\hat\mu$ is an equilibrium measure for $(\hat{X},(f_s),\hat\ph)$. In particular, the uniqueness conclusion in Theorem \ref{thm:PYY} implies the uniqueness conclusion of Theorem \ref{thm:CT-PYY}. The fact that the upper bound in Theorem \ref{thm:PYY} implies $\uQ<\infty$ in Theorem \ref{thm:CT-PYY} is a consequence of the following.

\begin{lemma}\label{lem:counting-susp}
Given $\gamma \in [8\delta,200\delta]$, we have
\[
\Lambda_n(X) \leq e^{10\|\ph\|} \Lambda(M\times [0,\infty), \hat\ph, 2\gamma, 2\gamma, n-1).
\]
\end{lemma}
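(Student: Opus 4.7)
The plan is, for each word $w\in\LLL_n$, to select a representative $x_w\in[w]^+$ realizing the supremum $\Phi(w)=S_n\ph(x_w)$ (possible by compactness of $[w]^+$ and continuity of $\ph$), and to show that the set $E:=\{(x_w,0):w\in\LLL_n\}\subset M$ is a $(n-1,2\gamma)$-separated subset of $\hat X$ whose Birkhoff weights $\Phi_{2\gamma}((x_w,0),n-1)$ dominate $\Phi(w)$ up to an additive error of $2\|\ph\|$. Once both items are in hand, summing over $w$ and using Definition~\ref{def:2-scale-P} with $\CCC=M\times[0,\infty)$ yields
\[
\Lambda(M\times[0,\infty),\hat\ph,2\gamma,2\gamma,n-1)\;\geq\;\sum_{w\in\LLL_n}e^{\Phi_{2\gamma}((x_w,0),n-1)}\;\geq\;e^{-2\|\ph\|}\Lambda_n(X),
\]
and since $e^{2\|\ph\|}\leq e^{10\|\ph\|}$, the claim follows (the extra slack swallows the small-$n$ edge cases, where one verifies the bound directly from $\Lambda_n(X)\leq (\#A)^n e^{n\|\ph\|}$).

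The Birkhoff-weight comparison is the easy half: because $(x_w,0)$ lies trivially in its own Bowen ball $\bar B_{n-1}((x_w,0),2\gamma)$, the same computation that drives Lemma~\ref{lem:hat-ph} — namely \eqref{eqn:int-hat-ph} — gives
\[
\Phi_{2\gamma}((x_w,0),n-1)\;\geq\;\int_0^{n-1}\hat\ph(f_s(x_w,0))\,ds\;=\;S_{n-1}\ph(x_w)+\tfrac12\bigl(\ph(\sigma^{n-1}x_w)-\ph(x_w)\bigr),
\]
which is $\geq S_{n-1}\ph(x_w)-\|\ph\|\geq S_n\ph(x_w)-2\|\ph\|=\Phi(w)-2\|\ph\|$.

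The main obstacle is the separation: given distinct $w,w'\in\LLL_n$, I need a time $s\in[0,n-1]$ at which $\hat d(f_s(x_w,0),f_s(x_{w'},0))>2\gamma$. The plan is to locate an index $k\in\{0,\dots,n-1\}$ with $w_k\neq w'_k$ and to evaluate the flow at $s:=\max(k-\tfrac12,\tfrac12)\in[0,n-1]$, so that $f_s(x_w,0)=(\sigma^j x_w,\tfrac12)$ with $j=s-\tfrac12\in\{0,k-1\}$; by construction, $\sigma^j x_w$ and $\sigma^j x_{w'}$ then disagree at position $0$ or position $1$, and the contrapositive of Lemma~\ref{lem:hat-d-leq} forces $\hat d(f_s(x_w,0),f_s(x_{w'},0))>\eps$. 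The quantitative setup $\delta=\eps\theta$ with $\theta=1/2000$ was chosen precisely so that $2\gamma\leq 400\delta=\eps/5<\eps$, upgrading this to separation at the required scale $2\gamma$ and thereby certifying $E$ as an admissible separated set for Definition~\ref{def:2-scale-P}.
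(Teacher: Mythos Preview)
Your proof is correct and follows the same separation argument as the paper (using Lemma~\ref{lem:hat-d-leq} at half-integer times to force $x_{[0,n)}=y_{[0,n)}$ whenever two points stay $2\gamma$-close along the orbit). Your weight comparison, however, is cleaner than the paper's: by choosing $x_w$ to realize the supremum $\Phi(w)=S_n\ph(x_w)$ and simply noting that $(x_w,0)$ lies in its own Bowen ball, you get $\Phi_{2\gamma}((x_w,0),n-1)\geq\Phi(w)-2\|\ph\|$ directly from \eqref{eqn:int-hat-ph}. The paper instead works with an arbitrary $y\in[w]^+$, invokes Lemma~\ref{lem:hat-d-Bn} to place $(y,2)$ in a Bowen ball of $(x_w,2)$ at scale $\delta$, and then passes from scale $\delta$ to scale $2\gamma$ and from the segment of length $n-5$ to length $n-1$, accumulating the constant $10\|\ph\|$. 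Your route avoids Lemma~\ref{lem:hat-d-Bn} entirely and yields the sharper constant $e^{2\|\ph\|}$; the paper's extra generality (handling all $y\in[w]^+$) is unnecessary once the supremum is attained. Both proofs tacitly assume $n$ is large enough for the relevant time interval to be nonempty, which is harmless since the lemma is only used via a $\varlimsup$.
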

\begin{proof}
For each $w\in \LLL_n$, choose $x_w \in [w]^+$, and consider the set $E = \{(x_w, 0) : w\in \LLL_n\}$. Observe that if $x,y\in X$ are such that $(y,0) \in B_{n-1}((x,0),2\gamma)$, then we have $\hat{d}((\sigma^k x,\frac 12),(\sigma^k y,\frac 12)) < 2\gamma < \eps$ for all $0\leq k<n-1$, and by Lemma \ref{lem:hat-d-leq}, this gives $x_{[0,n)} = y_{[0,n)}$. Thus $E$ is $(n-1,2\gamma)$-separated for the flow. Moreover, for every $w\in \LLL_n$ and $y\in [w]^+$, we have $(y,2) \in B_{n-5}((x,2),\delta)$ by Lemma \ref{lem:hat-d-Bn}, so
\begin{align*}
S_n\ph(y) &\leq 5\|\ph\| + S_{n-5}\ph(\sigma^2 y) \leq 6\|\ph\| + \Phi_{\delta}((x,2),n-5) \\
&\leq 10\|\ph\| + \Phi_{2\gamma}((x,0),n-1),
\end{align*}
where the second inequality uses Lemma \ref{lem:hat-ph}.
Taking a supremum over $y\in [w]^+$ gives $\Phi(w) \leq 10\|\ph\| + \Phi_{2\gamma}((x,0),n-1)$. Summing over $w$ proves the lemma.
\end{proof}

To deduce Theorem \ref{thm:CT-PYY} from \ref{thm:PYY}, it remains to show that if $X$ and $\ph$ satisfy the conditions of Theorem \ref{thm:CT-PYY}, then $\hat{X}$ and $\hat{\ph}$ satisfy the conditions of Theorem \ref{thm:PYY}. Note that expansivity of the suspension flow was proved above via Lemma \ref{lem:hat-d-leq}. We address the remaining conditions in the next section.

\subsection{Non-uniform specification for the suspension flow}

\subsubsection{A decomposition}

The hypothesis of Theorem \ref{thm:CT-PYY} provides a decomposition $\CCC^p$, $\GGG$, $\CCC^s$ of the language $\LLL$ of the shift space $X$; Figure \ref{fig:orbit-decomp} illustrates a corresponding decomposition of orbit segments for the suspension flow.  

\begin{figure}[htbp]
\includegraphics[width=.4\textwidth]{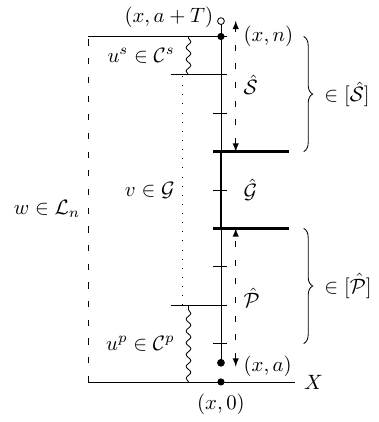}
\caption{Decomposing $((x,a),T)$.  Tick marks indicate integers.}
\label{fig:orbit-decomp}
\end{figure}

Recall that $((x,a),T)$ represents the orbit segment of length $T$ starting at $(x,a) \in \hat{X}$.  The collections of orbit segments illustrated in Figure \ref{fig:orbit-decomp} are
\begin{align*}
\hat{\PPP} &= \{((x,a),k+r) \colon k\in \NN,\ a\in [0,1),\ x_0 \cdots x_{k-1} \in \CCC^p, \text{ and } r \in [0,2] \}, \\
\hat{\GGG} &= \{((x,0),n) \colon n\in \NN \text{ and } x_{-2} x_{-1} x_0 \cdots x_n x_{n+1} x_{n+2} \in \GGG \}, \\
\hat{\SSS} &= \{((x,0),k+r) \colon k\in \NN,\ x_j \cdots x_{k-1} \in \CCC^s \text{ for some } j\in \{0,1,2\}, \text{ and } r \in [0,1) \}.
\end{align*}
To decompose $((x,a),T) \in \hat{X} \times [0,\infty)$ with $a\in [0,1)$, let $n = \lfloor a + T \rfloor$ and consider the word $w = x_0 x_1 \cdots x_{n-1} \in \LLL$. Let $w = u^p v u^s$, where $u^p \in \CCC^p$, $v\in \GGG$, and $u^s \in \CCC^s$.

Informally, the decomposition of $((x,a),T)$ is given by removing two symbols from either end of $v\in \GGG$ and calling the resulting orbit segment the $\hat{\GGG}$-part of $((x,a),T)$, then observing that the parts of $((x,a),T)$ that lie before and after this piece are in $\hat\PPP$ and $\hat\SSS$, respectively. More precisely, there are two cases to consider.
\begin{itemize}
\item If $|v| \geq 4$, then let $P = |u^p| + 2 - a$, $G = |v| - 4$, and $S = |u^s| + 2 + a + T - n$.
\item If $|v| < 4$, then let $k = \lfloor |v|/2 \rfloor$ and put $P = |u^p| + k - a$; similarly, put $\ell = \lceil |v|/2 \rceil$ and $S = |u^s| + \ell + a + T - n$. In this case $G=0$.
\end{itemize}
In both cases one obtains $((x,a), P) \in \hat\PPP$, $(f_P(x,a),G) \in \hat\GGG$, and $(f_{P+G}(x,a),S) \in \hat\SSS$.

\subsubsection{Specification}

We prove that $\hat\GGG$ has specification at scale $\delta$. By Condition \ref{spec} of Theorem \ref{thm:CT-PYY}, there is $t \in \NN$ such that for any collection of words $w^1,\dots, w^k \in \GGG$, there are words $u^1,\dots, u^{k-1}\in \LLL_t$ such that $w^1 u^1 w^2 \dots w^{k-1}u^{k-1}w^k \in \LLL$. 

Given any collection of orbit segments $((x_i,0),n_i) \in \hat{\GGG}$ for  $1\leq i \leq k$,
let $w^i = (x_i)_{[-2,n_i + 2]}$, so that $w^i \in \GGG$. Let $u^1,\dots, u^{k-1} \in \LLL_t$ be the words provided by Condition \ref{spec}, so that $w^1 u^1 w^2 \cdots w^{k-1}u^{k-1}w^k \in \LLL$; then fix $y\in [w^1 u^1 w^2 \cdots u^{k-1} w^k]$, and observe that $(\sigma^2 y, 0)$ has the desired shadowing property by Lemma \ref{lem:hat-d-Bn}, with gap size $t+4$.

\subsubsection{Bowen property}\label{sec:Bowen-Prop}

Fix $((x,0),n)\in \hat\GGG$ and $(y,a) \in \bar{B}_n((x,0),\eps)$, where $x,y\in X$ and $|a| \leq \frac 12$. For each $k = 0,1,2,\dots, n-1$, we have $\hat{d}((\sigma^k x, \frac 12), (\sigma^k y, a+ \frac 12)) \leq \eps$, and so Lemma \ref{lem:hat-d-leq} gives $x_0 x_1 \cdots x_n = y_0 y_1 \cdots y_n$.

Observe that $|\Phi_0((y,a),n) - \Phi_0((y,0),n)| \leq 2a\|\ph\| \leq \|\ph\|$. By  Lemma \ref{lem:hat-ph},
\[
|\Phi_0((x,0),n) - \Phi_0((y,0),n)|
\leq |S_n\ph(x) - S_n\ph(y)| + 2\|\ph\|.
\]
We conclude that
\begin{equation}\label{eqn:bowen-bowen}
|\Phi_0((y,a),n) - \Phi_0((x,0),n)| \leq |S_n\ph(x) - S_n\ph(y)| + 3\|\ph\|.
\end{equation}
Since $((x,0),n) \in \hat\GGG$, we have $x_{-2} x_{-1} \cdots x_n x_{n+1} x_{n+2} \in \GGG$. Using \eqref{eqn:bowen-bowen}, we get
\[
|\Phi_0((y,a),n) - \Phi_0((x,0),n)|
\leq |S_{n+4}\ph(\sigma^{-2} x) - S_{n+4}\ph(\sigma^{-2} y)|
+ 11\|\ph\|
\]
and now Condition \ref{Bow} (the Bowen property on $\GGG$) gives
\[
|\Phi_0((y,a),n) - \Phi_0((x,0),n)| \leq C + 11\|\ph\|,
\]
which proves that $\hat\ph$ has the Bowen property on $\hat\GGG$ at scale $\eps$.

\subsubsection{Pressure gap}
We prove the pressure gap condition by way of the following.

\begin{lemma}\label{lem:P-leq}
$P([\hat\SSS],\hat\ph,\delta,\eps) \leq P(\CCC^s,\ph)$ and
$P([\hat\PPP],\hat\ph,\delta,\eps) \leq P(\CCC^p,\ph)$.
\end{lemma}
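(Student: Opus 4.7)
The plan is to bound $\Lambda([\hat\SSS], \hat\ph, \delta, \eps, n)$ in terms of shift partition sums $\Lambda_m(\CCC^s)$ with $m$ close to $n$; taking $\ulim_n \frac{1}{n} \log$ of this bound then yields $P([\hat\SSS], \hat\ph, \delta, \eps) \leq P(\CCC^s, \ph)$, since $m/n \to 1$. The argument for $[\hat\PPP]$ is symmetrical, with prefixes anchored at the start rather than suffixes at the end, so I focus on the suffix case.

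First I would unpack the definition of $[\hat\SSS]$ to attach a word $w(x,a) \in \CCC^s$ to each $((x,a),n) \in [\hat\SSS]$. Writing $(x,a)$ in canonical form with $a\in [0,1)$, the requirement that $f_{-a'}(x,a)$ land on the height-$0$ fiber of $\hat X$ forces $a' = a$, and then the $\hat\SSS$ condition gives $j(x,a) \in \{0,1,2\}$ and $k(x,a) \in \{n, n+1\}$ such that $w(x,a) := x_j x_{j+1} \cdots x_{k-1} \in \CCC^s$. This is a word of length $m := k-j \in \{n-2, n-1, n, n+1\}$, and the at most six choices of $(j,k)$ will be absorbed into a prefactor.

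Next I would establish the weight bound $\Phi_\eps((x,a), n) \leq \Phi(w(x,a)) + C$ for a constant $C$ depending only on $\|\ph\|$. For any $(y,b) \in \bar B_n((x,a),\eps)$, applying Lemma \ref{lem:hat-d-leq} at times when $f_s(y,b)$ is near height $\tfrac12$ forces $y_i = x_i$ for all $i$ in a range containing $\{j,\ldots,k-1\}$, so that $\sigma^j y \in [w(x,a)]^+$. A computation modelled on \eqref{eqn:int-hat-ph} then gives
\[
\int_0^n \hat\ph(f_s(y,b))\,ds \leq S_{k-j}\ph(\sigma^j y) + O(\|\ph\|),
\]
where the $O(\|\ph\|)$ absorbs fractional-time contributions from $a$ and $b$ together with the at most two integer steps of slack on each side. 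Supremizing over $(y,b)$ gives the claimed bound.

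The main obstacle is the multiplicity bound for the map $\Psi\colon (x,a) \mapsto (j(x,a), k(x,a), w(x,a))$ restricted to an $(n,\delta)$-separated set $E$. Two elements $(x,a), (x',a') \in \Psi^{-1}(j_0, k_0, w_0) \cap E$ agree on positions $[j_0, k_0)$ of the symbol sequence; the remaining degrees of freedom are the symbols at positions outside this window but inside the range $\{-1, 0, \ldots, n+1\}$ actually seen by the $(n,\delta)$-Bowen metric (using $\eps < \theta$), together with the height $a$. The first contributes at most $|A|^5$ choices, while \eqref{eqn:dxya} combined with the symbol agreement forces pairs within $E$ sharing the same boundary symbols to have heights $\delta$-separated, giving at most $\lceil 1/\delta\rceil$ such heights. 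Hence each fiber of $\Psi$ meets $E$ in at most $|A|^5 \lceil 1/\delta \rceil$ points, a constant in $n$. Combining with the weight bound from the previous paragraph and summing over admissible words yields
\[
\Lambda([\hat\SSS], \hat\ph, \delta, \eps, n) \leq C_1 \sum_{m \in \{n-2, n-1, n, n+1\}} \Lambda_m(\CCC^s),
\]
and taking $\ulim_n \frac{1}{n} \log$ completes the proof. The subtlest step will be the metric analysis near the height-$0$ fibers, where the boundary symbols outside the cylinder window contribute and one must verify that they do not break the $\delta$-separation argument.
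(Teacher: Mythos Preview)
Your overall plan mirrors the paper's proof closely: attach a word $w=x_{[j,k)}\in\CCC^s$ (with $j\in\{0,1,2\}$, $k\in\{n,n+1\}$) to each $((x,a),n)\in[\hat\SSS]$, bound $\Phi_\eps((x,a),n)$ in terms of $\Phi(w)$, then control the multiplicity of the map $(x,a)\mapsto w$ on $(n,\delta)$-separated sets. The multiplicity step is essentially the paper's Lemma~\ref{lem:delta-sep}; your constants differ slightly because Lemma~\ref{lem:hat-d-Bn} (with $\xi\in(\theta^3,\delta)$) controls positions in $[-2,n+2]$ rather than $\{-1,\dots,n+1\}$, giving $(\#A)^7$ rather than $|A|^5$, but this is harmless bookkeeping.

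The gap is in your weight bound. You assert that the Bowen-ball condition forces $y_i=x_i$ on a range containing $\{j,\dots,k-1\}$, so that $\sigma^j y\in[w]^+$ and hence $S_{k-j}\ph(\sigma^j y)\leq\Phi(w)$. But arguing as the paper does (reducing via $(\sigma y,b-a)\in B_{n-1}((\sigma x,0),\eps)$ and then invoking Lemma~\ref{lem:hat-d-leq} at half-integer heights), one only obtains $y_{[1,n)}=x_{[1,n)}$. When $j=0$ there is no guarantee that $y_0=x_0$, so $y\notin[w]^+$ in general, and since $\Phi$ of a proper subword can exceed $\Phi$ of the containing word by an amount not controlled by $\|\ph\|$ alone, your $O(\|\ph\|)$ bound is not available. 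The paper circumvents this by comparing $y$ to $x$ rather than to an abstract point of $[w]^+$: one always has $\sigma^j x\in[w]^+$, hence $S_{k-j}\ph(\sigma^j x)\leq\Phi(w)$, and then
\[
|S_{k-j}\ph(\sigma^j y)-S_{k-j}\ph(\sigma^j x)|\leq \sum_{i=0}^{k-j-1}V_{\min(1+i,\,k-j-i)}(\ph)\leq 2A_{k-j},
\]
where $A_N=\sum_{i<N}V_i(\ph)=o(N)$ by continuity of $\ph$. The resulting estimate $\Phi_\eps((x,a),n)\leq 7\|\ph\|+\Phi(w)+2A_{n+1}$ carries an $o(n)$ correction in place of your claimed constant, but this still vanishes after dividing by $n$ and taking $\varlimsup$, so the pressure inequality follows.
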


Given Lemma \ref{lem:P-leq}, we have
$P([\hat\PPP] \cup [\hat\SSS],\hat\ph,\delta\eps)
\leq P(\CCC^p \cup \CCC^s,\ph)
< P(\ph) = P(\hat\ph)$,
where the second inequality is condition \ref{gap} of Theorem \ref{thm:CT-PYY}.
The rest of this section is devoted to the proof of Lemma \ref{lem:P-leq} for $[\hat\SSS]$; the proof for $[\hat\PPP]$ is similar.

Given $((x,a),n) \in [\hat\SSS]$ with $a\in [0,1]$, we have $u := x_j \cdots x_{k-1} \in \CCC^s$ for some $j\in \{0,1,2\}$ and $k\in \{n,n+1\}$. If $(y,b) \in B_n((x,a),\eps)$ and $|b-a|\leq \frac 12$, then $(\sigma y,b-a) \in B_{n-1}((\sigma x,0),\eps)$, and arguing as in \S\ref{sec:Bowen-Prop} gives $y_{[1,n)} = x_{[1,n)}$. 
Moreover, $b\in [-\frac 12, \frac 32]$, so using Lemma \ref{lem:hat-ph} we get
\begin{equation}\label{eqn:int-0k}
\Phi_0((y,b),n) \leq 3\|\ph\| + \Phi_0((y,0),n) \leq 4\|\ph\| + S_n\ph(y).
\end{equation}
Since $S_n\ph(y) + S_{k-n}\ph(\sigma^n y) = S_j \ph(y) + S_{k-j}\ph(\sigma^j y)$, we have
\[
|S_n\ph(y) - S_{k-j} \ph(\sigma^j y)| \leq (j + (k-n))\|\ph\| \leq 3\|\ph\|,
\]
and with $V_n = V_n(\ph)$ as in \S\ref{sec:classes}, we can write $A_N := \sum_{i=0}^{N-1} V_i = o(N)$ and obtain
\[
|S_{k-j} \ph(\sigma^j y) - S_{k-j} \ph(\sigma^j x)|
\leq \sum_{i=0}^{k-j-1} V_{\min(1+i,k-j-i)} \leq 2 A_{k-j}.
\]
Combining these gives
\[
S_n\ph(y) \leq S_{k-j} \ph(\sigma^j x) + 3\|\ph\| + 2 A_{k-j}
\leq \Phi(u) + 3\|\ph\| + 2A_{n+1},
\]
and together with \eqref{eqn:int-0k} we obtain
\begin{equation}\label{eqn:Phi-eps}
\Phi_\eps((x,a),n) \leq 7\|\ph\| + \Phi(u) + 2A_{n+1}.
\end{equation}
This will let us bound the partition sum associated to an $(n,\delta)$-separated set $E\subset \hat{X}$ once we control the multiplicity of the map $((x,a),n) \mapsto u$.

\begin{lemma}\label{lem:delta-sep}
There exists $D>0$ such that given any $n\in \NN$ and any $(n,\delta)$-separated set $E\subset \hat{X}$, we have 
$\#\{ (x,a) \in E : a\in [0,1], x_{[j,k)} = u \} \leq D$
for each $j\in \{0,1,2\}$, and $k\in \{n,n+1\}$.
\end{lemma}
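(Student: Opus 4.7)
The plan is to exploit the vertical structure of the suspension flow together with the observation that, for any two points $(x,a), (y,b) \in \hat X$ satisfying $x_{[j,k)} = y_{[j,k)} = u$, the orbits agree symbolically on almost the entire time interval $[0,n]$. The data not fixed by the word $u$ are (i) the symbols $x_\ell$ for $\ell \in \{-2,\dots,j-1\} \cup \{k,\dots,n+3\}$, a set of cardinality at most $(j+2)+(n+4-k)\leq 8$ since $j\leq 2$ and $k\geq n$, together with (ii) the continuous height $a \in [0,1)$.

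First, I would establish the following strengthening of Lemma~\ref{lem:hat-d-Bn}: choose $\eta>0$ small enough that $\eta + \theta^3 < \delta$ (possible since $\delta = \eps\theta > \theta^3$ because $\eps > \theta^2$), then whenever $(x,a),(y,b) \in \hat X$ satisfy $|a-b|<\eta$ and $x_\ell = y_\ell$ for $-2\leq \ell\leq n+3$, we have $(y,b) \in \bar B_n((x,a),\delta)$. Indeed, the triangle inequality gives
\[
\hat d(f_s(x,a),f_s(y,b)) \leq \hat d(f_s(x,a),f_s(x,b)) + \hat d(f_s(x,b),f_s(y,b));
\]
the first term is bounded by $|a-b|<\eta$ because the two points lie on the same orbit separated by flow-time $|a-b|$ (a standard feature of the Bowen--Walters metric on a constant-roof suspension), and the argument of Lemma~\ref{lem:hat-d-Bn} combined with \eqref{eqn:dxya} bounds the second term by $\theta^3$ under our hypothesis on the symbols, since for every $s \in [0,n]$ the integer part $m = \lfloor b+s\rfloor$ lies in $[0,n]$ and the required agreement of $\sigma^m x$ with $\sigma^m y$ and of $\sigma^{m+1} x$ with $\sigma^{m+1}y$ at coordinates $|\ell|\leq 2$ follows from our hypothesis.

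Second, partition $[0,1)$ into $N := \lceil 1/\eta \rceil$ half-open sub-intervals of length at most $\eta$. For fixed $(j,k,u)$, define an equivalence relation on $\hat X$: declare $(x,a)\sim (y,b)$ when $x_\ell = y_\ell$ for every $\ell \in \{-2,\dots,j-1\}\cup\{k,\dots,n+3\}$ and $a,b$ lie in the same sub-interval. There are at most $(\#A)^8 \cdot N$ equivalence classes. By the first step, any two representatives in the same class are mutually within Bowen $(n,\delta)$-distance, contradicting the $(n,\delta)$-separation of $E$. Hence each class meets $E$ in at most one point, yielding $D = (\#A)^8 N$, a constant depending only on $\#A$, $\theta$, and $\delta$ --- in particular independent of $n$, $j$, $k$, and $u$.

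The main technical obstacle is the same-orbit Lipschitz bound $\hat d(f_s(x,a),f_s(x,b)) \leq |a-b|$; this is a property of the Bowen--Walters construction (arising because the vertical flow moves at unit speed with respect to $\hat d$) but is not among the explicit properties \eqref{eqn:dxya}--\eqref{eqn:dxy12} quoted in the excerpt, so it needs to be either cited or verified directly from the construction. Once established, the rest of the argument is a routine pigeonhole count.
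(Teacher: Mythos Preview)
Your proposal is correct and follows essentially the same route as the paper's proof: both arguments combine the symbol-agreement estimate from Lemma~\ref{lem:hat-d-Bn} with the Bowen--Walters same-orbit Lipschitz bound to show that points agreeing on positions $[-2,n+3]$ with nearby heights are $(n,\delta)$-close, then count the free symbol positions times a discretization of the height interval. The paper states the conclusion as ``at most $1/(\delta-\xi)$ points per symbol pattern'' (with $\xi\in(\theta^3,\delta)$) rather than partitioning $[0,1)$ explicitly, and it also uses the same-orbit Lipschitz bound implicitly in the step ``thus $(y,b)\in B_n((x,a),\xi+|b-a|)$'' without separate justification---so your flagging of this as the main technical point is apt.
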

\begin{proof}
Fix $\xi \in (\theta^3, \delta)$.
If $(x,a),(y,b) \in E$ are distinct and have $x_{[-2, n+3]} = y_{[-2, n+3]}$, then Lemma \ref{lem:hat-d-Bn} gives $(y,0) \in B_{n+1}((x,0),\xi)$, and thus $(y,b) \in B_n((x,a),\xi + |b-a|)$;  this implies $\xi + |b-a| \geq \delta$, so $|b-a| \geq \delta - \xi$. It follows that $E$ can contain at most $1/(\delta-\xi)$ points for each choice of the symbols in positions $-2,-1,\dots, n+3$. Given $j,k$, if $x_{[j,k)} = u = y_{[j,k)}$ then the only symbols where we could possibly have $x_i \neq y_i$ occur when $i \in \{-2,-1,0,1,n+1,n+2,n+3\}$. There are at most $(\#A)^7$ ways of filling these symbols, which proves the lemma with $D = (\#A)^7/(\delta-\xi)$.
\end{proof}

Using Lemma \ref{lem:delta-sep} and \eqref{eqn:Phi-eps} gives
\[
\Lambda([\hat\SSS],\hat\ph,\delta,\eps,n)
\leq \sum_{j=0}^2 \sum_{k=n}^{n+1} D e^{7\|\ph\| + 2A_{n+1}} \Lambda_{k-j}(\CCC^s),
\]
and since $A_{n+1} = o(n)$ this proves Lemma \ref{lem:P-leq}.

\bibliographystyle{amsalpha}
\bibliography{es-formula-ref}

\end{document}